\newcommand*{\@old@slash}{}\let\@old@slash\slash
\def\slash{\relax\ifmmode\delimiter"502F30E\mathopen{}\else\@old@slash\fi}
\titleformat{\section}{\normalsize\bfseries}{\thesection}{1em}{}
\titleformat{\subsection}{\normalsize\bfseries}{\thesubsection}{1em}{}
\numberwithin{equation}{subsection}
\theoremstyle{plain}
\newtheorem{PropSub}[subsection]{Proposition}
\newtheorem{LemSub}[subsection]{Lemma}
\newtheorem{CorSub}[subsection]{Corollary}
\newtheorem{ThmSub}[subsection]{Theorem}
\theoremstyle{definition}
\newtheorem{DefSub}[subsection]{Definition}
\newtheorem{ExaSub}[subsection]{Example}
\newtheorem{RemSub}[subsection]{Remark}
\newtheorem{ParSub}[subsection]{}
\newtheorem{NotnSub}[subsection]{Notation}
\newtheorem*{Acknowledgement}{Acknowledgement}
\newcommand*{\emptybox}{\leavevmode\hbox{}}
\DeclareMathAlphabet{\mathpzc}{OT1}{pzc}{m}{it}
\DeclareMathAlphabet{\mathcalligra}{T1}{calligra}{m}{n}
\newcommand{\bref}[1]{\textnormal{\ref{#1}}}
\newcommand{\pbref}[1]{\textnormal{(\ref{#1})}}
\newcommand{\A}{\ensuremath{\mathscr{A}}}
\newcommand{\C}{\ensuremath{\mathscr{C}}}
\newcommand{\normalJ}{\ensuremath{\mathscr{J}}}
\newcommand{\J}{\ensuremath{{\kern -0.4ex \mathscr{J}}}}
\newcommand{\JF}{\ensuremath{{\kern -0.4ex \mathscr{J}_{\kern -0.05ex F}}}}
\newcommand{\sP}{\ensuremath{\mathscr{P}}}
\newcommand{\sS}{\ensuremath{\mathscr{S}}}
\newcommand{\T}{\ensuremath{\mathscr{T}}}
\newcommand{\U}{\ensuremath{\mathscr{U}}}
\newcommand{\V}{\ensuremath{\mathscr{V}}}
\newcommand{\CAT}{\ensuremath{\operatorname{\textnormal{\text{CAT}}}}}
\newcommand{\VCAT}{\ensuremath{\V\textnormal{-\text{CAT}}}}
\newcommand{\NN}{\ensuremath{\mathbb{N}}}
\newcommand{\QQ}{\ensuremath{\mathbb{Q}}}
\newcommand{\RR}{\ensuremath{\mathbb{R}}}
\newcommand{\TT}{\ensuremath{\mathbb{T}}}
\newcommand{\ZZ}{\ensuremath{\mathbb{Z}}}
\newcommand{\ob}{\ensuremath{\operatorname{\textnormal{\textsf{ob}}}}}
\newcommand{\mor}{\ensuremath{\operatorname{\textnormal{\textsf{mor}}}}}
\newcommand{\stt}{\mathbin{\tilde{*}}}
\newcommand{\ca}[1]{\scalebox{0.85}{\raisebox{0.3ex}{$|$}} #1\scalebox{0.85}{\raisebox{0.3ex}{$|$}}}
\newcommand{\Th}{\ensuremath{\textnormal{Th}}}
\newcommand{\ThJ}{\ensuremath{\Th_{\kern -0.5ex \normalJ}}}
\newcommand{\SubTh}{\ensuremath{\textnormal{SubTh}}}
\newcommand{\Vfp}{\ensuremath{\V_{\kern -0.5ex fp}}}
\newcommand{\Ev}{\ensuremath{\textnormal{\textsf{Ev}}}}
\newcommand{\inJ}[1]{#1 \in \normalJ\kern -1.2ex}
\newcommand{\VCATJ}{\VCAT_{\kern -0.7ex \normalJ}}
\newcommand{\PhiJ}{\Phi_{\kern -0.8ex \normalJ}}
\newcommand{\MndJ}{\Mnd_{\kern -0.8ex \normalJ}}
\newcommand{\Set}{\ensuremath{\operatorname{\textnormal{\text{Set}}}}}
\newcommand{\FinCard}{\ensuremath{\operatorname{\textnormal{\text{FinCard}}}}}
\newcommand{\Ord}{\ensuremath{\operatorname{\textnormal{\text{Ord}}}}}
\newcommand{\Bool}{\ensuremath{\operatorname{\textnormal{\text{Bool}}}}}
\newcommand{\SLat}{\ensuremath{\operatorname{\textnormal{\text{SLat}}}}}
\newcommand{\USLat}{\ensuremath{\operatorname{\textnormal{\text{USLat}}}}}
\newcommand{\Ab}{\ensuremath{\operatorname{\textnormal{\text{Ab}}}}}
\newcommand{\Ring}{\ensuremath{\operatorname{\textnormal{\text{Ring}}}}}
\newcommand{\Mod}[1]{\ensuremath{#1\textnormal{-\text{Mod}}}}
\newcommand{\Aff}[1]{\ensuremath{#1\textnormal{-\text{Aff}}}}
\newcommand{\Cvx}[1]{\ensuremath{#1\textnormal{-\text{Cvx}}}}
\newcommand{\Mat}{\ensuremath{\textnormal{\text{Mat}}}}
\newcommand{\Mnd}{\ensuremath{\operatorname{\textnormal{\text{Mnd}}}}}
\newcommand{\CMon}{\ensuremath{\operatorname{\textnormal{\text{CMon}}}}}
\newcommand{\Alg}[1]{\ensuremath{#1\kern -.5ex\operatorname{\textnormal{-\text{Alg}}}}}
\newcommand{\CAlg}[1]{\ensuremath{#1\kern -.5ex\operatorname{\textnormal{-\text{CAlg}}}}}
\newcommand{\CRProf}{\ensuremath{\operatorname{\textnormal{\text{CRProf}}}}}
\newcommand{\CRProfJ}{\ensuremath{\CRProf_{\kern -0.6ex \normalJ}}}
\newcommand{\Adjn}[6]{\xymatrix {#1 \ar@/_0.5pc/[rr]_{#2}^(0.4){#4}^(0.6){#5}^{\top} & & #6 \ar@/_0.5pc/[ll]_{#3}}}
\newcommand{\Equiv}[6]{\xymatrix {#1 \ar@/_0.5pc/[rr]_{#2}^(0.4){#4}^(0.6){#5}^{\sim} & & #6 \ar@/_0.5pc/[ll]_{#3}}}
\newcommand{\lt}{\leqslant}
\newcommand{\gt}{\geqslant}
\newcommand{\op}{\ensuremath{\textnormal{op}}}
\newcommand{\aff}{\ensuremath{\textnormal{aff}}}
\newcommand{\pushoutcorner}{\ar@{}[dr]|(.3)\ulcorner}
\newcommand{\pullbackcorner}{\ar@{}[dr]|(.3)\lrcorner}
\newcommand{\cmt}[1]{}
\begin{document}

\author{\normalsize  Rory B. B. Lucyshyn-Wright\thanks{The author gratefully acknowledges financial support in the form of an AARMS Postdoctoral Fellowship, a Mount Allison University  Research Stipend, and, earlier, an NSERC Postdoctoral Fellowship.}\let\thefootnote\relax\footnote{Keywords: convex space; convex module; affine space; affine module; commutant; centralizer clone; commutation; algebraic theory; Lawvere theory; universal algebra; ring; rig; semiring; preordered ring; ordered ring; module; monad; commutative theory; semilattice; matrix; Kronecker product}\footnote{2010 Mathematics Subject Classification: 18C10, 18C15, 18C20, 18C05, 08A62, 08B99, 08C05, 52A01, 52A05, 51N10, 16B50, 16B70, 16D10, 16D90, 16W80, 13J25, 13C99, 15A27, 15A69, 15A99, 15B51, 06A11, 06A12, 06F25}
\\
\small Mount Allison University, Sackville, New Brunswick, Canada}

\title{\large \textbf{Convex spaces, affine spaces, and \\commutants for algebraic theories}}

\date{}

\maketitle

\abstract{Certain axiomatic notions of \textit{affine space} over a ring and \textit{convex space} over a preordered ring are examples of the notion of $\T$-algebra for an algebraic theory $\T$ in the sense of Lawvere.  Herein we study the notion of \textit{commutant} for Lawvere theories that was defined by Wraith and generalizes the notion of \textit{centralizer clone}.  We focus on the Lawvere theory of \textit{left $R$-affine spaces} for a ring or rig $R$, proving that this theory can be described as a commutant of the theory of pointed right $R$-modules.  Further, we show that for a wide class of rigs $R$ that includes all rings, these theories are commutants of one another in the full finitary theory of $R$ in the category of sets.  We define \textit{left $R$-convex spaces} for a preordered ring $R$ as left affine spaces over the positive part $R_+$ of $R$.  We show that for any \textit{firmly archimedean} preordered algebra $R$ over the dyadic rationals, the theories of left $R$-convex spaces and pointed right $R_+$-modules are commutants of one another within the full finitary theory of $R_+$ in the category of sets.  Applied to the ring of real numbers $\RR$, this result shows that the connection between convex spaces and pointed $\RR_+$-modules that is implicit in the integral representation of probability measures is a perfect `duality' of algebraic theories.}

\section{Introduction} \label{sec:intro}

In 1963, Lawvere \cite{Law:PhD} introduced an elegant approach to Birkhoff's universal algebra through category theory.  Therein, an \textit{algebraic theory} or \textit{Lawvere theory} is by definition a category $\T$ with a denumerable set of objects $T^0,T^1,T^2,...$ in which $T^n$ is an $n$-th power of the object $T = T^1$, and a \textit{$\T$-algebra} is a functor $A:\T \rightarrow \Set$ that is valued in the category of sets and preserves finite powers.  We call $\ca{A} = A(T)$ the \textit{carrier} of $A$, and we say that $A$ is \textit{normal} if $A$ sends the powers $T^n$ in $\T$ to the canonical $n$-th powers $\ca{A}^n$ in $\Set$ \pbref{par:talgs}.  $\T$-algebras and the natural transformations between them constitute a category $\Alg{\T}$ with an equivalent full subcategory consisting of all normal $\T$-algebras \pbref{par:cat_talgs}.  The morphisms $\omega:T^n \rightarrow T$ in a Lawvere theory $\T$ may be called \textit{abstract operations}, and the mappings $A(\omega):\ca{A}^n \rightarrow \ca{A}$ associated to these by a given $\T$-algebra $A$ are then called \textit{concrete operations}.  For convenience, we can take the objects $T^n$ of $\T$ to be just the finite cardinals $n$ to which they correspond bijectively.

By a \textit{(normal) faithful representation} of a Lawvere theory $\T$ we mean a normal $\T$-algebra $R:\T \rightarrow \Set$ that is faithful as a functor.  Writing simply $R$ for the carrier of $R$, such a faithful representation presents $\T$ as a subtheory $\T \hookrightarrow \Set_R$ of a larger theory $\Set_R$ called the \textit{full finitary theory of $R$ in $\Set$}, consisting of \textit{all} the mappings between the $n$-th powers $R^n$ of the set $R$.  Such subtheories are essentially the \textit{concrete clones} that appear in universal algebra, as contrasted with the (a priori) more general \textit{abstract clones} of Hall, which correspond to arbitrary Lawvere theories.

One of the chief objectives of this paper is to study a phenomenon sometimes exhibited by a faithfully represented Lawvere theory $\T \hookrightarrow \Set_R$, wherein the set $R$ carries the structure of an $\sS$-algebra for some other Lawvere theory $\sS$ and the mappings $R^n \rightarrow R^m$ that lie within the subtheory $\T \hookrightarrow \Set_R$ are precisely those that are $\sS$-\textit{homomorphisms} with respect to the induced $\sS$-algebra structures on $R^n$ and $R^m$, so that
$$\T(n,m) \cong \Alg{\sS}(R^n,R^m)\;.$$
We will in fact encounter situations in which, moreover, the $\sS$-algebra structure on $R$ is also a faithful representation of $\sS$ with the same property, such that the subtheory $\sS \hookrightarrow \Set_R$ consists of exactly those mappings $R^n \rightarrow R^m$ that are $\T$-homomorphisms.  In symbols
$$\sS(n,m) \cong \Alg{\T}(R^n,R^m)\;.$$
It is precisely this curious `duality' of certain pairs of theories $\T$ and $\sS$ that we seek to understand.

As a first example, let us consider the theories $\T$ and $\sS$ of left and right $R$-modules, respectively, for a given ring\footnote{Throughout this paper, we use the term \textit{ring} to mean \textit{unital ring}.  A similar remark applies to the notion of \textit{rig} or \textit{semiring} employed herein, whose definition we recall in \bref{exa:law_th_rmods}.} $R$ (or even just a \textit{rig} or \textit{semiring}, \bref{exa:law_th_rmods}).  Concretely, $\T$ is the category $\Mat_R$  whose objects are the natural numbers $n$ and whose morphisms $n \rightarrow m$ are $m \times n$-matrices with entries in $R$, with composition given by matrix multiplication.  The category of normal $\T$-algebras is isomorphic to the category $\Mod{R}$ of left $R$-modules \pbref{exa:law_th_rmods}.  Similarly, $\sS$-algebras for $\sS = \Mat_{R^\op}$ are right $R$-modules.  Given a left $R$-module $A$, the corresponding normal $\T$-algebra $\T \rightarrow \Set$ is given on objects by $n \mapsto A^n$ and associates to each $m \times n$-matrix $w \in R^{m \times n}$ the mapping  $A^n \rightarrow A^m$ that sends a column vector $x \in A^n$ to the matrix product $wx \in A^m$.  A normal $\T$-algebra is uniquely determined by its carrier and its values on morphisms of the form $w:n \rightarrow 1$ in $\T$, i.e. on row vectors $w \in R^{1 \times n}$, for which the associated maps
\begin{equation}\label{eq:linear_comb_ops}A^n \rightarrow A\;,\;\;\;\;\;x \mapsto wx = \sum_{i = 1}^nw_ix_i\end{equation}
implement the taking of left $R$-linear combinations.  In particular, $R$ itself is a left $R$-module and so determines a normal $\T$-algebra $R:\T \rightarrow \Set$ that is in fact a faithful representation.  Similarly, $R$ is a right $R$-module, so we have faithful representations
$$\T \hookrightarrow \Set_R\;\;\;\;\;\;\;\;\sS \hookrightarrow \Set_R\;.$$
Thus viewing $\T$ as a subtheory of $\Set_R$, we find that the mappings $R^n \rightarrow R^m$ that lie in $\T$ are precisely the \textit{right} $R$-linear maps (i.e. the $\sS$-homomorphisms) whereas the mappings $R^n \rightarrow R^m$ in $\sS$ are precisely the \textit{left} $R$-linear maps (i.e. the $\T$-homomorphisms) \pbref{thm:th_lr_rmods_mutual_cmtnts}.

This peculiar duality of pairs of theories $\T,\sS$ can be understood through the notion of \textit{commutant} for Lawvere theories that was briefly introduced by Wraith in his lecture notes on algebraic theories \cite{Wra:AlgTh} but was not studied to any substantial extent therein.  Given a set $R$, a pair of mappings $\mu:R^n \rightarrow R^m$ and $\nu:R^{n'} \rightarrow R^{m'}$ is said to \textit{commute} if the associated mappings
$$\mu * \nu = \left(R^{n \times n'} \cong (R^n)^{n'} \xrightarrow{\mu^{n'}} (R^m)^{n'} \cong (R^{n'})^m \xrightarrow{\nu^m} (R^{m'})^m \cong R^{m \times m'}\right)$$
$$\mu \stt \nu = \left(R^{n \times n'} \cong (R^{n'})^n \xrightarrow{\nu^n} (R^{m'})^n \cong (R^n)^{m'} \xrightarrow{\mu^{m'}} (R^{m})^{m'} \cong R^{m \times m'}\right)$$
are equal.  Given a subtheory $\T \hookrightarrow \Set_R$, the \textit{commutant} of $\T$ in $\Set_R$ is, by definition, the subtheory $\T^\perp \hookrightarrow \Set_R$ consisting of those mappings $\mu:R^n \rightarrow R^m$ that commute with every mapping $\nu:R^{n'} \rightarrow R^{m'}$ in $\T$.  A subtheory $\T \hookrightarrow \Set_R$ is equivalently a theory $\T$ admitting a faithful representation with carrier $R$, and the key observation is now that a mapping $\mu:R^n \rightarrow R^n$ lies in the commutant $\T^\perp$ if and only if $\mu$ is a $\T$-homomorphism \pbref{thm:cmtnt_full_th_alg}; i.e.,
$$\T^\perp(n,m) = \Alg{\T}(R^n,R^m)\;.$$
Hence the `duality' observed above in pairs of faithfully represented theories $\T,\sS$ is equivalently the statement that $\sS$ and $\T$ are commutants of one another within the full finitary theory $\Set_R$ of a set $R$; in symbols,
$$\T \cong \sS^\perp\;\;\;\;\;\;\;\T^\perp \cong \sS\;.$$
In particular, given a ring or rig $R$, the theories $\Mat_R$ and $\Mat_{R^\op}$ of left and right $R$-modules, respectively, are commutants of one another within $\Set_R$:
$$\Mat_R \cong (\Mat_{R^\op})^\perp\;\;\;\;\;\;(\Mat_R)^\perp \cong \Mat_{R^\op}\;.$$

Wraith's notion of commutant applies not only to subtheories $\T$ of the full finitary theory $\Set_R$ of a set $R$ but also to subtheories $\T \hookrightarrow \U$ of an arbitrary Lawvere theory $\U$.  Indeed, in analogy with the above one can again define the notion of commutation of morphisms in $\U$ \pbref{def:kps_cmt}, and the \textit{commutant}
$$\T^\perp \hookrightarrow \U$$
of $\T$ in $\U$ is then defined in the analogous way \pbref{def:cmtnt}.  More generally, we can define the commutant $\T^\perp_A \hookrightarrow \U$ of a morphism of Lawvere theories $A:\T \rightarrow \U$ as the commutant of its image.  The commutant is then characterized by a universal property, namely that a morphism of theories $B:\sS \rightarrow \U$ factors uniquely through the commutant $\T^\perp_A \hookrightarrow \U$ if and only if $A$ \textit{commutes} with $B$ in a suitable sense \pbref{def:cmtn_mor_th}.  Defining a \textit{theory over $\U$} as a theory $\T$ equipped with a morphism $A:\T \rightarrow \U$, it follows that the operation $(-)^\perp$ on theories over $\U$ gives rise to an adjunction between the category of theories over $\U$ and its opposite \pbref{thm:cmtnt_adjn}, and this adjunction restricts to a Galois connection on subtheories of $\U$.  Since a normal $\T$-algebra $R:\T \rightarrow \Set$ is equivalently described as a morphism $R:\T \rightarrow \Set_R$ into the full finitary theory of its carrier $R$, we recover the commutant of a faithful representation as a special case.  In particular, when $\T$ is the subtheory of $\Set_R$ generated by a specified family of finitary operations on a given set $R$, we recover the notion of \textit{centralizer clone} that has been studied to some extent in the literature on universal algebra.  For example, the paper \cite{TrnSich} characterizes those abstract clones or Lawvere theories $\T$ for which there exists a set $R$ equipped with a family of operations whose centralizer clone is isomorphic to $\T$.

In addition to a general study of the notion of commutant for Lawvere theories\footnote{A further recent preprint \cite{Lu:Cmtnts} by the author of the present paper was made available subsequent to the initial version of the present paper and treats the general theory of commutants for $\V$-\textit{enriched} algebraic theories for a system of arities $\J \hookrightarrow \V$.  The development is much simpler in the present $\Set$-based case, and the present paper is principally concerned with specific examples.}, the present paper comprises an in-depth study of certain specific examples of commutants.  In particular, we prove several theorems concerning the theory of \textit{$R$-affine spaces} for a ring or rig $R$ and, in particular, \textit{$R$-convex spaces} for a preordered ring $R$.  Several authors have studied axiomatic notions of affine space over a ring or rig, and the generality afforded by the use of a mere rig permits the consideration of the notion of convex space as a special case; for example, see \cite{ParRoeh} and the references there.  Whereas a (left) $R$-module $A$ is a set equipped with operations \eqref{eq:linear_comb_ops} that permit the taking of linear combinations, a \textit{(left) $R$-affine space} or \textit{(left) $R$-affine module} is a set equipped with operations \eqref{eq:linear_comb_ops} that permit the taking of \textit{affine combinations}, i.e. those linear combinations $\sum_{i = 1}^nw_ix_i$ whose coefficients $w_i$ sum to $1$.  More precisely, a left $R$-affine space is by definition a normal $\T$-algebra for a certain subtheory 
$$\T = \Mat_R^\aff \hookrightarrow \Mat_R$$
of the category of $R$-matrices, namely the subtheory consisting of all matrices in which each row sums to $1$.  This way of defining the notion of $R$-affine space was given in \cite{Law:ProbsAlgTh}.  Letting $\RR_+$ denote the rig of non-negative reals, $\RR_+$-affine spaces are usually called \textit{convex spaces}, and $\RR_+$-affine combinations are called \textit{convex combinations}.  This way of defining convex spaces was given in \cite{Meng}.

We pursue answers to the following questions:
\begin{enumerate}
\item Does $\Mat_R^\aff$ arise as a commutant of some theory over the full finitary theory $\Set_R$ of $R$ in $\Set$?
\item What is the commutant of $\Mat_R^\aff$ in $\Set_R$?
\end{enumerate}
We answer 1 in the affirmative for every rig $R$.  Indeed, defining a \textit{pointed right $R$-module} as a right $R$-module $M$ equipped with a chosen element $* \in M$, the category of pointed right $R$-modules is isomorphic to the category of $\T$-algebras for a Lawvere theory $\T = \Mat_{R^\op}^*$.  The pointed right $R$-module $(R,1)$ determines a morphism $\Mat_{R^\op}^* \rightarrow \Set_R$ by which $\Mat_{R^\op}^*$ can be considered as a theory over $\Set_R$, though not in general a subtheory, and we show in \bref{thm:taffr_as_cmtnt} that
$$\Mat_R^\aff \cong (\Mat_{R^\op}^*)^\perp$$
as theories over $\Set_R$.  Hence
$$
\begin{minipage}{4.2in}
\textit{the Lawvere theory of left $R$-affine spaces is the commutant of the theory of pointed right $R$-modules when both are considered as theories over the full finitary theory of $R$ in $\Set$.}
\end{minipage}
$$
Consequently $\Mat_R^\aff$ is its own double-commutant $(\Mat_R^\aff)^{\perp\perp}$ over $\Set_R$, so we say that $\Mat_R^\aff$ is a \textit{saturated} subtheory of $\Set_R$.

It is illustrative to note that in the case of the rig $\RR_+$ one finds here a connection to the Kakutani-Markov-Riesz representation theorem, since for each finite cardinal $n$ the resulting bijection $\Mat_{\RR_+}^\aff(n,1) \cong (\Mat_{\RR_+}^*)^\perp(n,1)$ is the correspondence between probability measures on the finite set $n$ (on the left-hand side) and $1$-preserving $\RR_+$-linear functionals $\RR^n_+ \rightarrow \RR_+$ (on the right).

With regard to question 2 it is natural to ask also whether $\Mat^*_{R^\op}$ is the commutant of $\Mat_R^\aff$ over $\Set_R$.  When $R$ is a \textit{ring} we show that this is indeed the case \pbref{thm:cmtnt_of_th_of_raff_sp}, so that
\begin{equation}\label{eq:matraffperp_is_matropstar}(\Mat_R^\aff)^\perp \cong \Mat_{R^\op}^*\end{equation}
over $\Set_R$.  Hence
$$
\begin{minipage}{4.2in}
\textit{if $R$ is a ring, then the theories of left $R$-affine spaces and pointed right $R$-modules are commutants of one another within the full finitary theory of $R$ in $\Set$.}
\end{minipage}
$$
However for arbitrary \textit{rigs} this is no longer true.  For example, when $R$ is the two-element rig $2 = (2,\vee,0,\wedge,1)$, we show that (i) $2$-modules are equivalently \textit{(bounded) join semilattices} \pbref{exa:slats}, (ii) $2$-affine spaces are \textit{unbounded join semilattices} (i.e., idempotent commutative semigroups, \bref{exa:unb_jslats}), and (iii) the commutant in $\Set_2$ of the theory of unbounded join semilattices is the theory of join semilattices with a \textit{top} element \pbref{thm:th_uslats_slatswtop_cmtnts}.

Nevertheless, we show that \eqref{eq:matraffperp_is_matropstar} \textit{does} hold for many rigs other than rings.  In particular, we show that it holds for the rig $\RR_+$ of non-negative reals \pbref{exa:firmly_arch_pralgs_over_d}, so that
\begin{equation}\label{eq:cmtnt_convex_sp}
\begin{minipage}{4.5in}
\textit{the theory of convex spaces (over $\RR$) and the theory of pointed right $\RR_+$-modules are commutants of one another within the full finitary theory of $\RR_+$ in $\Set$.}
\end{minipage}
\end{equation}
This result shows that the connection between convex spaces and pointed $\RR_+$-modules that is implicit in the integral representation of probability measures is in fact a perfect `duality' of algebraic theories.  Indeed, one of the purposes of the present paper is to provide an algebraic basis for a study of measure and distribution monads canonically determined by such dualities in the enriched context \cite{Lu:CT2015,Lu:FDistn}.

In order to generalize this result, we study affine spaces over rigs of the form 
$$R_+ = \{r \in R \;|\; r \gt 0 \}$$ 
where $R$ is a \textit{preordered ring} \pbref{par:pr_rings}.  Preordered and partially ordered rings have been studied at various levels of generality in the literature on ordered algebra, and they can be defined equivalently as rings $R$ equipped with an arbitrary subrig $R_+ \hookrightarrow R$.  The rigs that occur as the \textit{positive part} $R_+$ of some preordered ring $R$ are precisely the \textit{additively cancellative rigs} \pbref{par:pr_rings}.

Given a preordered ring $R$, we call left $R_+$-affine spaces \textit{left $R$-convex spaces} or \textit{left $R$-convex modules}.  In \bref{thm:cvx_cmtnt_charn} we establish a characterization of the class of all preordered rings $R$ for which the evident analogue of \eqref{eq:cmtnt_convex_sp} holds, and we then proceed to develop sufficient conditions that entail that a preordered ring $R$ belongs to this class, as we now outline.

Whereas the \textit{archimedean property} for totally ordered fields $R$ can be expressed in several equivalent ways, certain of these statements become inequivalent when one passes to arbitrary preordered rings $R$.  In particular, we define the notion of \textit{firmly archimedean} preordered ring \pbref{def:firmly_arch}, noting that a nonzero totally ordered ring is firmly archimedean if and only if it is archimedean.  Given any integer $d > 1$, we prove that if $R$ is a firmly archimedean preordered ring and $d$ is invertible in the rig $R_+$, then the relevant analogue of \eqref{eq:cmtnt_convex_sp} holds \pbref{thm:suff_conds}.  But $d$ is invertible in $R_+$ if and only if there exists a (necessarily unique) morphism of preordered rings from the ring of \textit{$d$-adic fractions} $\ZZ[\frac{1}{d}]$ into $R$ (\bref{par:dyadic}, \bref{thm:preord_dalg}), so this result can be stated as follows:
$$
\begin{minipage}{4.5in}
\textit{Let $R$ be a firmly archimedean preordered algebra over $\ZZ[\frac{1}{d}]$, for some integer $d > 1$.  Then the Lawvere theory of left $R$-convex spaces and the Lawvere theory of pointed right $R_+$-modules are commutants of one another in the full finitary theory of $R_+$ in $\Set$.}
\end{minipage}
$$
In particular, this applies to (i) the ring of real numbers $R = \RR$, (ii) the ring of \textit{dyadic rationals} $R = \ZZ[\frac{1}{2}]$, (iii) the ring $R$ of all bounded real-valued functions on a set, or any sub-$\ZZ[\frac{1}{d}]$-algebra thereof, and in particular (iv) the ring $R = C(X)$ of all continuous functions on a compact space $X$.

We begin in \S \bref{sec:lth} with a survey of basic material concerning Lawvere theories, and we discuss several examples of Lawvere theories for use in the sequel.  In \S \bref{sec:aff_cvx} we define the Lawvere theory of left $R$-affine spaces for a rig $R$ and the Lawvere theory of left $R$-convex spaces for a preordered ring $R$.  In \S \bref{sec:cmtn} we provide a self-contained treatment of the notion of commutation of morphisms in a Lawvere theory $\T$ by studying in detail the \textit{first and second Kronecker products} of morphisms in $\T$ \pbref{def:kps_cmt}.  Noting that these Kronecker products in $\T$ depend on a choice of binary product projections in the category of finite cardinals \pbref{par:binary_prods_in_fincard}, we show that one specific such choice enables a rigourous proof that the first Kronecker product of morphisms in the category $\Mat_R$ of matrices over a rig $R$ is the classical Kronecker product of matrices \pbref{exa:kp_matr}, as we are not aware of any statement or proof of this in the literature.  In \S \bref{sec:cmtnt} we study the notion of commutant of a morphism of Lawvere theories $A:\T \rightarrow \U$, proving that it can be defined equivalently as the full finitary theory of $A$ in the category of $\T$-algebras in $\U$ \pbref{thm:cmtnt_full_th_alg}, and we treat the example of the theory of left $R$-modules for a rig $R$ \pbref{thm:cmtnt_lth_rmods}.  In \S \bref{sec:sat_bal} we show that the passage from a theory $\T$ over $\U$ to its commutant is a `self-adjoint' contravariant functor \pbref{thm:cmtnt_adjn}, and we study the notions of \textit{saturated} and \textit{balanced} subtheory \pbref{def:sat_bal}.  In \S \bref{sec:th_affsp_as_cmtnt}, \bref{sec:cmt_th_unb_slats}, \bref{sec:cmtnt_of_th_cvx_sp}, \bref{sec:cmtnt_th_cvx_sp} we derive our main results concerning the theories of $R$-affine and $R$-convex spaces and their commutants (\bref{thm:taffr_as_cmtnt}, \bref{thm:th_uslats_slatswtop_cmtnts}, \bref{thm:cmtnt_of_th_of_raff_sp}, \bref{thm:cvx_cmtnt_charn}, \bref{thm:suff_conds_for_aff_ext_prop}, \bref{thm:suff_conds}).

\begin{Acknowledgement}  The author thanks the anonymous referee for helpful suggestions and remarks.  In the original version of this paper, Theorems \bref{thm:suff_conds_for_aff_ext_prop} and \bref{thm:suff_conds} were formulated for preordered algebras $R$ over the dyadic rationals $\ZZ[\frac{1}{2}]$.  However, the referee supplied an argument to the effect that equation \eqref{eq:common_diff} still holds when one replaces $\ZZ[\frac{1}{2}]$ with the ring of $d$-adic fractions $\ZZ[\frac{1}{d}]$ for any integer $d > 1$, thus showing that Theorems \bref{thm:suff_conds_for_aff_ext_prop} and \bref{thm:suff_conds} apply to the broader classes of preordered algebras for which they are now formulated herein.
\end{Acknowledgement}

\section{Lawvere theories, their algebras, and several examples}\label{sec:lth}

\begin{ParSub}[\textbf{Lawvere theories}]\label{par:lth}
A \textbf{Lawvere theory} is a small category $\T$ equipped with an identity-on-objects functor $\tau:\FinCard^\op \rightarrow \T$ that preserves finite powers, where $\FinCard$ is the full subcategory of $\Set$ consisting of the finite cardinals.  We may identify finite cardinals with natural numbers, so that $\ob\T = \NN$ is the set of all natural numbers.  When we want to emphasize that a natural number $n$ is to be treated as an object of $\T$, we will sometimes denote it by $\tau(n)$.

Since $\FinCard$ has finite copowers, its opposite $\FinCard^\op$ has finite powers and hence every Lawvere theory $\T$ has finite powers, furnished by $\tau$.  In particular, each finite cardinal $n$ is an $n$-th copower of $1$ in $\FinCard$, so $n$ is an $n$-th \textit{power} of $1$ in $\T$.  In symbols, $n = \tau(n) = \tau(1)^n$ in $\T$.  For ease of notation we will sometimes write $T = \tau(1)$ and correspondingly write $T^n$ for the object $n$ of $\T$.  Choosing designated $n$-th copower cocones $(\iota_i:1 \rightarrow n)_{i = 1}^n$ in $\FinCard$ in the evident way, we thus obtain designated $n$-th power cones $(\pi_i = \tau(\iota_i):T^n \rightarrow T)$ in $\T$.  Moreover, $\tau$ can then be characterized as the functor $T^{(-)}:\FinCard^\op \rightarrow \T$ that is given on objects by $n \mapsto T^ n$ and sends each mapping $f:m \rightarrow n$ in $\FinCard$ to the induced morphism $T^f:T^n \rightarrow T^m$.

We say that an object $C$ of a category $\C$ has \textbf{designated finite powers} if it is equipped with a specified choice of $n$-th power $C^n$ in $\C$ for each $n \in \NN$.  We say that these designated finite powers of $C$ are \textbf{standard} if $C^1 = C$, with the identity morphism $1_C$ as the designated projection $\pi_1:C^1 \rightarrow C$.  For example, the designated $n$-th powers of $\tau(1)$ in a Lawvere theory $(\T,\tau)$ are standard, since the designated morphism $\iota_1:1 \rightarrow 1$ in $\FinCard$ is necessarily the identity.  It shall be convenient to fix a choice of standard designated finite powers $\tau(m)^n$ of each of the objects $\tau(m) = m$ of $\T$, and we may assume that this choice of powers extends the basic choice $\tau(1)^n = n$ in the case that $m = 1$.

In fact, a Lawvere theory is equivalently given by a small category $\T$ with objects $\ob\T = \NN$ in which each object $n$ carries the structure of an $n$-th power of $1$, such that these designated $n$-th powers of $1$ are standard\footnote{Many authors drop the latter condition, with the immaterial consequence that a Lawvere theory $\T$ may then carry an irrelevant specified automorphism $\pi_1:1 \rightarrow 1$ of $1$.}.
\end{ParSub}

\begin{NotnSub}
We will use each of the following notations interchangeably to denote a given Lawvere theory:
$$\T,\;\;\;\;\;\;\;(\T,\tau),\;\;\;\;\;\;\;\left(\T,T^{(-)}\right),\;\;\;\;\;\;\;(\T,T)\;.$$
We regard the last notation as a construct for naming both $\T$ and the object $T = 1$ of $\T$.
\end{NotnSub}

\begin{ParSub}[\textbf{The category of Lawvere theories}]\label{par:cat_th}
Given Lawvere theories $\T$ and $\U$, a \textbf{morphism} $M:\T \rightarrow \U$ is a functor that commutes with the associated functors $\FinCard^\op \rightarrow \T$ and $\FinCard^\op \rightarrow \U$.  Thus Lawvere theories are the objects of a category $\Th$.  Observe that $\FinCard^\op$ is a Lawvere theory, when equipped with its identity functor, and hence is an initial object of $\Th$.  A morphism of theories may be equivalently defined as a functor that strictly preserves the designated $n$-th power projections $\pi_i:n \rightarrow 1$ (and so, in particular, is identity-on-objects).  Given a Lawvere theory $\U$, a \textbf{subtheory} of $\U$ is a Lawvere theory $\T$ equipped with a morphism $\T \hookrightarrow \U$ that is faithful as a functor.  Subtheories are the objects of a full subcategory $\SubTh(\U)$ of the slice category $\Th \slash \U$, and $\SubTh(\U)$ is clearly a preordered set.  A \textbf{concrete subtheory} is a subtheory $\T \hookrightarrow \U$ for which the associated mapping $\mor\T \rightarrow \mor\U$ is simply the inclusion of a subset $\mor\T \subseteq \mor\U$.  Clearly every subtheory of $\U$ is isomorphic to a concrete subtheory.  A concrete subtheory of $(\U,U)$ is equivalently given by a subset $\T \subseteq \mor\U$ such that (i) $\T$ contains the designated projections $\pi_i:U^n \rightarrow U$, (ii) $\T$ is closed under composition, and (iii) given a family of morphisms $\{\omega_i:U^n \rightarrow U \mid i = 1,...,m\} \subseteq \T$, the induced morphism $\omega:U^n \rightarrow U^m$ lies in $\T$.
\end{ParSub}

\begin{ParSub}[$\T$-\textbf{algebras}]\label{par:talgs}
Given a Lawvere theory $(\T,T)$ and a category $\C$, a $\T$-\textbf{algebra} in $\C$ is a functor $A:\T \rightarrow \C$ that preserves finite powers.  We call $\ca{A} := A(T)$ the \textit{carrier} of $A$.  Hence for each natural number $n$, the object $A(T^n)$ is simply an $n$-th power $\ca{A}^n$ of the carrier $\ca{A}$ in $\C$.  However, when $\C$ has designated finite powers of each of its objects, the $n$-th power $A(T^n) = \ca{A}^n$ need not be the designated $n$-th power.  Assuming that $\C$ has \textit{standard} designated finite powers \pbref{par:lth}, we therefore define a \textbf{normal} $\T$-\textbf{algebra} to be a functor $A:\T \rightarrow \C$ that sends the designated $n$-th power projections $\pi_i:T^n \rightarrow T$ in $\T$ to the designated $n$-th power projections $\pi_i:\ca{A}^n \rightarrow \ca{A}$ in $\C$.  Since a functor on $\T$ preserves finite powers as soon as it preserves finite powers of $T$, every normal $\T$-algebra is necessarily a $\T$-algebra.  Observe that a morphism of Lawvere theories $A:(\T,T) \rightarrow (\U,U)$ is equivalently defined as a normal $\T$-algebra in $\U$ with carrier $A(T) = U$.  With this in mind, note also that a normal $\T$-algebra $A:\T \rightarrow \C$ is uniquely determined by its carrier and its components $A_{n,1}:\T(n,1) \rightarrow \C(\ca{A}^n,\ca{A})$, $n \in \NN$.  Related to this, observe that a morphism $A:\T \rightarrow \U$ is an isomorphism (resp. a subtheory embedding) if and only if $A_{n,1}$ is an isomorphism (resp. a monomorphism) in $\Set$ for each $n \in \NN$.
\end{ParSub}

\begin{ParSub}[\textbf{The category of} $\T$-\textbf{algebras}]\label{par:cat_talgs}
Letting $(\T,T)$ be a Lawvere theory, observe that $\T$-algebras in a given category $\C$ are the objects of a full subcategory $\Alg{\T}_\C$ of the functor category $[\T,\C]$.  We call natural transformations between $\T$-algebras $\T$-\textbf{homomorphisms}.  When $\C$ has standard designated finite powers, we denote by $\Alg{\T}_\C^!$ the full subcategory of $\Alg{\T}$ with objects all normal $\T$-algebras in $\C$.  In fact we obtain an equivalence of categories
$$\Alg{\T}_\C^! \simeq \Alg{\T}_\C$$
between normal $\T$-algebras and arbitrary $\T$-algebras; this is proved in general context in \cite[5.14]{Lu:EnrAlgTh}.

There is a canonical functor $\ca{\text{$-$}} = \Ev_T:\Alg{\T}_\C \rightarrow \C$ given by evaluation at $T$, and in fact this functor is faithful.  Indeed, given a $\T$-homomorphism $\phi:A \rightarrow B$, we know that for each $n \in \NN$, the morphisms $A(\pi_i):A(T^n) \rightarrow A(T) = \ca{A}$ present $A(T^n)$ as an $n$-th power $\ca{A}^n$ of $\ca{A} = A(T)$ in $\C$, and similarly for $B$, and the naturality of $\phi$ entails that the component $\phi_{T^n}:A(T^n) \rightarrow B(T^n)$ is simply the morphism $f^n:\ca{A}^n \rightarrow \ca{B}^n$ induced by $f := \phi_T:\ca{A} \rightarrow \ca{B}$.  Hence the mapping $\Ev_T:\Alg{\T}_\C(A,B) \rightarrow \C(\ca{A},\ca{B})$ is injective.  In fact, \textit{via this injective map we can and will identify $\Alg{\T}_\C(A,B)$ with the subset of $\C(\ca{A},\ca{B})$ consisting of all morphisms $f:\ca{A} \rightarrow \ca{B}$ such that
\begin{equation}\label{eq:f_pres_mu}
\xymatrix{
\ca{A}^n \ar[d]_{A(\mu)} \ar[r]^{f^n} & \ca{B}^n \ar[d]^{B(\mu)}\\
\ca{A}^m \ar[r]_{f^m} & \ca{B}^m
}
\end{equation}
commutes for every morphism $\mu:T^n \rightarrow T^m$ in $\T$}.  Hence we say that a morphism $f:\ca{A} \rightarrow \ca{B}$ is a $\T$-\textbf{homomorphism} if it satisfies this condition.  To assert merely that the square \eqref{eq:f_pres_mu} commutes for a particular morphism $\mu:T^n \rightarrow T^m$, we say that $f$ \textbf{preserves $\mu$ (relative to $A$ and $B$)}.  Since each such morphism $\mu$ is induced by a family of morphisms $(\mu_i:T^n \rightarrow T)_{i = 1}^m$, it follows that $f$ preserves $\mu$ iff $f$ preserves each of the $\mu_i$.  Therefore $f$ is a $\T$-homomorphism iff $f$ preserves every morphism of the form $\omega:T^n \rightarrow T$ in $\T$.
\end{ParSub}

\begin{ParSub}[\textbf{Algebraic categories}]\label{par:alg_cats}
We shall often call $\T$-algebras in $\Set$ simply $\T$-\textbf{algebras}.  We write simply $\Alg{\T}$ for the category of $\T$-algebras in $\Set$ and $\Alg{\T}^!$ for its equivalent full subcategory consisting of normal $\T$-algebras.  We say that a functor $G:\A \rightarrow \Set$ is \textbf{strictly finitary-algebraic} (or that $\A$ is strictly finitary-algebraic over $\Set$, via $G$) if there exists a Lawvere theory $\T$ and an isomorphism $(\A,G) \cong (\Alg{\T}^!,\ca{\text{$-$}})$ in the slice category $\CAT \slash \Set$.  It is well known that $G$ is strictly finitary-algebraic if and only if $G$ is strictly monadic for a finitary monad $\TT$ on $\Set$, meaning that (i) $G$ has a left adjoint $F$, (ii) the endofunctor $T = GF$ preserves filtered colimits, and (iii) the comparison functor $\A \rightarrow \Set^\TT$ for the associated monad $\TT = (T,\eta,\mu)$ is an isomorphism\footnote{Indeed, this follows from \cite[4.2, 11.3, 11.8, 11.14]{Lu:EnrAlgTh}, noting that the isomorphisms in 11.14 there are isomorphisms in $\CAT \slash \Set$ when $\V = \Set$.}.  The associated theory $\T$ is obtained by forming the Kleisli category $\Set_\TT$ and defining $\T$ to be the full subcategory of $\Set_\TT^\op$ with objects the finite cardinals, so that $\T(n,m) = \Set(m,T(n)) \cong (T(n))^m$.  Note then that $\T(n,m) \cong \A(Fm,Fn)$, with composition as in $\A$, so that we have a fully faithful functor $\T^\op \rightarrowtail \A$.  The associated isomorphism $\A \rightarrow \Alg{\T}^!$ sends each object $A$ of $\A$ to a normal $\T$-algebra $\underline{A}:\T \rightarrow \Set$ that has carrier $GA$ and associates to each abstract operation $\omega \in \T(n,1)$ the mapping $\underline{A}\omega:(GA)^n \rightarrow GA$ defined as follows.  Regarding $n$ as a cardinal, we have an associated object $Fn$ of $\A$ and an isomorphism $\A(Fn,-) \cong \Set(n,G-) = (G-)^n$.  Hence each element $a = (a_1,...,a_n) \in (GA)^n$ determines a corresponding morphism $a^\sharp:Fn \rightarrow A$ in $\A$ and an underlying mapping $Ga^\sharp:GFn \rightarrow GA$.  Recalling that $GFn = Tn = \T(n,1)$, the associated mapping $\underline{A}\omega:(GA)^n \rightarrow GA$ is given by
$$(\underline{A}\omega)(a) = (Ga^\sharp)(\omega)\;.$$
\end{ParSub}

\begin{ParSub}[\textbf{Varieties of algebras}]\label{par:var_algs}
Let us now recall some points concerning the relation of Lawvere theories to classical universal algebra.  Mac Lane \cite[V.6, p. 120]{MacL}, for example, gives a concise introduction to the basic framework of classical universal algebra, defining the category $\Alg{\langle \Omega,E\rangle}$ of $\langle \Omega,E\rangle$-algebras for what he calls simply a \textit{type} $\langle \Omega,E\rangle$, where $\Omega$ and $E$ are suitable collections of formal operations and equations, respectively.  Any category of the form $\Alg{\langle \Omega,E\rangle}$ is called a \textbf{variety of finitary algebras}.  Theorem 1 of \cite[VI.8]{MacL} shows that the forgetful functor $\ca{\text{$-$}}:\Alg{\langle \Omega,E\rangle} \rightarrow \Set$ is strictly monadic, and, as noted in \cite[IX.1, p. 209]{MacL}, $\ca{\text{$-$}}$ creates filtered colimits, so the induced monad on $\Set$ is finitary.  Therefore $\ca{\text{$-$}}:\Alg{\langle \Omega,E\rangle} \rightarrow \Set$ is strictly finitary-algebraic.  In fact, it is well-known (and now straightforward to prove) that a functor $G:\A \rightarrow \Set$ is strictly finitary-algebraic if and only if the object $(\A,G)$ of $\CAT \slash \Set$ is isomorphic to $(\Alg{\langle \Omega,E\rangle},\ca{\text{$-$}})$ for some type $\langle \Omega,E\rangle$, though we shall not make use of this fact.
\end{ParSub}

\begin{ExaSub}[\textbf{Left $R$-modules}]\label{exa:law_th_rmods}
The category $\Mod{R}$ of left $R$-modules for a ring $R$ is a variety of finitary algebras and so is isomorphic to the category of normal $\T$-algebras $\Alg{\T}^!$ for a Lawvere theory $\T$.  By \bref{par:alg_cats}, the associated theory $\T$ has $\T(n,m) = (\ca{R}^n)^m$ since $R^n$ is the free $R$-module on $n$ generators, where we write $\ca{R}$ for the underlying set of $R$.  By identifying $(\ca{R}^n)^m$ with the set $\ca{R}^{m \times n}$ of $m \times n$-matrices, we can conveniently describe composition in $\T$ as matrix multiplication.  Hence $\T$ is the category $\Mat_R$ of $R$-\textbf{matrices}, whose objects are natural numbers and whose morphisms $w:n \rightarrow m$ are $m \times n$-matrices with entries in $R$.

The normal $\T$-algebra $\Mat_R \rightarrow \Set$ corresponding to a left $R$-module $M$ necessarily sends each object $n$ to the $n$-th power $\ca{M}^n$ of the underlying set $\ca{M}$ of $M$, and we identify $\ca{M}^n$ with the set $\ca{M}^{n \times 1}$ of $n$-element column vectors with entries in $M$.  Given an $m \times n$-matrix $w:n \rightarrow m$, the associated mapping $\ca{M}^{n \times 1} \rightarrow \ca{M}^{m \times 1}$ sends a column vector $x$ to the matrix product $wx$, whose entries are the $R$-linear combinations $(wx)_j = \sum_{i = 1}^n w_{ji}x_i$ in $M$.

This all applies equally to the case where $R$ is merely a \textbf{rig} (or \textit{semiring}), i.e. a set $R$ with two monoid structures $(R,+,0)$ and $(R,\cdot,1)$ with $+$ commutative, such that $\cdot:R \times R \rightarrow R$ preserves $+$ and $0$ in each variable separately.

Recall that we chose to regard $(\ca{R}^n)^m$ as the set of $m \times n$-matrices $\ca{R}^{m \times n}$, whereas we could have considered $n \times m$-matrices instead.  This line of inquiry is pursued below in the course of our discussion on \textit{commutants} \pbref{exa:cmtnt_lth_rmods}.
\end{ExaSub}

\begin{ExaSub}[\textbf{The Lawvere theory of commutative $k$-algebras}]\label{exa:lth_ckalgs}
Given a commutative ring $k$, the category $\CAlg{k}$ of commutative $k$-algebras is a variety of finitary algebras and so is isomorphic to the category $\Alg{\T}^!$ of normal $\T$-algebras for a Lawvere theory $\T$.  Since the polynomial ring $k[x_1,...,x_n]$ is the free commutative $k$-algebra on $n$-generators, we deduce by \bref{par:alg_cats} that $\T(n,1) = k[x_1,...,x_n]$, and more generally $\T(n,m) = (\T(n,1))^m$ is the set of $m$-tuples of $n$-variable polynomials.
\end{ExaSub}

\begin{ExaSub}[\textbf{The Lawvere theory of semilattices}]\label{exa:slats}
A \textbf{(bounded) join semilattice} can be defined either as a partially ordered set with finite joins, or, equivalently, as a commutative monoid in which every element is idempotent.  Hence the category $\SLat_\vee$ of join semilattices (and their homomorphisms) is a variety of finitary algebras.   The powerset $2^n$ of a finite cardinal $n$ is the free join semilattice on $n$ generators, namely the singletons $\{i\}$ with $i \in n$.  By \bref{par:alg_cats}, $\SLat_\vee$ is therefore isomorphic to the category $\Alg{\T}^!$ of normal $\T$-algebras for a Lawvere theory $\T$ with $\T(n,m) = (2^n)^m = 2^{m \times n}$, and we find that composition in $\T$ is given by matrix multiplication when we view $2 = \{0,1\}$ as a rig \pbref{exa:law_th_rmods} with underlying additive monoid $(2,\vee,0)$ and multiplicative monoid $(2,\wedge,1)$.  Hence $\T = \Mat_2$ is the category of $2$-matrices, and $\SLat_\vee = \Mod{2}$, so that join semilattices are the same as $2$-modules by \bref{exa:law_th_rmods}.

Exchanging joins for meets, the category $\SLat_\wedge$ of \textbf{(bounded) meet semilattices} is isomorphic to $\SLat_\vee$, via an isomorphism that commutes with the forgetful functors to $\Set$.
\end{ExaSub}

\begin{DefSub}[\textbf{The full finitary theory of an object}]\label{def:full_theory}
If a given object $C$ of a locally small category $\C$ has standard designated finite powers $C^n$, $n \in \NN$, then we obtain a Lawvere theory $\C_C$, called the \textbf{full finitary theory of $C$} in $\C$, with
$$\C_C(n,m) = \C(C^n,C^m),\;\;\;\;\;\;n,m \in \ob\C_C = \NN$$
such that the mapping $\NN \rightarrow \ob\C$, $n \mapsto C^n$, extends to an identity-on-homs functor $\C_C \rightarrowtail \C$, which is evidently a $\C_C$-algebra in $\C$ with carrier $C$.

In particular, given a Lawvere theory $(\T,T)$, any $\T$-algebra $A:\T \rightarrow \C$ endows its carrier $\ca{A} = A(T)$ with standard designated finite powers $\ca{A}^n = A(T^n)$ \pbref{par:talgs}, with respect to which we can form the full finitary theory of $\ca{A}$ in $\C$, which we shall denote by $\C_A$.  The given $\T$-algebra $A$ then factors uniquely as
$$
\xymatrix{
\T \ar[dr]_A \ar@{..>}[r]^{A'} & \C_A \ar@{ >->}[d]\\
                         & \C
}
$$
where $A'$ is a morphism of Lawvere theories, given on homs just as $A$.  By abuse of notation, we write simply $A$ to denote the morphism $A'$.

In the case that $\C$ has standard designated finite powers, morphisms of Lawvere theories $\T \rightarrow \C_C$ into the full finitary theory of an object $C$ of $\C$ are evidently in bijective correspondence with normal $\T$-algebras in $\C$ with carrier $C$.
\end{DefSub}

\begin{ExaSub}[\textbf{The Lawvere theory of Boolean algebras}]\label{exa:bool}
The category $\Bool$ of Boolean algebras is a variety of finitary algebras and so is isomorphic to the category $\Alg{\T}^!$ of normal $\T$-algebras for a Lawvere theory $\T$.  In fact, it is well-known that the morphism of theories $\T \rightarrow \Set_2$ determined by the Boolean algebra $2 = \{0,1\}$ is an isomorphism between $\T$ and the full finitary theory $\Set_2$ of $2$ in $\Set$.  Indeed, this follows from \cite[III.1, Example 4]{Law:PhD}.
\end{ExaSub}

\section{Affine and convex spaces}\label{sec:aff_cvx}

\begin{ParSub}[\textbf{The affine core of a Lawvere theory}]
Every Lawvere theory $(\T,T)$ has a subtheory $\T^{\aff} \hookrightarrow \T$ called the \textbf{affine core} of $\T$ \cite[\S 3]{Law:ProbsAlgTh}, consisting of all those morphisms $\omega:T^n \rightarrow T^m$ for which the composite
$$T \xrightarrow{(1_T,...,1_T)} T^n \overset{\omega}{\longrightarrow} T^m$$
equals $(1_T,...,1_T):T \rightarrow T^m$.  We say that $\T$ is \textbf{affine} if $\T$ equals its affine core.
\end{ParSub}

\begin{ParSub}[\textbf{Affine spaces over a ring or rig}]\label{exa:raff}
Let $R$ be a ring, or more generally, a rig.  Recall that the category of $R$-matrices $\Mat_R$ is the Lawvere theory of left $R$-modules \pbref{exa:law_th_rmods}.  By definition, a \textbf{(left)} $R$-\textbf{affine space} (or \textbf{(left)} $R$-\textbf{affine module}) is a normal $\T$-algebra for the affine core $\T = \Mat_R^\aff$ of $\Mat_R$.  Hence $R$-affine spaces are the objects of a category $\Aff{R} = \Alg{\Mat_R^\aff\:}^!$.  Since the projection morphisms $\pi_i:n \rightarrow 1$ in $\Mat_R$ are the standard basis vectors for $R^{1 \times n}$, one deduces that the affine part $\Mat_R^\aff$ consists of the $R$-matrices in which each row sums to $1$.  By \bref{par:talgs}, an $R$-affine space $E$ is therefore given by a set $E$ (the carrier) equipped with a suitable family of mappings $\Mat_R^\aff(n,1) \rightarrow \Set(E^n,E)$ that associate to each $n$-element row vector $w = [w_1,...,w_n]$ with $\sum_{i = 1}^n w_i = 1$ a mapping $E^n \rightarrow E$ whose value at a given column vector $x \in E^n$ we write as $\sum_{i = 1}^n w_ix_i$ and call a \textbf{(left)} $R$-\textbf{affine combination} of the $x_i$.  For example, since the morphism of theories $\Mat_R^\aff \hookrightarrow \Mat_R$ induces a functor $\Mod{R} \rightarrow \Aff{R}$, every left $R$-module $M$ carries the structure of a left $R$-affine space.  The morphisms in the category of (left) $R$-affine spaces $\Aff{R}$ are \textbf{(left)} $R$-\textbf{affine maps}, i.e. those mappings that preserve left $R$-affine combinations.
\end{ParSub}

\begin{ExaSub}[\textbf{Unbounded semilattices as affine spaces}]\label{exa:unb_jslats}
An \textbf{unbounded join semilattice} may be defined as a poset in which every pair of elements has a join or, equivalently, as a commutative semigroup in which every element is idempotent.  The set $2^n \backslash \{0\}$ of all nonempty subsets of a given finite cardinal $n$ is closed under binary joins in the semilattice $2^n$ \pbref{exa:slats} and so carries the structure of an unbounded join semilattice, and the singletons $\{i\}$ with $i \in n$ exhibit $2^n \backslash \{0\}$ as the free unbounded join semilattice on $n$ generators.  The category $\USLat_\vee$ of unbounded semilattices and their homomorphisms is a variety of finitary algebras and so is isomorphic to the category of normal $\T$-algebras for a Lawvere theory $\T$ with $\T(n,m) = (2^n \backslash \{0\})^m$.  The latter set may be identified with the subset of $2^{m \times n}$ consisting of all $m \times n$-matrices in which each row is nonzero, whereupon we deduce that $\T$ is a subtheory of the theory $\Mat_2$ of modules over the rig $(2,\vee,0,\wedge,1)$, i.e. (bounded) semilattices \pbref{exa:slats}.  Indeed, $\T$ is precisely the theory $\Mat_2^\aff$ of affine spaces over the rig $(2,\vee,0,\wedge,1)$.  Hence $\USLat_\vee \cong \Aff{2}$, i.e. unbounded semilattices are the same as affine spaces over the rig $(2,\vee,0,\wedge,1)$.
\end{ExaSub}

\begin{ParSub}[\textbf{Preordered abelian groups and preordered rings}]\label{par:pr_rings}
By definition, a \textbf{preordered commutative monoid} is a commutative monoid object in the cartesian monoidal category $\Ord$ of preordered sets and monotone maps.  Preordered commutative monoids are the objects of a category $\CMon(\Ord)$, the category of commutative monoids in $\Ord$.  We say that a preordered commutative monoid $M$ is a \textbf{preordered abelian group} if its underlying commutative monoid (in $\Set$) is an abelian group.  Note then that the negation map $-:M \rightarrow M$ is not monotone but rather is order-reversing.  Preordered abelian groups form a full subcategory $\Ab_\lt$ of the category of preordered commutative monoids.

It is well-known that the notion of preordered abelian group can be equivalently defined as an abelian group $M$ equipped with a submonoid $M_+ \hookrightarrow M$.  Indeed, given a preordered abelian group $M$, one takes $M_+ = \{m \in M \mid 0 \lt m\}$, and conversely, given a submonoid $M_+$ of an abelian group $M$ one defines a preorder $\lt$ on $M$ by $m \lt m'\;\Leftrightarrow m' - m \in M_+$.  It is conventional to call $M_+$ the \textbf{positive part} of $M$ despite the fact that $0 \in M_+$.  Morphisms of preordered abelian groups can be described equivalently as homomorphisms of the underlying abelian groups $h:M \rightarrow N$ with the property that $h(M_+) \subseteq N_+$.

The category $\Ab_\lt$ of preordered abelian groups is symmetric monoidal when we define the monoidal product $M \otimes N$ of preordered abelian groups $M$ and $N$ to be the usual tensor product of abelian groups equipped with the submonoid $(M \otimes N)_+ \hookrightarrow M \otimes N$ generated by the pure symbols $m \otimes n$ with $m \in M_+$, $n \in N_+$.  The unit object is $\ZZ$, with the natural order.

By definition, a \textbf{preordered ring} is a monoid in the monoidal category of preordered abelian groups $\Ab_\lt$.  Equivalently, a preordered ring is a ring $R$ equipped with an arbitrary subrig $R_+ \hookrightarrow R$.  Note that any rig $S$ that occurs as the positive part $R_+$ of some preordered ring $R$ is necessarily \textbf{additively cancellative}, meaning that the commutative semigroup $(S,+)$ is cancellative (i.e., $s + t = s + u\;\Rightarrow\;t = u$).  Moreover, the rigs that occur as positive parts of preordered rings are precisely the additively cancellative rigs, since if $S$ is additively cancellative then we can embed $S$ into its \textit{ring completion}, mimicking the usual construction of $\ZZ$ from $\NN$.

Preordered rings are the objects of a category $\Ring_\lt$, the category of monoids in the monoidal category $\Ab_\lt$, in which the morphisms are ring homomorphism that are also monotone.
\end{ParSub}

\begin{DefSub}[\textbf{Convex spaces over a preordered ring}]
Given a preordered ring $R$, a \textbf{(left)} $R$-\textbf{convex space} (or \textbf{(left)} $R$-\textbf{convex module}) is a left $R_+$-affine space, i.e. a left affine space over the rig $R_+$ obtained as the positive part of $R$ \pbref{par:pr_rings}.  Hence an $R$-convex space is by definition a set equipped with operations that permit the taking of left $R_+$-affine combinations \pbref{exa:raff}, which we call \textbf{(left)} $R$-\textbf{convex combinations}.  We write $\Cvx{R} := \Aff{R_+}$ for the category of $R$-convex spaces.  Note that $R$-convex spaces are the normal $\T$-algebras for the Lawvere theory $\T = \Mat_{R_+}^\aff$, whose morphisms are $R_+$-matrices in which each row sums to $1$.
\end{DefSub}

\begin{ExaSub}[\textbf{Convex spaces over the reals}]
For example, when $R$ is the real numbers $\RR$ with the usual order, the notion of $\RR$-convex space is the familiar notion of \textit{convex space}.  Observe that when $n > 0 $, $\Mat_{\RR_+}^\aff(n,1) \subseteq \RR^{1 \times n}$ is the standard geometric $(n-1)$-simplex, presented in terms of barycentric coordinates.
\end{ExaSub}

\begin{ExaSub}[\textbf{Convex spaces over a ring of continuous functions}]
Given a topological space $X$, let $C(X)$ denote the ring of all real-valued continuous functions on $X$.  The pointwise partial order on $C(X)$ makes it a preordered ring whose positive part $C(X)_+$ is the set $C(X,\RR_+)$ of all continuous $\RR_+$-valued functions.  Given any convex subset $S$ of $\RR^n$, the set $C(X,S)$ of all continuous $S$-valued functions on $X$ carries the structure of a $C(X)$-convex space.
\end{ExaSub}

\section{Commutation and Kronecker products of operations}\label{sec:cmtn}

\begin{ParSub}\label{par:binary_prods_in_fincard}
Given a pair of finite cardinals $(j,k)$, regarded as sets, the cartesian product $j \times k$ in $\Set$ has cardinality $jk$, so $jk$ serves as a product in $\FinCard$ of the objects $j$ and $k$.  Despite this apparently simple way of forming binary products in $\FinCard$, one must also choose projections $\pi^{(j,k)}_1:jk \rightarrow j$ and $\pi^{(j,k)}_2:jk \rightarrow k$ if $jk$ is to be equipped with the structure of a product of $(j,k)$ in $\FinCard$, and there is more than one way to do this.  In the present section, we must fix a determinate choice of such product projections---equivalently, we must fix designated bijections $j \times k \rightarrow jk$ in $\Set$ that let us \textit{encode} elements of the cartesian product $j \times k$ as elements of $jk$.  For example, we shall see in \bref{exa:kp_matr} that the definition of the classical \textit{Kronecker product} of matrices depends on one specific such encoding \eqref{eq:kp_prod_enc}.  With such a choice, $\FinCard$ is cartesian monoidal.  When $jk$ is to be regarded as a product of $(j,k)$ via the chosen projections we shall denote it by $j \times k$.  It is important to note that the symmetry isomorphism $s_{j,k}:j \times k \rightarrow k \times j$ is \textit{not} in general the identity map on $jk = kj$.  We shall take the designated projections $\pi_1:j = j \times 1 \rightarrow j$ and $\pi_2:k = 1 \times k \rightarrow k$ to be the identity maps.
\end{ParSub}

\begin{ParSub}\label{par:bif_kp}
Let $(\T,T)$ be a Lawvere theory.  Since the product $jk$ of natural numbers $j$ and $k$ carries the structure of a product $j \times k$ in $\FinCard$ \pbref{par:binary_prods_in_fincard}, there is an associated canonical way of equipping the object $T^{j \times k}$ of $\T$ with the structure of both a $j$-th power of $T^k$ and also a $k$-th power of $T^j$, which we may signify informally by writing
\begin{equation}\label{eq:lr_powers}(T^k)^j = T^{j \times k} = (T^j)^k\;.\end{equation}
Writing $p^{(j,k)} = (p^{(j,k)}_v:T^{j \times k} \rightarrow T^k)_{v = 1}^j$ and $q^{(j,k)} = (q^{(j,k)}_t:T^{j \times k} \rightarrow T^j)_{t = 1}^k$ for the associated power cones, we call $(T^{j \times k},p^{(j,k)})$ the \textbf{left} $j$-\textbf{th power} of $T^k$, and we call $(T^{j \times k},q^{(j,k)})$ the \textbf{right} $k$-\textbf{th power}\footnote{It is instructive to note that the right $k$-th power cone $q^{(j,k)}$ is not in general equal to the left $k$-th power cone $p^{(k,j)}$, despite the fact that these cones both equip the same object $T^{jk}$ with the structure of a $k$-th power of $T^j$.  Indeed, the automorphism of $T^{jk}$ induced by this pair of $k$-th power cones is the isomorphism $T^{k \times j} \rightarrow T^{j \times k}$ determined by the symmetry isomorphism $j \times k \rightarrow k \times j$ in $\FinCard$ \pbref{par:binary_prods_in_fincard}.} of $T^j$.  Writing $T^j * T^k = T^{j \times k}$, we therefore obtain evident functors 
$$T^j * (-):\T \rightarrow \T,\;\;\;\;T^k \mapsto T^{j \times k}\;\;\;\;\;\;\;\;\;(j \in \NN)$$
$$(-) * T^k:\T \rightarrow \T,\;\;\;\;T^j \mapsto T^{j \times k}\;\;\;\;\;\;\;\;\;(k \in \NN)$$
induced by left $j$-th and right $k$-th power structures, respectively, carried by the objects $T^{j \times k}$ with $(j,k) \in \NN \times \NN$.  This now begs the question as to whether these are the partial functors of a bifunctor
$$*\;\;:\;\;\T \times \T \rightarrow \T$$
given on objects by $(T^j,T^k) \mapsto T^{j \times k}$.  By \cite[II.3, Prop. 1]{MacL}, this is the case if and only if for every pair of morphisms $\mu:T^j \rightarrow T^{j'}$ and $\nu:T^k \rightarrow T^{k'}$ in $\T$ the composites
\begin{equation}\label{eq:kps}
\begin{array}{ll}
1. & T^j * T^k \xrightarrow{\mu * T^k} T^{j'} * T^k \xrightarrow{T^{j'} * \nu} T^{j'} * T^{k'}\\
2. & T^j * T^k \xrightarrow{T^j * \nu} T^j * T^{k'} \xrightarrow{\mu * T^{k'}} T^{j'} * T^{k'}
\end{array}
\end{equation}
are equal.  This leads to the following:
\end{ParSub}

\begin{DefSub}\label{def:kps_cmt}\emptybox
\begin{enumerate}
\item Given morphisms $\mu:T^j \rightarrow T^{j'}$ and $\nu:T^{k} \rightarrow T^{k'}$ in $\T$, the \textbf{first and second Kronecker products} $\mu * \nu$ and $\mu \stt \nu$ of $\mu$ with $\nu$ are defined as the composites 1 and 2 in \eqref{eq:kps}, respectively, i.e.
$$\mu * \nu = \left(T^{j \times k} \xrightarrow{\mu * T^k} T^{j' \times k} \xrightarrow{T^{j'} * \nu} T^{j' \times k'}\right)\;,$$
$$\mu \stt \nu = \left(T^{j \times k} \xrightarrow{T^j * \nu} T^{j \times k'} \xrightarrow{\mu * T^{k'}} T^{j' \times k'}\right)\;.$$
\item We say that $\mu$ \textbf{commutes with} $\nu$ if $\mu * \nu = \mu \stt \nu$.
\item We say that $\T$ is \textbf{commutative} if $\mu$ commutes with $\nu$ for every pair of morphisms $\mu$ and $\nu$ in $\T$.
\end{enumerate}
\end{DefSub}

\begin{ExaSub}[\textbf{The Kronecker product of matrices}]\label{exa:kp_matr}
Given a ring $R$, or even just a rig $R$, consider the Lawvere theory of left $R$-modules, i.e. the category $\Mat_R$ of $R$-matrices \pbref{exa:law_th_rmods}.  Recall that morphisms $j \rightarrow j'$ in $\Mat_R$ are $j' \times j$-matrices.  Since such matrices are usually indexed by pairs of \textit{positive} integers, it shall be convenient here to depart from the usual von Neumann definition of the ordinals and instead identify each object $j$ of $\FinCard$ with the set of all positive integers less than or equal to $j$, so that the above $j' \times j$-matrices are families indexed by the usual cartesian product\footnote{putting aside for the moment our similar notation for the product in $\FinCard$ \pbref{par:binary_prods_in_fincard}.} $j' \times j$.  

Letting $X \in \Mat_R(j,j') = R^{j'\times j}$ and $Y \in \Mat_R(k,k') = R^{k' \times k}$, the classical \textbf{Kronecker product} of $Y$ by $X$ is the $j'k' \times jk$-matrix $Y \otimes X$ with entries
\begin{equation}\label{eq:kp_matr}(Y \otimes X)_{\langle u,s\rangle\langle v,t\rangle} = Y_{st}X_{uv}\;\;\;\;\;\;\;\;u \in j',\:s \in k',\:v \in j,\:t \in k\;.\end{equation}
where in general we write $\langle v, t \rangle$ to denote the element 
\begin{equation}\label{eq:kp_prod_enc}\langle v, t \rangle = v + j(t - 1)\end{equation}
of $jk$ associated to the pair $(v,t) \in j \times k$.  The hidden reason behind the seemingly arbitrary convention \eqref{eq:kp_prod_enc} is that it provides one standard way of assigning to each pair of finite cardinals $(j,k)$ a bijection $\langle -,-\rangle:j \times k \rightarrow jk$, so that $jk$ is thus equipped with the structure of a product of $(j,k)$ in $\FinCard$.  Indeed, each element of $jk$ can be written in the form $\langle v, t \rangle$ for unique $v \in j$ and $t \in k$, and the maps $\pi_1:jk \rightarrow j$ and $\pi_2:jk \rightarrow k$ given by $\pi_1(\langle v, t\rangle) = v$ and $\pi_2(\langle v, t\rangle) = t$ present $jk$ as a product $j \times k$ in $\Set$ and hence in $\FinCard$.

With this choice of binary products in $\FinCard$, we claim that
$$X * Y = Y \otimes X\;,$$
i.e., the first Kronecker product $X * Y$ in $\Mat_R$ is the usual Kronecker product of matrices $Y \otimes X$.

In order to prove this, recall that $X * Y$ is defined as the composite
$$jk \xrightarrow{X * k} j'k \xrightarrow{j' * Y} j'k'$$
in $\Mat_R$, where here we write the objects $T^n$ of the theory $\T = \Mat_R$ simply as $n$ (since concretely $T^n = n$).  In order to examine the entries of the matrices $X * k$ and $j' * Y$, let us first note that for each object $n$ of $\Mat_R$, the designated $n$-th power cone $(P_i:n \rightarrow 1)_{i \in n}$ in $\Mat_R$ consists of the standard basis row-vectors $P_i \in R^{1 \times n}$, having a $1$ in the $i$-th position and zeros everywhere else.  In particular, $jk$ is a $jk$-th power of $1$ in $\Mat_R$, via the morphisms $P_{\langle v,t\rangle}:jk \rightarrow 1$ with $(v,t) \in j \times k$.  For fixed $t \in k$, these morphisms induce a unique morphism $Q^{(j,k)}_t:jk \rightarrow j$ in $\Mat_R$ such that $P_vQ^{(j,k)}_t = P_{\langle v,t\rangle}$ $(v \in j)$, and the resulting family\footnote{This family is the \textit{right $k$-th power cone} $q^{(j,k)}$ in the terminology of \bref{par:bif_kp}.} $(Q^{(j,k)}_t)_{t \in k}$ presents $jk$ as a $k$-th power of $j$ in $\Mat_R$.  Explicitly, $Q^{(j,k)}_t$ is the $j \times jk$-matrix with entries
$$(Q^{(j,k)}_t)_{vb} = \{\;1\;\text{if $b = \langle v,t\rangle$}, \;\;0\;\text{otherwise}$$
where $v \in j$ and $b \in jk$.  Similarly, morphisms $(Q^{(j',k)}_t:j'k \rightarrow j')_{t \in k}$ present $j'k$ as a $k$-th power of $j'$ in $\Mat_R$.  By definition, $X * k:jk \rightarrow j'k$ is the unique morphism such that $jk \xrightarrow{X * k} j'k \xrightarrow{Q^{(j',k)}_t} j'$ equals $jk \xrightarrow{Q^{(j,k)}_t} j \xrightarrow{X} j'$ for all $t \in k$.  It is straightforward to show therefore that $X * k$ is the $j'k \times jk$-matrix with entries
$$(X * k)_{\langle u,t_1\rangle\langle v,t_2\rangle} = \{\;X_{uv}\;\text{if $t_1 = t_2$}, \;\;0\;\text{otherwise},$$
where $u \in j'$, $v \in j$, and $t_1,t_2 \in k$.  Analogously\footnote{This time we use the \textit{left $j'$-th power cones} $p^{(j',k)}$ and $p^{(j',k')}$ in the terminology of \bref{par:bif_kp}.}, $j' * Y:j'k \rightarrow j'k'$ is the $j'k' \times j'k$-matrix with entries
$$(j' * Y)_{\langle u_1,s\rangle\langle u_2,t\rangle} = \{\;Y_{st}\;\text{if $u_1 = u_2$}, \;\;0\;\text{otherwise},$$
where $u_1,u_2 \in j'$, $s \in k'$ and $t \in k$.  But $X * Y$ is the matrix product $(j' * Y)(X * k) \in R^{j'k' \times jk}$, and one now readily computes that the entries of the latter product are exactly those of the Kronecker product $Y \otimes X$ (cf. \bref{eq:kp_matr}).
\end{ExaSub}

\begin{PropSub}[\textbf{Relation between the first and second Kronecker products}]\label{prop:rel_kps}
Given morphisms $\mu:T^j \rightarrow T^{j'}$ and $\nu:T^k \rightarrow T^{k'}$ in a Lawvere theory $\T$, the second Kronecker product $\mu \stt \nu$ can be expressed in terms of the first Kronecker product $\nu * \mu$ via the commutativity of the diagram
$$
\xymatrix{
T^{j \times k} \ar[d]_\wr \ar[r]^{\mu \stt \nu} & T^{j' \times k'} \ar[d]^\wr\\
T^{k \times j} \ar[r]_{\nu * \mu} & T^{k' \times j'}
}
$$
in which the left and right sides are the isomorphisms induced by the symmetry isomorphisms $k \times j \rightarrow j \times k$ and $k' \times j' \rightarrow j' \times k'$ in $\FinCard$ \pbref{par:binary_prods_in_fincard}.  As a consequence, the commutation relation is symmetric, i.e.
$$
\begin{minipage}{4in}
\textit{$\mu$ commutes with $\nu$ if and only if $\nu$ commutes with $\mu$.}
\end{minipage}
$$
\end{PropSub}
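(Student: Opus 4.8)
The plan is to reduce the whole statement to a single compatibility fact --- that the symmetry isomorphisms $T^{s_{j,k}}\colon T^{k\times j}\to T^{j\times k}$ interchange the left and right power-cone structures on the objects $T^{j\times k}$ of \bref{par:bif_kp} --- and then to chain two instances of the resulting ``conjugation'' identity; the symmetry of commutation will then fall out of the invertibility of the symmetries.

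First I would pin down how $T^{s_{j,k}}$ interacts with the designated cones. As a set, $j\times k$ is the coproduct in $\FinCard$ of $j$ copies of $k$, the $v$-th injection being $t\mapsto(v,t)$, and also of $k$ copies of $j$, the $t$-th injection being $v\mapsto(v,t)$; applying the contravariant functor $T^{(-)}$ to these injections yields precisely the left cone $p^{(j,k)}$ and the right cone $q^{(j,k)}$. Since the symmetry $s_{j,k}\colon j\times k\to k\times j$ sends $(v,t)$ to $(t,v)$, one checks at once that it composes with the $v$-th injection underlying $p^{(j,k)}_v$ to give the $v$-th injection underlying $q^{(k,j)}_v$, and with the $t$-th injection underlying $q^{(j,k)}_t$ to give the $t$-th injection underlying $p^{(k,j)}_t$; applying $T^{(-)}$ converts these two triangles in $\FinCard$ into the identities
$$p^{(j,k)}_v\circ T^{s_{j,k}}=q^{(k,j)}_v\ \ (v\in j),\qquad q^{(j,k)}_t\circ T^{s_{j,k}}=p^{(k,j)}_t\ \ (t\in k).$$
Thus $T^{s_{j,k}}$ is exactly the canonical comparison between the two presentations of $T^{jk}$ as a $j$-fold power of $T^k$ and as a $k$-fold power of $T^j$.

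Next I would deduce the conjugation identity: for any $\lambda\colon T^b\to T^{b'}$ and any $a$ one has $(T^a*\lambda)\circ T^{s_{a,b}}=T^{s_{a,b'}}\circ(\lambda*T^a)$ as morphisms $T^{b\times a}\to T^{a\times b'}$. This is proved by post-composing both sides with the jointly monic left power-cone $p^{(a,b')}$: on the left one uses the defining property of $T^a*\lambda$ relative to the cones $p^{(a,b)},p^{(a,b')}$ together with the first identity of the previous step, and on the right one uses the defining property of $\lambda*T^a$ relative to the right cones $q^{(b,a)},q^{(b',a)}$ together with the same identity applied at $b'$; both sides collapse to $\lambda\circ q^{(b,a)}_v$. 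Now, recalling from \bref{def:kps_cmt} that $\mu\stt\nu=(\mu*T^{k'})\circ(T^j*\nu)$ and $\nu*\mu=(T^{k'}*\mu)\circ(\nu*T^j)$, I would apply the conjugation identity once (with $\lambda=\mu$, $a=k'$) to rewrite $T^{s_{k',j'}}\circ(\mu*T^{k'})$ as $(T^{k'}*\mu)\circ T^{s_{k',j}}$, and once more (with $\lambda=\nu$, $a=j$, after inverting the relevant symmetries) to rewrite $T^{s_{k',j}}\circ(T^j*\nu)$ as $(\nu*T^j)\circ T^{s_{k,j}}$; composing, the middle symmetry disappears and one is left with $T^{s_{k',j'}}\circ(\mu\stt\nu)=(\nu*\mu)\circ T^{s_{k,j}}$, which is the asserted commutativity. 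For the corollary, invertibility of the symmetries turns this into an expression of $\mu\stt\nu$ purely in terms of $\nu*\mu$, while the same square with $\mu$ and $\nu$ interchanged expresses $\nu\stt\mu$ purely in terms of $\mu*\nu$; hence $\mu*\nu=\mu\stt\nu$ holds if and only if $\nu*\mu=\nu\stt\mu$, i.e.\ commutation is a symmetric relation.

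The only real difficulty here is bookkeeping, not mathematics: one must keep straight which of the two power-cone families, $p^{(\cdot,\cdot)}$ or $q^{(\cdot,\cdot)}$, characterizes each whiskered morphism $\mu*T^{k}$, $T^j*\nu$, and so on, and must track the several mutually inverse symmetries $T^{s_{\cdot,\cdot}}$, which live on objects sharing the same underlying cardinal $jk=kj$ but carrying different designated power structures. Once the two compatibility identities of the first step are nailed down, the remainder is a purely formal diagram chase.
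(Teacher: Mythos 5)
Your proof is correct and follows essentially the same route as the paper: your ``conjugation identity'' $(T^a*\lambda)\circ T^{s_{a,b}}=T^{s_{a,b'}}\circ(\lambda*T^a)$ is precisely the naturality of the isomorphism $T^a*(-)\Rightarrow(-)*T^a$ induced by the symmetries, which is the one fact the paper's proof invokes. You have simply written out in full the verification of that naturality against the power cones and the two-step chaining that the paper leaves implicit.
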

\begin{proof}
This follows from the fact that for each fixed $n \in \NN$, the symmetry isomorphisms $m \times n \rightarrow n \times m$ in $\FinCard$ \pbref{par:binary_prods_in_fincard} with $m \in \NN$ induce isomorphisms $T^{n \times m} \rightarrow T^{m \times n}$ in $\T$ that constitute a \textit{natural} isomorphism $T^n * (-) \Rightarrow (-) * T^n$.
\end{proof}

\begin{ExaSub}[\textbf{The second Kronecker product of matrices}]\label{exa:second_kp_matr}
Continuing Example \bref{exa:kp_matr}, it now follows from \bref{prop:rel_kps} that the second Kronecker product $X \stt Y:jk \rightarrow j'k'$ of morphisms $X:j \rightarrow j'$ and $Y:k \rightarrow k'$ in $\Mat_R$ is the $j'k' \times jk$-matrix with entries
$$(X \stt Y)_{\langle u,s \rangle\langle v,t\rangle} = (Y * X)_{\langle s,u\rangle\langle t,v\rangle} = (X \otimes Y)_{\langle s,u\rangle\langle t,v\rangle} = X_{uv}Y_{st}$$
where $u \in j'$, $s \in k'$, $v \in j$, $t \in k$.  Hence if $R$ is commutative then $X \stt Y = Y \otimes X = X * Y$, showing that $\Mat_R$ is commutative.  Conversely, if $\Mat_R$ is commutative then by taking $j = j' = k = k' = 1$ we find that $R$ is commutative.  Hence we have proved the following:
\begin{equation}\label{eq:lth_rmods_cmtv_iff_r_cmtv}
\begin{minipage}{3.9in}
\textit{The Lawvere theory $\Mat_R$ of left $R$-modules for a rig $R$ is commutative if and only if $R$ is commutative.}
\end{minipage}
\end{equation}
In particular, the Lawvere theory $\Mat_2$ of semilattices is commutative.
\end{ExaSub}

Clearly any subtheory of a commutative theory is commutative.  In particular, the following Lawvere theories are commutative, as each is a subtheory of a theory of the form $\Mat_R$ for a commutative rig $R$:

\begin{ExaSub}\label{exa:commutative_lths}
The following Lawvere theories are commutative:
\begin{enumerate}
\item The theory $\Mat_R^\aff$ of $R$-affine spaces for a commutative ring or rig $R$.
\item The theory of $R$-convex spaces $\Mat_{R_+}^\aff$ for a commutative preordered ring $R$.
\item The theory of unbounded semilattices $\Mat_2^\aff$.
\end{enumerate}
\end{ExaSub}

\begin{ParSub}
Let $(\T,T)$ be a Lawvere theory.  Given $j \in \NN$, any choice of $j$-th powers in $\T$ determines an endofunctor $(-)^j:\T \rightarrow \T$, and since this endofunctor $(-)^j$ preserves finite powers it can be regarded as a $\T$-algebra in $\T$.  In particular, if we employ the \textit{left $j$-th powers} in $\T$ \pbref{par:bif_kp}, then the resulting endofunctor is the functor $T^j * (-):\T \rightarrow \T$ of \bref{par:bif_kp}, given on objects by $T^k \mapsto T^{j \times k}$.  Therefore $T^j * (-)$ is a $\T$-algebra in $\T$ with carrier $T^{j \times 1} = T^j$.  We employ this observation in the following:
\end{ParSub}

\begin{PropSub}\label{thm:cmtn_via_presn_and_cpnts}
Let $\mu:T^j \rightarrow T^{j'}$ and $\nu:T^k \rightarrow T^{k'}$ be morphisms in a Lawvere theory $(\T,T)$, and denote by $(\mu_u:T^j \rightarrow T)_{u = 1}^{j'}$ and $(\nu_s:T^k \rightarrow T)_{s = 1}^{k'}$ the families inducing $\mu$ and $\nu$, respectively.  Then we have $\T$-algebras $A = T^j * (-)$ and $B = T^{j'} * (-)$ in $\T$ with carriers $\ca{A} = T^j$, $\ca{B} = T^{j'}$, respectively, and the following are equivalent:
\begin{enumerate}
\item $\mu$ commutes with $\nu$.
\item $\mu:\ca{A} \rightarrow \ca{B}$ preserves $\nu$ relative to $A$ and $B$ \pbref{par:cat_talgs}.
\item $\mu_u:T^j \rightarrow T$ commutes with $\nu_s:T^k \rightarrow T$ for all indices $u$ and $s$.
\item $\mu$ commutes with each of the components $\nu_s$ of $\nu$.
\end{enumerate}
\end{PropSub}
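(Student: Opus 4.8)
The plan is to establish the chain $(1)\Leftrightarrow(2)\Leftrightarrow(4)$ directly, and then to obtain $(3)\Leftrightarrow(4)$ by playing the symmetry of commutation (\bref{prop:rel_kps}) against the equivalence $(1)\Leftrightarrow(4)$ with the roles of $\mu$ and $\nu$ interchanged.

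The core is $(1)\Leftrightarrow(2)$, and the idea there is simply that the naturality square \eqref{eq:f_pres_mu} --- instantiated with $f = \mu$, with the $\T$-algebras $A = T^j * (-)$ and $B = T^{j'} * (-)$, and with $\nu:T^k\to T^{k'}$ in the role of ``$\mu$'' --- has its four edges equal to the four morphisms appearing in the composites that define $\mu * \nu$ and $\mu \stt \nu$ in \bref{def:kps_cmt}. To see this one must pin down the $\T$-algebra structure of $A$: its carrier is $\ca{A} = A(T) = T^j$, so the object $A(T^k) = T^{j \times k}$ carries the structure of a $k$-th power of $T^j$ via the cone $(A(\pi_t))_{t=1}^{k}$, and the first thing to check is that $A(\pi_t) = T^j * \pi_t$ is exactly the right $k$-th power projection $q^{(j,k)}_t$ of \bref{par:bif_kp} --- both are characterized by the equations $\pi_w\circ(-) = \pi_{(w,t)}$ ($w = 1,\dots,j$) against the $(j\times k)$-fold power cone of $T$ in $\T$. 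Granting this (and the analogous statement for $B$), the vertical edges of \eqref{eq:f_pres_mu} are $A(\nu) = T^j * \nu$ and $B(\nu) = T^{j'} * \nu$ by definition of these functors, while the induced horizontal maps $\mu^k:\ca{A}^k\to\ca{B}^k$ and $\mu^{k'}:\ca{A}^{k'}\to\ca{B}^{k'}$ coincide with $\mu * T^k$ and $\mu * T^{k'}$ respectively, since each is the unique morphism satisfying the same system of equations against the $q$-cones. Reading the two composites around the square then gives $B(\nu)\circ\mu^k = \mu * \nu$ and $\mu^{k'}\circ A(\nu) = \mu \stt \nu$, so \eqref{eq:f_pres_mu} commutes precisely when $\mu$ commutes with $\nu$.

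Next, $(2)\Leftrightarrow(4)$: writing $\nu$ as induced by the family $(\nu_s:T^k\to T)_{s=1}^{k'}$, the last paragraph of \bref{par:cat_talgs} says that $\mu$ preserves $\nu$ relative to $A$ and $B$ iff it preserves each $\nu_s$ relative to $A$ and $B$; and applying the already-proved equivalence $(1)\Leftrightarrow(2)$ with each $\nu_s$ in place of $\nu$ (the algebras $A,B$ being independent of $\nu$), preservation of $\nu_s$ amounts to $\mu$ commuting with $\nu_s$. This gives $(1)\Leftrightarrow(2)\Leftrightarrow(4)$. Finally, $(3)\Leftrightarrow(4)$: by \bref{prop:rel_kps} the relation ``commutes with'' is symmetric, so for each fixed $s$, ``$\mu$ commutes with $\nu_s$'' is equivalent to ``$\nu_s$ commutes with $\mu$'', which by the equivalence $(1)\Leftrightarrow(4)$ applied to the pair $(\nu_s,\mu)$ is equivalent to ``$\nu_s$ commutes with each component $\mu_u$ of $\mu$'', equivalently (symmetry once more) to ``$\mu_u$ commutes with $\nu_s$ for all $u$''; quantifying over $s$ yields $(3)\Leftrightarrow(4)$.

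The main obstacle is entirely in the bookkeeping of $(1)\Leftrightarrow(2)$: one has to keep the left- and right-handed power cones of \bref{par:bif_kp} carefully apart, and in particular to absorb the slightly counterintuitive point that although $A = T^j * (-)$ is the functor ``induced by the left $j$-th powers'', the power cone exhibiting $A(T^k) = T^{j\times k}$ as a $k$-th power of the carrier $T^j$ is the \emph{right} $k$-th power cone $q^{(j,k)}$. Once the identifications of the edges of \eqref{eq:f_pres_mu} are made, everything else is formal.
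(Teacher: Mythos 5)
Your proof is correct and follows essentially the same route as the paper: the equivalence $(1)\Leftrightarrow(2)$ by matching the edges of the square \eqref{eq:f_pres_mu} with the factors of the two Kronecker products, then $(2)\Leftrightarrow(4)$ via componentwise preservation from \bref{par:cat_talgs}, and finally $(3)\Leftrightarrow(4)$ by bootstrapping the already-proved equivalence through the symmetry of \bref{prop:rel_kps}. The only difference is that you spell out in full the identification of $A(\pi_t)$ with the right $k$-th power cone $q^{(j,k)}_t$ and of $\mu^k$ with $\mu * T^k$, which the paper compresses into ``follows readily from the definitions''; your verification of these identifications is accurate.
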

\begin{proof}
The equivalence 1 $\Leftrightarrow$ 2 follows readily from the definitions.  By \bref{par:cat_talgs}, $2$ is equivalent to the following statement:
\begin{enumerate}
\item[5.] $\mu:\ca{A} \rightarrow \ca{B}$ preserves each of the components $\nu_s$ of $\nu$.
\end{enumerate}
Now invoking the equivalence 1 $\Leftrightarrow$ 2 with respect to the morphisms $\mu$ and $\nu_s$, we deduce that 5 is equivalent to 4.  By symmetry, 4 holds iff each component $\nu_s$ commutes with $\mu$.  Having established the equivalence of 1 and 4 for an arbitrary pair of morphisms $(\mu,\nu)$, we can now invoke this equivalence with respect to each pair $(\nu_s,\mu)$ to deduce that 4 holds if and only if each $\nu_s$ commutes with each of the components $\mu_u$ of $\mu$, and by symmetry this is equivalent to 3.
\end{proof}

\begin{RemSub}
Having reduced the notion of commutation of morphisms $\mu$ and $\nu$ in a Lawvere theory $(\T,T)$ to the case of morphisms of the form $\mu:T^j \rightarrow T$ and $\nu:T^k \rightarrow T$, observe that the definition of the first and second Kronecker products of such morphisms reduces to the following:
$$\mu * \nu = \left(T^{j \times k} \xrightarrow{\mu * T^k} T^k \xrightarrow{\nu} T\right),\;\;\;\;\;\;\;\;\mu \stt \nu = \left(T^{j \times k} \xrightarrow{T^j * \nu} T^j \xrightarrow{\mu} T\right).$$
\end{RemSub}

\section{Commutants}\label{sec:cmtnt}

\begin{DefSub}\label{def:cmtn_mor_th}
Let $\U$ be a Lawvere theory.
\begin{enumerate}
\item Letting $A: \T \rightarrow \U$ and $B:\sS \rightarrow \U$ be morphisms of Lawvere theories, we say that $A$ \textbf{commutes with} $B$ (or that $A$ and $B$ commute) if $A(\mu)$ commutes with $B(\nu)$ in $\U$ for all morphisms $\mu$ in $\T$ and $\nu$ in $\sS$.
\item A \textbf{Lawvere theory over $\U$} is a Lawvere theory $\T$ equipped with a morphism $\T \rightarrow \U$.  The \textbf{category of Lawvere theories over $\U$} is the slice category $\Th \slash \U$.
\item Given Lawvere theories $\T$ and $\sS$ over $\U$, we say that $\T$ \textbf{commutes with} $\sS$ if the associated morphisms to $\U$ commute.
\item Subtheories $\T$ and $\sS$ of $\U$ are said to \textbf{commute} if they commute as Lawvere theories over $\U$, i.e. if $\mu$ commutes with $\nu$ for all $\mu \in \mor\T$ and $\nu \in \mor\sS$.
\end{enumerate}
\end{DefSub}

\begin{PropSub}
Let $A:(\T,T) \rightarrow (\U,U)$ and $B:(\sS,S) \rightarrow (\U,U)$ be morphisms of Lawvere theories.  Then $A$ commutes with $B$ if and only if $A(\mu)$ commutes with $B(\nu)$ in $\U$ for all morphisms of the form $\mu:T^j \rightarrow T$ in $\T$ and $\nu:S^k \rightarrow S$ in $\sS$.
\end{PropSub}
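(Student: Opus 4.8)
The plan is to reduce the statement to Proposition \bref{thm:cmtn_via_presn_and_cpnts}, applied this time inside the target theory $\U$. The forward implication is immediate: if $A$ commutes with $B$, then in particular $A(\mu)$ commutes with $B(\nu)$ for the special morphisms $\mu:T^j \rightarrow T$ in $\T$ and $\nu:S^k \rightarrow S$ in $\sS$. So the work is entirely in the converse.

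For the converse, I would assume that $A(\mu)$ commutes with $B(\nu)$ in $\U$ whenever $\mu:T^j \rightarrow T$ in $\T$ and $\nu:S^k \rightarrow S$ in $\sS$, and then fix arbitrary morphisms $\mu':T^j \rightarrow T^{j'}$ in $\T$ and $\nu':S^k \rightarrow S^{k'}$ in $\sS$, aiming to show $A(\mu')$ commutes with $B(\nu')$ in $\U$. Write $(\mu'_u:T^j \rightarrow T)_{u=1}^{j'}$ and $(\nu'_s:S^k \rightarrow S)_{s=1}^{k'}$ for the families inducing $\mu'$ and $\nu'$, i.e.\ $\mu'_u = \pi_u \circ \mu'$ and $\nu'_s = \pi_s \circ \nu'$ for the designated projections of $\T$ and $\sS$. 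The key observation is that, since $A$ and $B$ are morphisms of Lawvere theories and hence strictly preserve the designated power projections \pbref{par:cat_th}, the families inducing $A(\mu'):U^j \rightarrow U^{j'}$ and $B(\nu'):U^k \rightarrow U^{k'}$ in $\U$ are precisely $(A(\mu'_u):U^j \rightarrow U)_{u=1}^{j'}$ and $(B(\nu'_s):U^k \rightarrow U)_{s=1}^{k'}$ — because $\pi_u \circ A(\mu') = A(\pi_u) \circ A(\mu') = A(\pi_u \circ \mu') = A(\mu'_u)$, and dually for $B$.

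With this in hand, I would invoke Proposition \bref{thm:cmtn_via_presn_and_cpnts} applied to the pair of morphisms $A(\mu')$ and $B(\nu')$ in $\U$: the equivalence of its conditions 1 and 3 says that $A(\mu')$ commutes with $B(\nu')$ if and only if each component of the inducing family of $A(\mu')$ commutes with each component of the inducing family of $B(\nu')$, i.e.\ if and only if $A(\mu'_u)$ commutes with $B(\nu'_s)$ for all $u,s$. Since every $\mu'_u$ has the form $T^j \rightarrow T$ and every $\nu'_s$ has the form $S^k \rightarrow S$, these commutations are exactly what the hypothesis provides, so $A(\mu')$ commutes with $B(\nu')$, completing the proof.

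The argument is purely formal, so I do not anticipate a genuine obstacle; the only point that needs a moment's care is the claim that a morphism of theories carries the family inducing $\mu'$ to the family inducing $A(\mu')$, which rests squarely on the fact that morphisms of Lawvere theories strictly preserve designated projections. Everything else is a bookkeeping application of \bref{thm:cmtn_via_presn_and_cpnts} transported into $\U$.
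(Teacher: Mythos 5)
Your proof is correct and follows exactly the route the paper takes: the paper's own (one-line) proof likewise invokes Proposition \bref{thm:cmtn_via_presn_and_cpnts} together with the fact that $A$ and $B$ strictly preserve the designated finite powers of $T$ and $S$. You have merely spelled out the bookkeeping that the paper leaves implicit, and your one point of care --- that a theory morphism carries the inducing family of $\mu'$ to that of $A(\mu')$ --- is precisely the content of the paper's parenthetical justification.
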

\begin{proof}
This follows from \bref{thm:cmtn_via_presn_and_cpnts}, since $A$ and $B$ strictly preserve the designated finite powers of $T$ and $S$, respectively.
\end{proof}

\begin{DefSub}
A morphism of Lawvere theories $A:\T \rightarrow \U$ is said to be \textbf{central} if it commutes with the identity morphism $1_\U:\U \rightarrow \U$.
\end{DefSub}

\begin{ExaSub}\label{exa:uniq_mor_from_initial_th_is_central}
Given a Lawvere theory $(\T,T)$, the unique morphism of Lawvere theories $T^{(-)}:\FinCard^\op \rightarrow \T$ is central.  Indeed, a morphism of the form $T^j \rightarrow T$ in $\T$ lies in the image of $T^{(-)}$ if and only if it is a projection $\pi_i:T^j \rightarrow T$, and given any operation $\nu:T^k \rightarrow T$ in $\T$, the diagram
$$
\xymatrix{
T^{j \times k} \ar[d]_{\pi_i * T^k} \ar[r]^{T^j * \nu} & *!<-3.5ex,0ex>{\;T^j = T^{j \times 1}} \ar[d]^{\pi_i}\\
T^k \ar[r]^\nu & T
}
$$
commutes since its left and right sides are equally the left $j$-th power projections $p^{(j,k)}_i$ and $p^{(j,1)}_i$ \pbref{par:bif_kp}, respectively.
\end{ExaSub}

\begin{PropSub}\label{thm:concrete_cmtnt}
Given a set of morphisms $\Omega \subseteq \mor\U$ in a Lawvere theory $(\U,U)$, the set of morphisms
$$\Omega^\perp = \{\mu \in \mor\U \mid \textnormal{$\mu$ commutes with every $\nu \in \Omega$}\}$$
is a concrete subtheory of $\U$ \pbref{par:cat_th}.
\end{PropSub}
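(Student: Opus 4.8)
The plan is to verify the three conditions characterizing a concrete subtheory of $(\U,U)$ recorded in \bref{par:cat_th}: that the set $\Omega^\perp \subseteq \mor\U$ (i)~contains every designated projection $\pi_i:U^n \to U$, (ii)~is closed under composition, and (iii)~contains the morphism $\omega:U^n \to U^m$ induced by any family $\{\omega_i:U^n \to U \mid i = 1,\dots,m\} \subseteq \Omega^\perp$. Since $\Omega^\perp$ is literally a subset of $\mor\U$, concreteness is then automatic. (One could also observe that $\Omega^\perp = \bigcap_{\nu \in \Omega}\{\nu\}^\perp$ and that an intersection of concrete subtheories is one, reducing to a single $\nu$, but this is not essential.)

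Conditions (i) and (iii) follow quickly from earlier results. For (i): by \bref{exa:uniq_mor_from_initial_th_is_central} the unique morphism $U^{(-)}:\FinCard^\op \to \U$ is central, i.e.\ commutes with $1_\U$; since every designated projection $\pi_i:U^n \to U$ lies in the image of $U^{(-)}$, it commutes with every morphism of $\U$, in particular with every $\nu \in \Omega$, so $\pi_i \in \Omega^\perp$. For (iii): writing $(\omega_i)_{i=1}^m$ for the inducing family of $\omega$ (these are exactly the components of $\omega$) and $(\nu_s)_s$ for the inducing family of a given $\nu \in \Omega$, \bref{thm:cmtn_via_presn_and_cpnts} says $\omega$ commutes with $\nu$ iff $\omega_i$ commutes with $\nu_s$ for all $i,s$; but each $\omega_i$ lies in $\Omega^\perp$, hence commutes with $\nu$, hence (by the same proposition, its reduction to components) with each $\nu_s$. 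So $\omega$ commutes with every $\nu \in \Omega$, i.e.\ $\omega \in \Omega^\perp$.

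The crux is condition (ii): given $\mu:U^j \to U^{j'}$ and $\mu':U^{j'} \to U^{j''}$ in $\Omega^\perp$, we must show $\mu'\mu$ commutes with each $\nu \in \Omega$. One clean route invokes the equivalence $1 \Leftrightarrow 2$ of \bref{thm:cmtn_via_presn_and_cpnts}: commutation of $\mu$ with $\nu$ says precisely that $\mu$, regarded as a map between the carriers of the $\U$-algebras $A = U^j * (-)$ and $B = U^{j'} * (-)$ in $\U$, \emph{preserves} $\nu$, and similarly $\mu'$ preserves $\nu$ relative to $B$ and $C = U^{j''} * (-)$; horizontally pasting the two preservation squares, using functoriality of $(-) * U^k$, shows $\mu'\mu$ preserves $\nu$ relative to $A$ and $C$, whence $\mu'\mu$ commutes with $\nu$. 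Alternatively one argues directly with Kronecker products: reducing to an operation $\nu:U^k \to U$ via \bref{thm:cmtn_via_presn_and_cpnts}, one factors $(\mu'\mu) * \nu = (U^{j''} * \nu)\circ(\mu' * U^k)\circ(\mu * U^k)$ using functoriality of $(-) * U^k$, rewrites $(U^{j''} * \nu)\circ(\mu' * U^k) = \mu'\circ(U^{j'} * \nu)$ (this is exactly the hypothesis that $\mu'$ commutes with $\nu$), and then $(U^{j'} * \nu)\circ(\mu * U^k) = \mu*\nu = \mu\stt\nu = \mu\circ(U^j * \nu)$ (that $\mu$ commutes with $\nu$), obtaining $(\mu'\mu)*\nu = \mu'\mu\circ(U^j * \nu) = (\mu'\mu)\stt\nu$. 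The main obstacle is precisely this closure under composition, because the Kronecker product $*$ is \emph{not} a bifunctor on $\U$: one cannot merely invoke ``functoriality in one variable'' to transport the commutation hypothesis along a composite, and the required naturality square is supplied only by the commutation of the middle factor $\mu'$ with $\nu$; equivalently, in the pasting argument the point is that ``preserves $\nu$'' is the commutativity of a \emph{single} square and so is stable under horizontal composition, even though stronger manipulations would demand full $\T$-homomorphism-hood. Conditions (i) and (iii) present no such difficulty.
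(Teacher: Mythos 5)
Your proof is correct and follows essentially the same route as the paper: projections are handled via the centrality of $\FinCard^\op \to \U$ \pbref{exa:uniq_mor_from_initial_th_is_central}, closure under tupling via \bref{thm:cmtn_via_presn_and_cpnts}, and closure under composition via the functoriality of $(-) * U^k$ (the paper states this last step in one sentence; both of your arguments --- pasting preservation squares and the direct Kronecker-product computation --- are correct expansions of exactly that sentence).
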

\begin{proof}
Using the functoriality of $(-) * U^k:\U \rightarrow \U$ for each $k \in \NN$, one computes straightforwardly that $\Omega^\perp$ is closed under composition in $\U$.  By \bref{exa:uniq_mor_from_initial_th_is_central}, $\Omega^\perp$ contains all the projections $\pi_i:U^j \rightarrow U$.  Lastly, given a family of morphisms $\{\mu_i:U^j \rightarrow U \mid i = 1,...,j'\} \subseteq \Omega^\perp$, the induced morphism $\mu:U^j \rightarrow U^{j'}$ lies in $\Omega^\perp$ by \bref{thm:cmtn_via_presn_and_cpnts}.
\end{proof}

\begin{DefSub}\label{def:cmtnt}
Let $\U$ be a Lawvere theory.
\begin{enumerate}
\item Given a set of morphisms $\Omega \subseteq \mor\U$, we call the subtheory $\Omega^\perp \hookrightarrow \U$ of \bref{thm:concrete_cmtnt} the \textbf{commutant} of $\Omega$ (in $\U$).
\item Given a morphism of Lawvere theories $A:\T \rightarrow \U$, the \textbf{commutant} $\T^\perp_A$ of $A$ (or of $\T$ with respect to $A$) is defined as the commutant of the image $A(\mor\T) \subseteq \mor\U$.
\item Given a Lawvere theory $\T$ over $\U$, the \textbf{commutant} $\T^\perp$ of $\T$ is defined as the commutant of the associated morphism $\T \rightarrow \U$.
\item The \textbf{commutant} of a subtheory $\T \hookrightarrow \U$ is defined as the commutant $\T^\perp$ of $\T$, considered as a theory over $\U$.  Equivalently, $\T^\perp$ is the commutant of $\mor\T \subseteq \mor\U$.
\end{enumerate}
\end{DefSub}

The following is immediate from the definitions:

\begin{PropSub}\label{thm:univ_prop_cmtnt}
Let $A:\T \rightarrow \U$ and $B:\sS \rightarrow \U$ be morphisms of Lawvere theories.  Then $A$ and $B$ commute if and only if $B$ factors through the commutant $\T^\perp_A \hookrightarrow \U$ of $A$.
\end{PropSub}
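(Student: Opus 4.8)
The plan is to unwind both sides of the claimed equivalence directly against Definitions \ref{def:cmtn_mor_th} and \ref{def:cmtnt}, using the symmetry of the commutation relation from \ref{prop:rel_kps}. First I would replace the subtheory $\T^\perp_A \hookrightarrow \U$ by the isomorphic concrete subtheory whose morphism-set is the set $\Omega^\perp \subseteq \mor\U$ of \ref{thm:concrete_cmtnt}, where $\Omega = A(\mor\T)$. With this replacement, ``$B$ factors through $\T^\perp_A$'' becomes the purely set-theoretic condition $B(\mor\sS) \subseteq \Omega^\perp$. Here I would note that a morphism of Lawvere theories whose image lands inside a concrete subtheory automatically corestricts to a morphism of theories into that subtheory: the inclusion $\Omega^\perp \hookrightarrow \U$ is identity-on-objects and faithful \pbref{par:cat_th}, so the corestriction is a well-defined functor, and it strictly preserves the designated power projections because $B$ does and those projections lie in $\Omega^\perp$ \pbref{exa:uniq_mor_from_initial_th_is_central}; conversely any factorization restricts to such a containment of images. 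So the factorization statement is genuinely equivalent to $B(\mor\sS) \subseteq \Omega^\perp$.

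Next I would observe that $B(\mor\sS) \subseteq \Omega^\perp$ says, by the definition of $\Omega^\perp$, that every morphism $B(\nu)$ with $\nu \in \mor\sS$ commutes in $\U$ with every morphism in $\Omega = A(\mor\T)$, i.e. that $B(\nu)$ commutes with $A(\mu)$ for all $\nu \in \mor\sS$ and all $\mu \in \mor\T$. By \ref{prop:rel_kps} the relation ``commutes with'' on morphisms of $\U$ is symmetric, so this is equivalent to requiring that $A(\mu)$ commutes with $B(\nu)$ for all $\mu \in \mor\T$ and $\nu \in \mor\sS$, which is precisely the definition \pbref{def:cmtn_mor_th} of $A$ commuting with $B$. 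Chaining these equivalences yields the proposition.

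I do not expect a real obstacle; the only point requiring a moment's care is the equivalence between factoring a morphism of Lawvere theories through a concrete subtheory and a containment of images, which rests on the identity-on-objects, faithful, projection-preserving nature of subtheory embeddings recorded in \ref{par:cat_th} and \ref{par:talgs}. Everything else is a direct translation through the definitions, in keeping with the remark preceding the statement.
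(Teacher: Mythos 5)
Your proof is correct and is essentially the paper's own argument: the paper dismisses this proposition as ``immediate from the definitions,'' and your careful unwinding -- reducing factorization through the concrete subtheory $\Omega^\perp$ with $\Omega = A(\mor\T)$ to the containment $B(\mor\sS) \subseteq \Omega^\perp$, then invoking the symmetry of commutation from \bref{prop:rel_kps} -- is exactly the intended chain of equivalences. The one point you rightly flag as needing care (that a containment of images yields an actual morphism of theories into the concrete subtheory) is handled correctly via \bref{par:cat_th} and \bref{thm:concrete_cmtnt}.
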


\begin{RemSub}
By \bref{thm:univ_prop_cmtnt}, the commutant $\T^\perp$ of a theory $\T$ over $\U$ is characterized, up to isomorphism, by a universal property.  Hence we will sometimes also call any theory over $\U$ isomorphic to $\T^\perp$ \textbf{the commutant} of $\T$.
\end{RemSub}

\begin{ThmSub}\label{thm:cmtnt_full_th_alg}
Given a morphism of Lawvere theories $A:(\T,T) \rightarrow (\U,U)$, let us regard $A$ as a $\T$-algebra in $\U$.  Then the commutant $\T^\perp_A$ of $A$ is isomorphic to the full finitary theory of $A$ in the category $\Alg{\T}_\U$ of $\T$-algebras in $\U$.  In symbols,
$$\T^\perp_A \cong (\Alg{\T}_\U)_A$$
as theories over $\U$.  Further, we can choose standard designated finite powers in $\Alg{\T}_\U$ in such a way that this isomorphism is an identity.
\end{ThmSub}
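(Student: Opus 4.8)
The plan is to realize both $\T^\perp_A$ and $(\Alg{\T}_\U)_A$ as one and the same concrete subtheory of $\U$. The key device is the $\U$-algebra $U^n * (-) : \U \to \U$ in $\U$, which has carrier $U^n$ (this is \bref{thm:cmtn_via_presn_and_cpnts} applied with $\U$ in place of $\T$). I would declare the designated $n$-th power of the object $A$ of $\Alg{\T}_\U$ to be the composite $\T$-algebra
$$A^n \;:=\; \bigl(U^n * (-)\bigr) \circ A \;:\; \T \longrightarrow \U,$$
which has carrier $U^n * U = U^n$. First I would check that this is a legitimate \emph{standard} choice of finite powers: $A^1 = A$ with identity projection, since $U * (-) = \id_\U$ by our convention $1 \times k = k$ in \bref{par:binary_prods_in_fincard}; and $A^n$ is genuinely an $n$-th power of $A = A^1$ in $\Alg{\T}_\U$, because finite powers there are computed pointwise and, using the left $n$-th powers of \bref{par:bif_kp}, the pointwise $n$-th power of $A$ sends $T^k \mapsto \bigl(A(T^k)\bigr)^n = U^{n \times k} = U^n * A(T^k)$, in agreement with $A^n$. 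With this choice in place, $(\Alg{\T}_\U)_A(n,m) = \Alg{\T}_\U(A^n, A^m)$, which under the faithful functor $\Ev_T$ \pbref{par:cat_talgs} is the set of those $\mu : U^n \to U^m$ in $\U$ that are $\T$-homomorphisms $A^n \to A^m$; whereas $\T^\perp_A(n,m)$ is by definition the set of those $\mu : U^n \to U^m$ that commute with $A(\nu)$ for every $\nu \in \mor\T$. So the theorem comes down to showing these two subsets of $\U(U^n, U^m)$ coincide.

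To compare them I would reduce each membership condition to operations of the form $T^j \to T$. On the $\T$-homomorphism side this is the last assertion of \bref{par:cat_talgs}: $\mu$ is a $\T$-homomorphism $A^n \to A^m$ iff $\mu$ preserves every $\omega : T^j \to T$ in $\T$ relative to $A^n$ and $A^m$. On the commutant side, $\mu$ commutes with $A(\nu)$ for every $\nu \in \mor\T$ iff $\mu$ commutes with $A(\omega)$ for every $\omega : T^j \to T$ in $\T$; the nontrivial implication uses that $A$ strictly preserves power projections, so that an arbitrary $A(\nu) : U^j \to U^{j'}$ is induced by the family of components $A(\nu_s) : U^j \to U$, together with the equivalence $(1) \Leftrightarrow (4)$ of \bref{thm:cmtn_via_presn_and_cpnts} applied in $\U$ to the pair $\bigl(\mu, A(\nu)\bigr)$. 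Thus everything is reduced to the single equivalence, for each $\omega : T^j \to T$ in $\T$:
$$\mu \text{ commutes with } A(\omega) \quad\Longleftrightarrow\quad \mu \text{ preserves } \omega \text{ relative to } A^n \text{ and } A^m.$$

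To establish this last equivalence I would apply $(1) \Leftrightarrow (2)$ of \bref{thm:cmtn_via_presn_and_cpnts} \emph{in the theory $\U$}, taking the two morphisms there to be $\mu : U^n \to U^m$ and $A(\omega) : U^j \to U$; the two $\U$-algebras the proposition attaches to these are then exactly $U^n * (-)$ and $U^m * (-)$, so the proposition gives: $\mu$ commutes with $A(\omega)$ iff $\mu$ preserves $A(\omega)$ relative to $U^n * (-)$ and $U^m * (-)$. It then only remains to observe that the commuting square witnessing ``$\mu$ preserves $A(\omega) : U^j \to U$ relative to $U^n * (-)$ and $U^m * (-)$'' is \emph{literally the same square} as the one witnessing ``$\mu$ preserves $\omega : T^j \to T$ relative to $A^n = (U^n * (-)) \circ A$ and $A^m = (U^m * (-)) \circ A$'', because $A^n(T^j) = U^n * A(T^j) = U^n * U^j$, $A^n(\omega) = U^n * A(\omega)$, and similarly for $A^m$. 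Combining these steps yields the asserted equality of subsets; since all the bijections involved respect composition (on each side composition is just composition in $\U$, by functoriality of $\Ev_T$) and the designated power projections, the canonical theory morphism $(\Alg{\T}_\U)_A \rightarrowtail \Alg{\T}_\U \xrightarrow{\Ev_T} \U$ is faithful with image precisely $\T^\perp_A$, so $(\Alg{\T}_\U)_A$ and $\T^\perp_A$ are the very same concrete subtheory of $\U$ and the stated isomorphism is an identity of theories over $\U$. I expect the only real obstacle to be the bookkeeping around \bref{par:binary_prods_in_fincard}--\bref{par:bif_kp}: one must pin down the left-versus-right power conventions precisely enough that ``the $n$-th power of $A$ in $\Alg{\T}_\U$'' and ``$U^n * (-)$ precomposed with $A$'' agree on the nose---as objects, as power cones, and as functors on morphisms---so that the two squares above are genuinely equal rather than merely corresponding under a symmetry isomorphism.
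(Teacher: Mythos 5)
Your proposal is correct and follows essentially the same route as the paper's own proof: both take the designated $n$-th power of $A$ in $\Alg{\T}_\U$ to be $(U^n * (-)) \circ A$ (the pointwise left powers), identify $(\Alg{\T}_\U)_A(n,m)$ via $\Ev_T$ with the set of $\mu : U^n \to U^m$ preserving every operation of $\T$, and then observe that the square expressing ``$\mu$ preserves $\nu$ relative to $A^n$ and $A^m$'' is literally the square expressing ``$\mu$ commutes with $A(\nu)$''. The only cosmetic difference is that you route the last step through the reduction to operations $T^j \to T$ and the equivalence $(1)\Leftrightarrow(2)$ of \bref{thm:cmtn_via_presn_and_cpnts} applied in $\U$, whereas the paper writes down the commuting square for a general $\nu : T^k \to T^{k'}$ directly; both amount to the same computation.
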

\begin{proof}
Given an arbitrary object $U^k$ of $\U$, note that the left $j$-th powers $(U^k)^j = U^{j \times k}$ of $U^k$ $(j \in \NN)$ are standard \pbref{par:lth}, and for $k = 1$ these are precisely the designated $j$-th powers $U^j$ of $U$ in $\U$.  Let us now use these finite powers as our designated finite powers in $\U$ \pbref{par:lth}, calling them the \textit{left finite powers}.  The resultant pointwise finite powers in $\Alg{\T}_\U$ are standard, and we shall use them in forming the full finitary theory $(\Alg{\T}_\U)_A$ of $A$ in $\Alg{\T}_\U$.  Observe that for each $j \in \NN$, the designated $j$-th power $A^j$ of $A$ in $\Alg{\T}_\U$ is therefore the composite
$$\T \xrightarrow{A} \U \xrightarrow{U^j * (-)} \U$$
whose second factor is the endofunctor of $\U$ induced by the left $j$-th powers in $\U$ \pbref{par:bif_kp}.  In particular, $A^j$ has carrier $U^j$ and is given on objects by $A^j(T^k) = U^{j \times k}$.

The faithful functor $\ca{\text{$-$}}:\Alg{\T}_\U \rightarrow \U$ strictly preserves the designated finite powers and so induces a subtheory embedding $(\Alg{\T}_\U)_A \hookrightarrow \U_{|A|} = \U_{U} = \U$, and we shall now show that this is precisely the subtheory inclusion $\T^\perp \hookrightarrow \U$.  Fix a pair of natural numbers $j,j'$.  Per \bref{par:cat_talgs}, we have identified $\T$-homorphisms $A^j \rightarrow A^{j'}$ with certain morphisms $\mu:\ca{A^j} \rightarrow \ca{A^{j'}}$ in $\U$ between the carriers of the $\T$-algebras $A^j$ and $A^{j'}$, namely those $\mu$ that preserve each operation $\nu:T^k \rightarrow T^{k'}$ in $\T$.  But by using preceding description of the designated powers $A^j$ and $A^{j'}$, we find that $\mu$ preserves $\nu$ iff the following diagram commutes
$$
\xymatrix{
U^{j \times k} \ar[d]_{U^j \;*\; A(\nu)} \ar[r]^{\mu * U^k} & U^{j' \times k} \ar[d]^{U^{j'} * \;A(\nu)}\\
U^{j \times k'} \ar[r]_{\mu * U^{k'}} & U^{j' \times k'}
}
$$
i.e. iff $\mu:U^j \rightarrow U^{j'}$ commutes with $A(\nu):U^k \rightarrow U^{k'}$.
\end{proof}

\begin{DefSub}
Given a $\T$-algebra $A:\T \rightarrow \C$ for a Lawvere theory $\T$, the \textbf{commutant} $\T^\perp_A$ of $A$ (or of $\T$ with respect to $A$) is defined as the commutant of the associated morphism of theories $A':\T \rightarrow \C_A$, where $\C_A$ is the full finitary theory of $A$ in $\C$ \pbref{def:full_theory}.  By \bref{thm:cmtnt_full_th_alg} we obtain the following equivalent definition:
\end{DefSub}

\begin{CorSub}\label{thm:charn_cmtnt_talg}
The commutant $\T^\perp_A$ of a $\T$-algebra $A:\T \rightarrow \C$ is the full finitary theory of $A$ in the category of $\T$-algebras in $\C$, i.e. $\T^\perp_A \cong (\Alg{\T}_\C)_A$ as theories over $\C_A$.  For suitable choices of finite powers $A^n$ in $\Alg{\T}_\C$, this isomorphism is an identity, so that $\T^\perp_A(n,m) = \Alg{\T}_\C(A^n,A^m)$ for all $n,m \in \NN$.
\end{CorSub}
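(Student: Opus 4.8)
The plan is to obtain the corollary as essentially a restatement of Theorem~\bref{thm:cmtnt_full_th_alg}. By the definition immediately preceding the statement, the commutant of the $\T$-algebra $A:\T\to\C$ is \emph{by definition} the commutant $\T^\perp_{A'}$ of the morphism of Lawvere theories $A':\T\to\C_A$ obtained by factoring $A$ through the full finitary theory $\C_A$ of its carrier \bref{def:full_theory}. Since $\C_A$ is a Lawvere theory, Theorem~\bref{thm:cmtnt_full_th_alg} applies to $A'$ and gives $\T^\perp_{A'}\cong(\Alg{\T}_{\C_A})_{A'}$ as theories over $\C_A$; moreover, taking the left finite powers in $\C_A$ and the associated pointwise powers $(A')^n$ in $\Alg{\T}_{\C_A}$ exactly as in the proof of that theorem, this isomorphism is an identity. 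So it only remains to identify $(\Alg{\T}_{\C_A})_{A'}$ with $(\Alg{\T}_\C)_A$ as subtheories of $\C_A$.

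For this I would use that the canonical functor $\C_A\hookrightarrow\C$ of \bref{def:full_theory} is identity-on-homs --- hence fully faithful --- and preserves finite powers; consequently post-composition with it induces a fully faithful, finite-power-preserving functor $F:\Alg{\T}_{\C_A}\to\Alg{\T}_\C$, which sends $A'$ to $A$. Declaring the designated $n$-th power $A^n$ of $A$ in $\Alg{\T}_\C$ to be the image under $F$ of the power $(A')^n$ chosen above (an $n$-th power of $A$, as $F$ preserves finite powers), full faithfulness of $F$ yields a bijection
$$(\Alg{\T}_\C)_A(n,m)=\Alg{\T}_\C(A^n,A^m)\xrightarrow{\sim}\Alg{\T}_{\C_A}((A')^n,(A')^m)=(\Alg{\T}_{\C_A})_{A'}(n,m)$$
which, viewing both sides inside $\C_A(n,m)$ via the respective subtheory inclusions, is the identity. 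Hence $(\Alg{\T}_\C)_A$ and $(\Alg{\T}_{\C_A})_{A'}$ are the very same subtheory of $\C_A$.

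Concretely --- and this is all that is really going on --- a morphism $\mu:\ca{A}^n\to\ca{A}^m$ in $\C$ is a $\T$-homomorphism $A^n\to A^m$, i.e.\ lies in $(\Alg{\T}_\C)_A(n,m)$, exactly when it preserves every operation $\nu:T^k\to T^{k'}$ of $\T$ relative to $A^n$ and $A^m$ in the sense of \bref{par:cat_talgs}; and since $A'(\nu)$ is literally $A(\nu)$ under $\C_A\hookrightarrow\C$ and composition in $\C_A$ agrees with that in $\C$, this is word for word the condition that $\mu$ lie in $(\Alg{\T}_{\C_A})_{A'}(n,m)$. Combining this with $\T^\perp_A=\T^\perp_{A'}=(\Alg{\T}_{\C_A})_{A'}$ from Theorem~\bref{thm:cmtnt_full_th_alg} gives $\T^\perp_A=(\Alg{\T}_\C)_A$ on the nose for these choices of powers, whence $\T^\perp_A(n,m)=\Alg{\T}_\C(A^n,A^m)$. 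The only point requiring care --- and it is bookkeeping, not a genuine obstacle --- is pinning down the powers $A^n$ in $\Alg{\T}_\C$ to be precisely the ``left'' ones that match the choice made in the proof of Theorem~\bref{thm:cmtnt_full_th_alg}, so that the asserted isomorphism is an honest identity rather than merely an isomorphism; the substantive work is already contained in \bref{thm:cmtnt_full_th_alg}.
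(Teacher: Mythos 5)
Your proposal is correct and follows essentially the same route as the paper's own proof: apply Theorem \bref{thm:cmtnt_full_th_alg} to the morphism $A':\T\rightarrow\C_A$, then use the fully faithful, finite-power-preserving inclusion $\C_A\rightarrowtail\C$ to transport the left powers of $A'$ in $\Alg{\T}_{\C_A}$ to designated powers of $A$ in $\Alg{\T}_\C$ and identify $(\Alg{\T}_{\C_A})_{A'}$ with $(\Alg{\T}_\C)_A$ as concrete subtheories of $\C_A$, via the convention of \eqref{eq:f_pres_mu}. Nothing essential is missing.
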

\begin{proof}
This is verified straightforwardly by applying \bref{thm:cmtnt_full_th_alg} and using the fact that the canonical functor $\iota:\C_A \rightarrow \C$ is fully faithful and preserves finite powers \pbref{def:full_theory}.  With reference to the proof of \bref{thm:cmtnt_full_th_alg}, the pointwise \textit{left} finite powers of $A':\T \rightarrow \C_A$ in $\Alg{\T}_{\C_A}$ induce standard designated finite powers of $A$ in $\Alg{\T}_\C$ by composition with $\iota$.  Employing these in forming $(\Alg{\T}_\C)_A$, we find that $\iota$ induces an isomorphism of theories $(\Alg{\T}_{\C_A})_{A'} \cong (\Alg{\T}_\C)_A$ over $\C_A$, but the convention of \eqref{eq:f_pres_mu} entails that the associated morphisms to $\C_A$ are concrete subtheory embeddings, so the latter isomorphism is an equality of concrete subtheories. 
\end{proof}

\begin{RemSub}
In the case where $\C = \Set$, the commutant $\T^\perp_A$ of a $\T$-algebra $A:\T \rightarrow \Set$ is (by \bref{thm:charn_cmtnt_talg}) an instance of Lawvere's notion of the \textbf{algebraic structure} of a $\Set$-valued functor \cite[III.1]{Law:PhD}.
\end{RemSub}

\begin{ExaSub}[\textbf{The Lawvere theory of $R$-modules}]\label{exa:cmtnt_lth_rmods}
Letting $R$ be a ring or rig, recall that left $R$-modules are the same as normal $\T$-algebras for the Lawvere theory $\T = \Mat_R$ \pbref{exa:law_th_rmods}.  In particular, $R$ is a left $R$-module and so determines a morphism $R:\T \rightarrow \Set_R$ into the full finitary theory $\Set_R$ of $R$ in $\Set$.  Thus regarding $\T$ as a theory over $\Set_R$, its commutant $\T^\perp$ has $\T^\perp(n,m) = \Mod{R}(R^n,R^m)$ by \bref{thm:charn_cmtnt_talg} once we identify $\Mod{R}$ with the isomorphic category $\Alg{\T}^!$.  Moreover, we have an identity-on-homs functor $\T^\perp \rightarrow \Mod{R}$ given on objects by $n \mapsto R^n$.

Let us first observe that $\T^\perp \cong \T^\op$ as categories.  Indeed, as noted in \bref{par:alg_cats} we have a fully faithful functor $y:\T^\op \rightarrowtail \Mod{R}$ sending $n \in \ob\T = \NN$ to the free $R$-module $R^n$.  The constituent isomorphisms $R^{m \times n} = \T(n,m) \cong \Mod{R}(R^m,R^n)$ send each $m \times n$-matrix $u$ to the left $R$-linear map $R^m \rightarrow R^n$ given by \textit{right} multiplication by $u$, i.e., we regard $R^m$ and $R^n$ as sets of \textit{row vectors} so that the associated map $R^{1 \times m} \rightarrow R^{1 \times n}$ is given by $x \mapsto xu$.

Next observe that the identity-on-objects functor $(\Mat_R)^\op \rightarrow \Mat_{R^\op}$ given by transposition is an isomorphism of categories
$$(\Mat_R)^\op \cong \Mat_{R^\op}$$
so that
$$\T^\perp \cong \T^\op \cong \Mat_{R^\op}$$
as categories.  The composite isomorphism $\Mat_{R^\op} \rightarrow \T^\perp$ commutes with the associated morphisms to $\Set_R = \Set_{R^\op}$, as it associates to an $m \times n$-matrix $u$ over $R^\op$ the (left $R$-linear) map $R^n \rightarrow R^m$ given by $x \mapsto ux$ when we regard each $x \in R^n$ as a column vector with entries in $R^\op$.  We thus obtain the following, recalling that left $R^\op$-modules are the same as right $R$-modules.
\end{ExaSub}

\begin{ThmSub}\label{thm:cmtnt_lth_rmods}
Let $R$ be a ring or rig.  Then the commutant $(\Mat_R)^\perp$ with respect to $R$ of the theory of left $R$-modules $\Mat_R$ is the Lawvere theory of right $R$-modules $\Mat_{R^\op}$.  Indeed, we have an isomorphism
$$(\Mat_R)^\perp \cong \Mat_{R^\op}$$
in the category of Lawvere theories over $\Set_R$.
\end{ThmSub}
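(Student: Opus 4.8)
The plan is to deduce everything from Corollary~\ref{thm:charn_cmtnt_talg}, which already supplies the conceptual content, and then carry out the (purely bookkeeping) identification of the resulting full finitary theory with $\Mat_{R^\op}$. First I would apply Corollary~\ref{thm:charn_cmtnt_talg} to the $\Mat_R$-algebra $R:\Mat_R \to \Set$, i.e. to the regular representation of $R$ as a free left $R$-module on one generator. This gives, for suitable designated finite powers, an identity $(\Mat_R)^\perp(n,m) = \Alg{\Mat_R}_{\Set}(R^n,R^m)$; identifying $\Alg{\Mat_R}^!$ with $\Mod{R}$ (Example~\ref{exa:law_th_rmods}), and noting that the designated $n$-th power $R^n$ of $R$ in $\Alg{\Mat_R}$ is exactly the free left $R$-module on $n$ generators, we obtain $(\Mat_R)^\perp(n,m) = \Mod{R}(R^n,R^m)$ together with an identity-on-homs functor $(\Mat_R)^\perp \to \Mod{R}$ sending $n$ to $R^n$. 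So the commutant is, as a theory over $\Set_R$, the full subcategory of $\Mod{R}$ spanned by the free modules, with its canonical embedding into $\Set_R$.

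Next I would identify this with $\Mat_{R^\op}$. As recalled in~\ref{par:alg_cats}, there is a fully faithful functor $\Mat_R^\op \rightarrowtail \Mod{R}$ sending $n$ to $R^n$, whose image is precisely the full subcategory on the free modules; the constituent isomorphism $\Mat_R(n,m) = R^{m\times n} \cong \Mod{R}(R^m,R^n)$ sends a matrix $u$ to the left $R$-linear map given by \emph{right} multiplication by $u$ on row vectors. (This is where the left/right asymmetry enters: right multiplication by an $R$-matrix is left $R$-linear, whereas left multiplication is not, unless $R$ is commutative.) Hence $(\Mat_R)^\perp \cong \Mat_R^\op$ as categories. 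Composing with the transposition isomorphism $\Mat_R^\op \cong \Mat_{R^\op}$, which is the identity on objects and exploits both $(AB)^T = B^T A^T$ and the passage $R \rightsquigarrow R^\op$, one gets an isomorphism of categories $\Mat_{R^\op} \xrightarrow{\sim} (\Mat_R)^\perp$ that carries an $m\times n$-matrix $u$ over $R^\op$ to the left $R$-linear map $R^n \to R^m$, $x \mapsto ux$, where $x$ is regarded as a column vector with entries in $R^\op$ (equivalently in $R$).

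Finally I would verify that this isomorphism of categories is in fact an isomorphism in the slice category over $\Set_R = \Set_{R^\op}$, i.e. an isomorphism of Lawvere theories: it is identity on objects, and it sends the designated projection $\pi_i$ of $\Mat_{R^\op}$ — the standard basis row vector — to the map $x \mapsto e_i x = x_i$, which is the designated projection of $\Set_R$; thus it strictly preserves the designated $n$-th power projections, hence is a morphism of theories, and is automatically compatible with the canonical morphisms from $\Mat_{R^\op}$ and $(\Mat_R)^\perp$ into $\Set_R$. There is no genuine obstacle here; the only thing that requires care is keeping the four conventions — row versus column vectors, transposition, the switch to $R^\op$, and "$f$ then $g$" versus "$g\circ f$" — mutually consistent. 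Once a single choice is fixed, as in Example~\ref{exa:cmtnt_lth_rmods}, the argument above is routine, and the theorem follows.
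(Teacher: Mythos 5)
Your proposal is correct and follows essentially the same route as the paper's own proof (given in Example \bref{exa:cmtnt_lth_rmods}): apply Corollary \bref{thm:charn_cmtnt_talg} to the left $R$-module $R$ to identify $(\Mat_R)^\perp(n,m)$ with $\Mod{R}(R^n,R^m)$, recognize the full subcategory of $\Mod{R}$ on the free modules $R^n$ as $(\Mat_R)^\op$ via right multiplication of row vectors by matrices, compose with the transposition isomorphism $(\Mat_R)^\op \cong \Mat_{R^\op}$, and check compatibility with the structural morphisms to $\Set_R$. The bookkeeping of conventions you flag at the end is exactly the content the paper records, so no further comment is needed.
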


\section{Saturated and balanced subtheories}\label{sec:sat_bal}

\begin{PropSub}\label{thm:cmtnt_adjn}
Let $\U$ be a Lawvere theory.
\begin{enumerate}
\item Lawvere theories $\T$ and $\sS$ over $\U$ commute if and only if there exists a (necessarily unique) morphism $\sS \rightarrow \T^\perp$ in the category of Lawvere theories over $\U$.
\item There is a unique functor $(-)^\perp:(\Th\slash \U)^\op \rightarrow \Th\slash \U$ sending each theory $\T$ over $\U$ to its commutant $\T^\perp$.  
\item The functor $(-)^\perp$ in 2 is right adjoint to its formal dual $(-)^\perp:\Th\slash \U \rightarrow (\Th\slash \U)^\op$.
\item The adjunction in 3 restricts to a Galois connection on the preordered set $\SubTh(\U)$ of subtheories of $\U$ \pbref{par:cat_th}, i.e., an adjunction between $\SubTh(\U)$ and its opposite.
\end{enumerate}
\end{PropSub}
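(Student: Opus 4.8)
The plan is to derive all four parts formally from the universal property of the commutant \pbref{thm:univ_prop_cmtnt}, the fact that a subtheory inclusion into $\U$ is faithful, and the symmetry of commutation \pbref{prop:rel_kps}. For part 1: by \pbref{thm:univ_prop_cmtnt}, if $A:\T \rightarrow \U$ and $B:\sS \rightarrow \U$ are the structure morphisms, then $A$ and $B$ commute if and only if $B$ factors through the subtheory $\T^\perp \hookrightarrow \U$; and since this inclusion is faithful \pbref{thm:concrete_cmtnt}, any such factorization $\sS \rightarrow \T^\perp$ is unique, and is exactly a morphism in $\Th\slash\U$. Moreover \pbref{prop:rel_kps} shows commutation of morphisms is symmetric, hence so is the relation of commutation between theories over $\U$ \pbref{def:cmtn_mor_th}; I record this for use below.

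For part 2, I would define $(-)^\perp$ on a morphism $f:\sS \rightarrow \T$ over $\U$ by observing that, since $f$ lies over $\U$, the image of $\mor\sS$ in $\mor\U$ is contained in that of $\mor\T$; hence every morphism of $\U$ commuting with all of the latter also commutes with all of the former, so $\mor(\T^\perp) \subseteq \mor(\sS^\perp)$ as subsets of $\mor\U$ \pbref{def:cmtnt}, and the induced concrete subtheory inclusion $\T^\perp \hookrightarrow \sS^\perp$ is the required morphism over $\U$. Functoriality (preservation of identities and composites) is automatic, because any two morphisms between subtheories of $\U$ that lie over $\U$ coincide; and by the same token the functor is unique, its object part being prescribed as $\T \mapsto \T^\perp$ and its arrow part forced.

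Part 3 reduces to a natural bijection
$$(\Th\slash\U)(\sS,\T^\perp) \;\cong\; (\Th\slash\U)(\T,\sS^\perp)$$
natural in $\T \in \Th\slash\U$ and $\sS \in (\Th\slash\U)^\op$, since the left-hand side is $(\Th\slash\U)^\op(\T^\perp,\sS)$. Each of these hom-sets has at most one element, its target being a subtheory of $\U$; and by part 1 each is nonempty precisely when $\T$ and $\sS$ commute, a condition symmetric in $\T$ and $\sS$ by the remark above. Hence the two hom-sets are canonically isomorphic (both empty, or both a singleton containing the unique lift), and every naturality square commutes trivially because all its vertices have at most one element. This gives the asserted adjunction; its unit and counit are the canonical morphisms $\T \rightarrow \T^{\perp\perp}$ over $\U$, which exist by part 1 because $\T^\perp$ commutes with $\T$.

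For part 4, note that $(-)^\perp$ sends subtheories of $\U$ to subtheories of $\U$ \pbref{thm:concrete_cmtnt} and is inclusion-reversing on the preordered set $\SubTh(\U)$ \pbref{par:cat_th}, so it restricts to an order-reversing map $\SubTh(\U) \rightarrow \SubTh(\U)$; since a Galois connection is just an adjunction between preorders, it remains only to check that for subtheories $\T,\sS \hookrightarrow \U$ one has $\T \subseteq \sS^\perp$ iff $\sS \subseteq \T^\perp$, which is immediate from part 1 (each side says there is a morphism between the relevant subtheories over $\U$, equivalently that $\T$ and $\sS$ commute). I do not expect any genuine obstacle here; the only point demanding a little care is the repeated appeal to uniqueness of factorizations through faithful subtheory inclusions, which is precisely what makes the bookkeeping collapse, and no computation with Kronecker products beyond the symmetry already furnished by \pbref{prop:rel_kps} is needed.
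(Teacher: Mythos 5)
Your proof is correct and follows essentially the same route as the paper: part 1 from the universal property \pbref{thm:univ_prop_cmtnt} together with faithfulness of the subtheory inclusion $\T^\perp \hookrightarrow \U$, and parts 2--4 by exploiting that hom-sets into subtheories over $\U$ have at most one element, with the symmetry of commutation from \pbref{prop:rel_kps}. The only (immaterial) difference is in part 2, where you obtain the arrow part $\T^\perp \hookrightarrow \sS^\perp$ directly from the containment of images in $\mor\U$, whereas the paper composes $M:\sS \rightarrow \T$ with the canonical morphism $\T \rightarrow \T^{\perp\perp}$ and applies part 1 twice; both arguments rest on the same facts.
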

\begin{proof}
1 follows from \bref{thm:univ_prop_cmtnt}.  Every theory $\T$ over $\U$ commutes with its commutant $\T^\perp$, so by 1 there is a unique morphism $\T \rightarrow \T^{\perp\perp}$ in $\Th\slash\U$.  Given a morphism $M:\sS \rightarrow \T$ in $\Th\slash \U$, we obtain a composite morphism $\sS \xrightarrow{M} \T \rightarrow \T^{\perp\perp}$, so by $1$ $\T^\perp$ and $\sS$ commute and hence there is a unique morphism $\T^\perp \rightarrow \sS^\perp$ in $\Th\slash \U$ and 2 follows.  3 and 4 now follow readily.
\end{proof}

\begin{DefSub}\label{def:sat_bal}\emptybox
\begin{enumerate}
\item Given a Lawvere theory $\T$ over $\U$, we say that
\begin{enumerate}
\item $\T$ is \textbf{saturated} if $\T^{\perp\perp} \cong \T$ in $\Th\slash \U$.
\item $\T$ is \textbf{balanced} if $\T^{\perp} \cong \T$ in $\Th\slash \U$.
\end{enumerate}
\item Given a pair of Lawvere theories $\sS$ and $\T$ over $\U$, we say that $\T$ and $\sS$ are \textbf{mutual commutants} in $\U$ if $\T \cong \sS^\perp$ and $\T^\perp \cong \sS$.
\end{enumerate}
\end{DefSub}

We readily deduce the following:

\begin{PropSub}\label{thm:sat}
\emptybox
\begin{enumerate}
\item Any saturated theory $\T$ over $\U$ is necessarily a subtheory of $\U$.
\item A theory $\T$ over $\U$ is saturated if and only if it is (isomorphic to) a commutant $\sS^\perp$ of some theory $\sS$ over $\U$.
\item Theories $\T$ and $\sS$ over $\U$ are mutual commutants if and only if $\T$ is saturated and $\sS$ is its commutant.
\item A subtheory $\T$ of $\U$ is commutative (as a Lawvere theory) if and only if $\T$ is contained in its commutant.
\end{enumerate}
\end{PropSub}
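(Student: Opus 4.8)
The plan is to reduce all four statements to the functoriality and self-adjointness of $(-)^\perp$ recorded in \bref{thm:cmtnt_adjn}, together with the elementary fact that $\SubTh(\U)$ is a preordered set, so that between any two subtheories of $\U$ there is at most one morphism over $\U$ and hence any endomorphism of a subtheory over $\U$ is the identity. Throughout I use that the commutant of \emph{any} theory over $\U$ is, by construction, a concrete subtheory of $\U$ (\bref{thm:concrete_cmtnt}, \bref{def:cmtnt}); in particular $\T^{\perp\perp}=(\T^\perp)^\perp$ always carries a faithful morphism to $\U$.

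For (1): if $\T$ is saturated, its structure morphism to $\U$ factors as $\T\xrightarrow{\sim}\T^{\perp\perp}\hookrightarrow\U$, a composite of an isomorphism of Lawvere theories with a faithful functor, hence is itself faithful, so $\T$ is a subtheory. The forward direction of (2) is then immediate: saturation exhibits $\T\cong\T^{\perp\perp}=(\T^\perp)^\perp$ as a commutant of the theory $\sS:=\T^\perp$ over $\U$. The converse of (2) is the one step that needs a small argument. Assume $\T\cong\sS^\perp$ for some $\sS$ over $\U$. Since $(-)^\perp$ is a contravariant functor (\bref{thm:cmtnt_adjn}(2)), applying it twice gives $\T^{\perp\perp}\cong(\sS^\perp)^{\perp\perp}=\sS^{\perp\perp\perp}$, so it suffices to prove the triple-commutant identity $\sS^{\perp\perp\perp}\cong\sS^\perp$ over $\U$. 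Here I would invoke that $\sS^\perp$ is a subtheory of $\U$ and that, by \bref{thm:cmtnt_adjn}(4), $(-)^\perp$ restricts to a Galois connection on $\SubTh(\U)$; the standard property $(-)^{\perp\perp\perp}=(-)^\perp$ of Galois connections then gives the identity. Concretely, the canonical morphism $\sS^\perp\to(\sS^\perp)^{\perp\perp}$ of \bref{thm:cmtnt_adjn} and the image under $(-)^\perp$ of the canonical morphism $\sS\to\sS^{\perp\perp}$ are morphisms of subtheories over $\U$ in both directions, hence mutually inverse. Therefore $\T^{\perp\perp}\cong\sS^\perp\cong\T$ and $\T$ is saturated.

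Part (3) is then purely formal. If $\T$ and $\sS$ are mutual commutants, then $\T\cong\sS^\perp$ makes $\T$ saturated by (2), while $\T^\perp\cong\sS$ says $\sS$ is its commutant. Conversely, if $\T$ is saturated and $\sS\cong\T^\perp$ is its commutant, then $\sS^\perp\cong\T^{\perp\perp}\cong\T$ by functoriality and saturation, so $\T\cong\sS^\perp$ and $\T^\perp\cong\sS$, i.e.\ $\T$ and $\sS$ are mutual commutants.

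For (4) I would first record a lemma: a subtheory embedding $\iota:\T\hookrightarrow\U$, being a morphism of Lawvere theories, strictly preserves the designated power projections and hence the canonical left and right power cones of \bref{par:bif_kp} (which are themselves images under $T^{(-)}$ of morphisms in $\FinCard$), so $\iota(\mu*\nu)=\iota(\mu)*\iota(\nu)$ and $\iota(\mu\stt\nu)=\iota(\mu)\stt\iota(\nu)$ for all $\mu,\nu\in\mor\T$; since $\iota$ is faithful, $\mu$ commutes with $\nu$ in $\T$ iff $\iota(\mu)$ commutes with $\iota(\nu)$ in $\U$. Granting this, $\T$ is commutative as a Lawvere theory (\bref{def:kps_cmt}) exactly when every morphism of $\T$ commutes, inside $\U$, with every morphism of $\T$, which is precisely the condition $\mor\T\subseteq\mor(\T^\perp)$ (\bref{def:cmtnt}), i.e.\ that $\T$ is contained in its commutant. (Alternatively, this is \bref{thm:univ_prop_cmtnt} applied with $A=B=\iota$: $\iota$ commutes with itself iff it factors through $\T^\perp\hookrightarrow\U$.) I do not anticipate any real obstacle; the only point that is not a one-line deduction is the triple-commutant identity in the converse of (2), and even there the work is entirely formal once one appeals to \bref{thm:cmtnt_adjn}(4) and the fact that $\SubTh(\U)$ is a preorder.
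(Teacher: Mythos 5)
Your proposal is correct and follows the route the paper intends: the paper offers no written proof beyond ``we readily deduce,'' and your deductions from Proposition \bref{thm:cmtnt_adjn} (functoriality and self-adjointness of $(-)^\perp$, the triple-commutant identity via the Galois connection on the preorder $\SubTh(\U)$), from the concreteness of commutants \pbref{thm:concrete_cmtnt}, and from the universal property \pbref{thm:univ_prop_cmtnt} applied with $A=B=\iota$ are exactly the expected ones. The only point you rightly single out as needing care --- that a subtheory embedding preserves Kronecker products because it strictly preserves the designated power cones --- is handled correctly.
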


\begin{CorSub}\label{thm:bal_impl_comm_sat}
Every balanced theory over $\U$ is a commutative, saturated subtheory of $\U$.
\end{CorSub}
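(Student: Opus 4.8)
The plan is to derive the statement formally from Proposition \ref{thm:sat} together with the functoriality of $(-)^\perp$ recorded in Proposition \ref{thm:cmtnt_adjn}. Let $\T$ be a balanced theory over $\U$, so that by Definition \ref{def:sat_bal} we are handed an isomorphism $\T^\perp \cong \T$ in $\Th\slash\U$.

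First I would observe that $\T$ is automatically a subtheory of $\U$. Indeed, the commutant $\T^\perp$ is by construction a concrete subtheory of $\U$ (Proposition \ref{thm:concrete_cmtnt}, Definition \ref{def:cmtnt}), so its structure morphism $\T^\perp \hookrightarrow \U$ is faithful; composing with the given isomorphism $\T \cong \T^\perp$ shows that the structure morphism $\T \to \U$ is faithful as well, which is the definition of a subtheory.

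Next, for saturation, I would apply the contravariant functor $(-)^\perp \colon (\Th\slash\U)^\op \to \Th\slash\U$ of Proposition \ref{thm:cmtnt_adjn}(2) to the isomorphism $\T^\perp \cong \T$, obtaining an isomorphism $\T^{\perp\perp} = (\T^\perp)^\perp \cong \T^\perp$ in $\Th\slash\U$; composing this with the original isomorphism $\T^\perp \cong \T$ yields $\T^{\perp\perp} \cong \T$, i.e.\ $\T$ is saturated. (Equivalently, one can quote Proposition \ref{thm:sat}(2) directly, since $\T \cong \T^\perp$ exhibits $\T$ as the commutant of some theory over $\U$.)

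Finally, for commutativity: now that both $\T$ and $\T^\perp$ are subtheories of $\U$, the abstract isomorphism $\T \cong \T^\perp$ in $\Th\slash\U$ descends to the preordered set $\SubTh(\U)$, which is a full sub-preorder of $\Th\slash\U$; in particular $\T \leqslant \T^\perp$ there, i.e.\ $\T$ is contained in its commutant. Proposition \ref{thm:sat}(4) then gives that $\T$ is commutative as a Lawvere theory, completing the argument. I do not expect a genuine obstacle here, as the result is a bookkeeping consequence of the established Galois connection; the only point needing mild care is confirming that the isomorphism in $\Th\slash\U$ really does place $\T$ below its commutant in $\SubTh(\U)$, which holds because $\SubTh(\U)$ is full on subtheories.
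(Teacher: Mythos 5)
Your argument is correct and is essentially the deduction the paper intends: the corollary is stated without proof as a "readily deduced" consequence of Proposition \bref{thm:sat} and the functoriality of $(-)^\perp$ from Proposition \bref{thm:cmtnt_adjn}, which is exactly how you proceed (subtheory via faithfulness of $\T^\perp\hookrightarrow\U$, saturation via $\T\cong\T^\perp$ being a commutant, commutativity via $\T\leqslant\T^\perp$ and Proposition \bref{thm:sat}(4)). No gaps.
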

\newpage
\begin{ThmSub}\label{thm:th_lr_rmods_mutual_cmtnts}
Let $R$ be a ring or rig.
\begin{enumerate}
\item The Lawvere theories $\Mat_R$ and $\Mat_{R^\op}$ of left and right $R$-modules (respectively) are mutual commutants in the full finitary theory $\Set_R$ of $R$ in $\Set$.
\item The Lawvere theory of left $R$-modules $\Mat_R$ is a saturated subtheory of $\Set_R$.
\item The subtheory $\Mat_R \hookrightarrow \Set_R$ is balanced if and only if $R$ is commutative.
\end{enumerate}
\end{ThmSub}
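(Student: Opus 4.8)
The plan is to deduce all three parts from Theorem~\bref{thm:cmtnt_lth_rmods}, applied both to $R$ and to $R^\op$, together with the formal machinery of \S\bref{sec:sat_bal} and the commutativity criterion \eqref{eq:lth_rmods_cmtv_iff_r_cmtv}.

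For part~1, Theorem~\bref{thm:cmtnt_lth_rmods} already supplies one of the two required isomorphisms, namely $(\Mat_R)^\perp \cong \Mat_{R^\op}$ as theories over $\Set_R$. Since $R^\op$ is again a ring or rig, and since the full finitary theory depends only on the underlying set, so that $\Set_{R^\op} = \Set_R$, the same theorem applied with $R^\op$ in place of $R$ yields $(\Mat_{R^\op})^\perp \cong \Mat_{(R^\op)^\op} = \Mat_R$ over $\Set_R$; here one checks that the distinguished morphism $\Mat_{R^\op} \to \Set_R$ entering that application is exactly the one classifying the right $R$-module $R$, i.e.\ the left $R^\op$-module $R$. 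The two isomorphisms together say precisely that $\Mat_R$ and $\Mat_{R^\op}$ are mutual commutants in $\Set_R$ in the sense of \bref{def:sat_bal}. Part~2 is then immediate from \bref{thm:sat}: since $\Mat_R$ is (isomorphic to) the commutant $(\Mat_{R^\op})^\perp$ of a theory over $\Set_R$, it is saturated by \bref{thm:sat}(2), and a saturated theory over $\Set_R$ is in particular a subtheory of $\Set_R$ by \bref{thm:sat}(1); alternatively one reads this off directly from \bref{thm:sat}(3) applied to the pair in part~1 (that $\Mat_R \hookrightarrow \Set_R$ is a subtheory is in any case clear, since $R$ is a faithful representation).

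For part~3 I would argue both directions. If $R$ is commutative, then $R = R^\op$ as rigs, the left and right $R$-module structures on $R$ coincide, and hence $\Mat_R = \Mat_{R^\op}$ as theories over $\Set_R$; combined with part~1 this gives $(\Mat_R)^\perp \cong \Mat_R$ over $\Set_R$, i.e.\ the subtheory $\Mat_R \hookrightarrow \Set_R$ is balanced. Conversely, if $\Mat_R \hookrightarrow \Set_R$ is balanced, then by \bref{thm:bal_impl_comm_sat} the Lawvere theory $\Mat_R$ is commutative, and therefore $R$ is commutative by \eqref{eq:lth_rmods_cmtv_iff_r_cmtv}.

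The only point needing care, rather than a genuine obstacle, is the bookkeeping: one must track the isomorphisms coming from Theorem~\bref{thm:cmtnt_lth_rmods} as isomorphisms in $\Th \slash \Set_R$ (not merely of underlying categories) and verify that the identification $\Set_{R^\op} = \Set_R$ is compatible with the relevant structure morphisms. All of the substantive content is already carried by Theorem~\bref{thm:cmtnt_lth_rmods} and the general theory of commutants developed in \S\bref{sec:cmtnt}--\S\bref{sec:sat_bal}.
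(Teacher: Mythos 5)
Your proposal is correct and follows essentially the same route as the paper: part 1 by two applications of Theorem \bref{thm:cmtnt_lth_rmods} (to $R$ and to $R^\op$, using $\Set_{R^\op} = \Set_R$), part 2 immediately from \bref{thm:sat}, and part 3 via $\Mat_R = \Mat_{R^\op}$ over $\Set_R$ in one direction and \bref{thm:bal_impl_comm_sat} together with \eqref{eq:lth_rmods_cmtv_iff_r_cmtv} in the other. The extra bookkeeping remarks about tracking isomorphisms in $\Th \slash \Set_R$ are sensible but do not change the argument.
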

\begin{proof}
1 is obtained by two applications of \bref{thm:cmtnt_lth_rmods}, and 2 then follows immediately.  If $R$ is commutative then $\Mat_R = \Mat_{R^\op}$ as theories over $\Set_R$, so $\Mat_R$ is balanced, by 1.  Conversely, if $\Mat_R$ is balanced over $\Set_R$, then $\Mat_R$ is commutative by \bref{thm:bal_impl_comm_sat}, so $R$ is commutative by \eqref{eq:lth_rmods_cmtv_iff_r_cmtv}.
\end{proof}

\begin{ExaSub}\label{exa:lth_slats_bal}
By \bref{thm:th_lr_rmods_mutual_cmtnts}, the Lawvere theory of semilattices $\Mat_2$ \pbref{exa:slats} is a balanced subtheory of the Lawvere theory of Boolean algebras $\Set_2$ \pbref{exa:bool}.
\end{ExaSub}

\begin{ExaSub}[\textbf{A non-saturated subtheory}]
Let $k$ be an infinite integral domain, and consider the Lawvere theory of commutative $k$-algebras $\T$ \pbref{exa:lth_ckalgs}.  The domain $k$ itself is a commutative $k$-algebra and so determines a morphism of Lawvere theories $\kappa:\T \rightarrow \Set_k$ into the full finitary theory $\Set_k$ of $k$ in $\Set$ \pbref{def:full_theory}.  For each natural number $n$, the associated component $\kappa_{n,1}:\T(n,1) \rightarrow \Set_k(n,1)$ is the mapping $k[x_1,...,x_n] \rightarrow \Set(k^n,k)$ that sends a polynomial $f$ to the polynomial function $k^n \rightarrow k$ determined by $f$.  Since $k$ is an infinite integral domain, this mapping $\kappa_{n,1}$ is injective (e.g. by \cite[III.4, Thm. 7]{BirMac}), so $\kappa$ presents $\T$ as a subtheory of $\Set_k$.  The commutant of this subtheory $\T$ is the subtheory $\T^\perp \hookrightarrow \Set_k$ in which $\T^\perp(n,1) = \CAlg{k}(k^n,k)$ is the set of all $k$-algebra homomorphisms $\varphi:k^n \rightarrow k$.  But any such homomorphism $\varphi$ is $k$-linear and so is a linear combination $\varphi = \sum_{i = 1}^n c_i\pi_i$ of the projections $\pi_i:k^n \rightarrow k$ $(i = 1,...,n)$, where $c_i = \varphi(b_i)$ is the image of the $i$-th standard basis vector $b_i$ for $k^n$.  We also know that $1 = \varphi(1) = \sum_i c_i$.  Hence since $i \neq j$ implies $b_ib_j = 0$ in $k^n$ and $\varphi$ preserves multiplication, it follows that $\varphi = \pi_i$ for a unique $i$.  Therefore $\T^\perp(n,1)$ is just the set of all $n$ projections $k^n \rightarrow k$, and (since $k$ has at least two elements) it follows that $\T^\perp$ is isomorphic to the initial Lawvere theory $\FinCard^\op$.  Therefore $\T^\perp \hookrightarrow \Set_k$ is central (by \bref{exa:uniq_mor_from_initial_th_is_central}) and hence $\T^{\perp\perp} = \Set_k$, but $\Set_k \not\cong \T$ by a cardinality argument: $\text{\#}\Set_k(1,1) = (\text{\#} k)^{\text{\#}k} \gt 2^{\text{\#}k} > \text{\#}k = \text{\#}k[x] = \text{\#}\T(1,1)$.
\end{ExaSub}

\section{The theories of affine and convex spaces as commutants}\label{sec:th_affsp_as_cmtnt}

Let $R$ be ring or, more generally, a rig.  

\begin{ParSub}[\textbf{Pointed $R$-modules}]\label{par:ptd_rmod}
By definition, a \textbf{pointed (left)} $R$-\textbf{module} is a (left) $R$-module $M$ equipped with an arbitrary chosen element $* \in M$.  Pointed $R$-modules are objects of a category $\Mod{R}^*$ in which the morphisms are $R$-module homomorphisms that preserve the chosen points $*$.  By \bref{par:var_algs}, $\Mod{R}^*$ is strictly finitary-algebraic over $\Set$.  Given a natural number $n$, the free pointed $R$-module on $n$-generators is the free $R$-module on $1 + n$ generators $R^{1 + n}$.  Indeed, writing the successive standard basis vectors for $R^{1 + n}$ as $\gamma_0,\gamma_1,...,\gamma_n$, we find that $R^{1 + n}$ is a free pointed $R$-module on the $n$ generators $\gamma_1,...,\gamma_n$ when we take $* = \gamma_0 = (1,0,0,...,0)$.  By \bref{par:alg_cats}, $\Mod{R}^*$ is therefore isomorphic to the category of normal $\T$-algebras for a Lawvere theory $\T = \Mat_R^*$ with $\Mat_R^*(n,m) = (R^{1 + n})^m = R^{m \times (1+n)}$.

The notion of \textbf{pointed right} $R$-\textbf{module} is defined similarly, so that pointed right $R$-modules are the same as pointed left $R^\op$-modules, equivalently, normal $\Mat_{R^\op}^*$-algebras.  Given a pointed right $R$-module $M$, we shall now record a detailed description of the corresponding normal $\Mat_{R^\op}^*$-algebra $\underline{M}:\Mat_{R^\op}^* \rightarrow \Set$ for use in the sequel.  By \bref{par:alg_cats}, $\underline{M}$ has the same carrier as $M$ and associates to each $w \in \Mat^*_{R^\op}(n,1) = R^{1 + n}$ the mapping $\Phi^M_w:M^n \rightarrow M$ defined as follows.  Recalling that $R^{1 + n}$ is a free pointed right $R$-module on the $n$ generators $\gamma_1,...,\gamma_n \in R^{1 + n}$, each $n$-tuple $x = (x_1,...x_n) \in M^n$ induces a unique morphism of pointed right $R$-modules $x^\sharp:R^{1 + n} \rightarrow M$ with $x^\sharp(\gamma_i) = x_i$ $(i = 1,...,n)$, and the associated mapping $\Phi^M_w:M^n \rightarrow M$ is given by
$$\Phi^M_w(x) = x^\sharp(w) = * \cdot w_0 + \sum_{i = 1}^n x_iw_i$$
where $* \in M$ is the designated point and $w = (w_0,w_1,...,w_n) \in R^{1+n}$.

In particular, we can consider $R$ itself as a pointed right $R$-module with chosen point $1 \in R$.  Therefore $R$ is the carrier of a normal $\Mat_{R^\op}^*$-algebra, and we can thus consider $\Mat_{R^\op}^*$ as a Lawvere theory over the full finitary theory $\Set_R$ of $R$ in $\Set$.  Explicitly, we have a morphism
\begin{equation}\label{eq:can_mor_on_th_pted_rmods}\Phi^R:\Mat_{R^\op}^* \rightarrow \Set_R\end{equation}
sending each $w = (w_0,...,w_n) \in R^{1 + n}$ to the mapping $\Phi^R_w:R^n \rightarrow R$ given by
\begin{equation}\label{eq:aff_map_det_by_pted_rmod_op}\Phi^R_w(x) = w_0 + \sum_{i = 1}^n x_iw_i\;.\end{equation}
\end{ParSub}

\begin{ThmSub}\label{thm:taffr_as_cmtnt}
The Lawvere theory of left $R$-affine spaces $\Mat_R^\aff$ is the commutant with respect to $R$ of the theory of pointed right $R$-modules $\Mat_{R^\op}^*$.  Indeed,
$$\Mat_R^\aff \cong (\Mat_{R^\op}^*)^\perp$$
as Lawvere theories over $\Set_R$ when $\Mat_R^\aff$ is equipped with the morphism $\Mat_R^\aff \rightarrow \Set_R$ determined by the left $R$-affine space $R$.
\end{ThmSub}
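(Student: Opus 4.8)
The plan is to compute the commutant hom-set by hom-set using the algebraic characterization of commutants and then to recognize the result as the affine matrices. Since the structure morphism $\Phi^R:\Mat_{R^\op}^*\to\Set_R$ of \eqref{eq:can_mor_on_th_pted_rmods} is exactly the one induced by the pointed right $R$-module $(R,1)$, Corollary \bref{thm:charn_cmtnt_talg} applies: for a suitable choice of powers, $(\Mat_{R^\op}^*)^\perp$ is the concrete subtheory of $\Set_R$ with
$$(\Mat_{R^\op}^*)^\perp(n,m)=\Alg{\Mat_{R^\op}^*}_{\Set}\!\bigl((R,1)^{n},(R,1)^{m}\bigr)\subseteq\Set(R^n,R^m),$$
the set of homomorphisms of pointed right $R$-modules between the indicated powers of $(R,1)$. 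The first step is to unwind that power: because the forgetful functor from pointed right $R$-modules to $\Set$ is monadic (hence preserves and reflects products), $(R,1)^{n}$ is the cartesian power, i.e. $R^n$ with the componentwise right $R$-module structure and distinguished point $(1,1,\dots,1)$; alternatively this can be read directly off the description of $\Phi^M_w$ in \bref{par:ptd_rmod}.

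Next I would describe the homomorphisms $(R,1)^n\to(R,1)^m$ explicitly. Forgetting the points, a right-$R$-linear map $R^n\to R^m$ is determined by the images of the free generators $e_1,\dots,e_n$ and hence has the form $x\mapsto Ax$ for a unique $m\times n$ matrix $A$ over $R$ (the $R^\op$-version of the computation in \bref{exa:cmtnt_lth_rmods}). Such a map carries $(1,\dots,1)$ to $(1,\dots,1)$ if and only if each row of $A$ sums to $1$, which is exactly the defining condition of the affine core $\Mat_R^\aff(n,m)\subseteq\Mat_R(n,m)$ (\bref{exa:raff}). Therefore $(\Mat_{R^\op}^*)^\perp(n,m)$ is precisely the set of maps $R^n\to R^m$ of the form $x\mapsto Ax$ with $A$ an $R$-matrix all of whose rows sum to $1$.

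It remains to see that this agrees with $\Mat_R^\aff$ as a theory over $\Set_R$. The structure morphism $\Mat_R^\aff\hookrightarrow\Mat_R\xrightarrow{R}\Set_R$ attached to the left $R$-affine space $R$ sends such a matrix $A$ to the same map $x\mapsto Ax$, and it is faithful (evaluate at the standard basis column vectors), so $\Mat_R^\aff$ too is a concrete subtheory of $\Set_R$, and it has exactly the same morphisms as $(\Mat_{R^\op}^*)^\perp$ in every degree; two concrete subtheories of $\Set_R$ with the same morphisms coincide, giving the desired isomorphism (in fact equality) of theories over $\Set_R$. I do not expect any conceptual obstacle here; the one place demanding care is the bookkeeping---keeping straight the passage between right $R$-modules and left $R^\op$-modules and between row and column conventions, and verifying that the designated powers furnished by Corollary \bref{thm:charn_cmtnt_talg} are indeed the componentwise, $(1,\dots,1)$-pointed modules used above.
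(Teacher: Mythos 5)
Your proposal is correct and follows essentially the same route as the paper: both arguments identify $(\Mat_{R^\op}^*)^\perp$, via the characterization of commutants as homomorphism theories \pbref{thm:charn_cmtnt_talg}, with the concrete subtheory of $\Set_R$ whose morphisms are the right $R$-linear maps $R^n\rightarrow R^m$ preserving the point $(1,\dots,1)$, and then match these with the row-sum-one matrices constituting $\Mat_R^\aff$. The only organizational difference is that the paper reaches this subtheory from the other end---as the affine core of $(\Mat_{R^\op})^\perp\cong\Mat_R$ via \bref{thm:th_lr_rmods_mutual_cmtnts}---which lets it bypass your explicit matrix bookkeeping, but the mathematical content is the same.
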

\begin{proof}
By \bref{thm:th_lr_rmods_mutual_cmtnts} we know that $\Mat_R \cong (\Mat_{R^\op})^\perp$ over $\Set_R$, so the theory $\Mat_R^\aff$ is isomorphic to the affine core $\T^\aff$ of $\T = (\Mat_{R^\op})^\perp$.  This yields an isomorphism $\Mat_R^\aff \cong \T^\aff$ as theories over $\Set_R$ since the inclusion $\Mat_R^\aff \hookrightarrow \Mat_R$ is a morphism over $\Set_R$.  Recall that $\T$ is the concrete subtheory of $\Set_R$ consisting of all right $R$-linear maps $\varphi:R^n \rightarrow R^m$.  Therefore the affine core $\T^\aff$ of $\T$ is the subtheory of $\Set_R$ consisting of all right $R$-linear maps $\varphi:R^n \rightarrow R^m$ that commute with the `diagonal' maps $(1,...,1):R^1 \rightarrow R^j$ with $j = n,m$.  But these are precisely the homomorphisms of pointed right $R$-modules $\varphi:R^n \rightarrow R^m$, where we regard the powers $R^j$ of $R$ as pointed right $R$-modules with chosen point $1 = (1,...,1) \in R^j$.  With this convention $R^j$ is the $j$-th power of $R = R^1$ in the category of pointed right $R$-modules, so the subtheory $\T^\aff$ of $\Set_R$ is precisely the commutant with respect to $R$ of the theory of pointed right $R$-modules $\Mat_{R^\op}^*$.  Hence $\Mat_R^\aff \cong \T^\aff = (\Mat_{R^\op}^*)^\perp$ over $\Set_R$.
\end{proof}

\begin{CorSub}\label{thm:th_raff_sat}\emptybox
\begin{enumerate}
\item The Lawvere theory of left $R$-affine spaces $\Mat_R^\aff$ is a saturated subtheory of the full finitary theory $\Set_R$ of $R$ in $\Set$.
\item When $R$ is commutative, $\Mat_R^\aff$ is a commutative, saturated subtheory of $\Set_R$.
\item If $R \neq 0$, then the subtheory $\Mat_R^\aff \hookrightarrow \Set_R$ is not balanced.
\end{enumerate}
\end{CorSub}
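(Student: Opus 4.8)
The plan is to derive all three parts from Theorem~\bref{thm:taffr_as_cmtnt} together with the general facts about commutants and saturation established above. For~1, Theorem~\bref{thm:taffr_as_cmtnt} exhibits $\Mat_R^\aff$ as (isomorphic to) the commutant $(\Mat_{R^\op}^*)^\perp$ of a theory over $\Set_R$, so Proposition~\bref{thm:sat}(2) gives at once that $\Mat_R^\aff$ is saturated, and Proposition~\bref{thm:sat}(1) (or simply the subtheory embedding $\Mat_R^\aff \hookrightarrow \Set_R$ determined by the left $R$-affine space $R$) records that it is a subtheory of $\Set_R$. For~2, when $R$ is commutative the abstract theory $\Mat_R^\aff$ is commutative by Example~\bref{exa:commutative_lths}(1), so combining this with~1 yields the claim.

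For~3, I would compute the commutant subtheory in the bottom degree. By Corollary~\bref{thm:charn_cmtnt_talg}, applied to the $\Mat_R^\aff$-algebra $R$ in $\Set$, we have $(\Mat_R^\aff)^\perp(1,1) = \Alg{\Mat_R^\aff}_{\Set}(R,R)$, the set of left $R$-affine endomaps of $R$. The translation $t \colon R \to R$, $x \mapsto x + 1$, lies in this set: given a row vector $w = [w_1,\dots,w_n]$ with $\sum_{i=1}^n w_i = 1$, one computes $t\left(\sum_{i=1}^n w_i x_i\right) = \sum_{i=1}^n w_i x_i + 1 = \sum_{i=1}^n w_i x_i + \sum_{i=1}^n w_i = \sum_{i=1}^n w_i(x_i + 1) = \sum_{i=1}^n w_i\, t(x_i)$, so $t$ preserves left $R$-affine combinations and hence belongs to the subtheory $(\Mat_R^\aff)^\perp \hookrightarrow \Set_R$. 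On the other hand $\Mat_R^\aff(1,1) = \{[1]\}$, since the single row of a $1 \times 1$ matrix with row sum $1$ must equal $1$, and the embedding $\Mat_R^\aff \hookrightarrow \Set_R$ carries $[1]$ to $\id_R$. As $R \neq 0$ we have $1 \neq 0$, so $t \neq \id_R$; thus $t$ lies in $(\Mat_R^\aff)^\perp$ but not in $\Mat_R^\aff$, viewed as subtheories of $\Set_R$. Since an isomorphism in $\Th\slash\Set_R$ between two subtheories forces their underlying concrete subtheories to agree (the inclusions being faithful), $\Mat_R^\aff$ cannot be balanced.

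The only steps demanding any care are the routine verification that the translation $t$ preserves $R$-affine combinations and the identification $\Mat_R^\aff(1,1) = \{[1]\}$; I anticipate no genuine obstacle. If one prefers to bypass the last remark about isomorphic subtheories coinciding concretely, one may instead observe that $(\Mat_R^\aff)^\perp$ is not an affine theory when $R \neq 0$ — the translation $t$ violates the defining condition of the affine core — whereas $\Mat_R^\aff$, being an affine core, is affine; since a theory isomorphism preserves affine cores, the two cannot be isomorphic over $\Set_R$. A cruder cardinality count would also suffice, as $\#(\Mat_R^\aff)^\perp(1,1) \geq 2 > 1 = \#\Mat_R^\aff(1,1)$.
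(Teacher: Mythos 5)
Your handling of parts 1 and 2 coincides with the paper's, which likewise derives them immediately from \bref{thm:taffr_as_cmtnt}, \bref{thm:sat}, and \bref{exa:commutative_lths}. For part 3 your argument is correct but proceeds by a different witness and a different comparison. The paper fixes a nonzero $r \in R$ and observes that the constant map $R^n \rightarrow R$ with value $r$ is left $R$-affine but not right $R$-linear, hence lies in $(\Mat_R^\aff)^\perp(n,1) = \Aff{R}(R^n,R)$ but not in $(\Mat_{R^\op}^*)^\perp(n,1)$; since $\Mat_R^\aff \cong (\Mat_{R^\op}^*)^\perp$ over $\Set_R$ by \bref{thm:taffr_as_cmtnt}, this shows $(\Mat_R^\aff)^\perp \not\leq (\Mat_{R^\op}^*)^\perp$ in $\SubTh(\Set_R)$ and hence that $\Mat_R^\aff$ is not balanced. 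You instead compare $(\Mat_R^\aff)^\perp$ with $\Mat_R^\aff$ directly in degree $(1,1)$, pitting the translation $x \mapsto x + 1$ (which is indeed left $R$-affine over any rig, by exactly the computation you give) against the singleton $\Mat_R^\aff(1,1) = \{[1]\}$. Both routes are sound. Yours has the small advantage of not invoking \bref{thm:taffr_as_cmtnt} for part 3 at all (only \bref{thm:charn_cmtnt_talg}), and your cardinality fallback rules out any isomorphism of theories whatsoever, not merely one over $\Set_R$; your observation that $(\Mat_R^\aff)^\perp$ fails to be an affine theory is also a clean structural reformulation. The paper's choice of witness, on the other hand, makes visible the precise algebraic reason the duality is asymmetric, namely that left $R$-affine maps need not be right $R$-linear.
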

\begin{proof}
1 and 2 follow immediately from \bref{thm:taffr_as_cmtnt}, \bref{thm:sat}, and \bref{exa:commutative_lths}.  For 3, suppose that $R \neq 0$, and fix some nonzero element $r$ of $R$.  Considering $\Mat_R^\aff$ and $\Mat_{R^\op}^*$ as Lawvere theories over $\Set_R$, we know by \bref{thm:taffr_as_cmtnt} that $\Mat_R^\aff \cong (\Mat_{R^\op}^*)^\perp$, so in order to show that $\Mat_R^\aff$ is not balanced it suffices to show that $(\Mat_R^\aff)^\perp \not\leq (\Mat_{R^\op}^*)^\perp$ as subtheories of $\Set_R$.  But the constant map $R^n \rightarrow R$ with value $r$ is left $R$-affine and is not right $R$-linear, so this constant map is an element of $(\Mat_R^\aff)^\perp(n,1) = \Aff{R}(R^n,R)$ but is not an element of $(\Mat_{R^\op}^*)^\perp(n,1) = \Mod{R^\op}^*(R^n,R)$.
\end{proof}

\begin{ExaSub}\label{exa:th_unb_jslats_sat_subth}
By \bref{thm:th_raff_sat}, the theory $\Mat_2^\aff$ of unbounded join semilattices (equivalently, $2$-affine spaces, \bref{exa:unb_jslats}) is a non-balanced, commutative, saturated subtheory of the theory $\Set_2$ of Boolean algebras \pbref{exa:bool}.
\end{ExaSub}

\section{The commutant of the theory of unbounded semilattices}\label{sec:cmt_th_unb_slats}

Given a ring or rig $R$, we showed in the previous section that the theory of left $R$-affine spaces $\Mat_R^\aff$ is the commutant of the theory $\Mat_{R^\op}^*$ of pointed right $R$-modules, considered as a theory over $\Set_R$.  In the remainder of the paper, we shall examine the commutant of $\Mat_R^\aff$ over $\Set_R$, and in particular we ask whether this commutant is $\Mat_{R^\op}^*$, i.e. whether the theories $\Mat_R^\aff$ and $\Mat_{R^\op}^*$ are mutual commutants in the full finitary theory of $R$ in $\Set$.  We shall later show that this is indeed the case for all rings \pbref{thm:cmtnt_of_th_of_raff_sp} and also for many rigs that are not rings \pbref{thm:suff_conds}.  But first we will examine a notable example of a rig for which this is \textit{not} the case.

Writing simply $2$ to denote the rig $(2,\vee,0,\wedge,1)$, recall that $2$-affine spaces are the same as unbounded join semilattices \pbref{exa:unb_jslats}, equivalently, idempotent commutative semigroups, whereas $2$-modules are (bounded) join semilattices \pbref{exa:slats}.  By \bref{exa:th_unb_jslats_sat_subth}, we know that the theory of unbounded join semilattices is a saturated subtheory of the theory $\Set_2$ of Boolean algebras, and in the present section we characterize its commutant.  

\begin{ParSub}[\textbf{Join semilattices with top element}]\label{par:jslatwtop}

Let $\SLat_{\scriptscriptstyle \vee\top}$ denote the category whose objects are join semilattices with a top element and whose morphisms are homomorphisms of join semilattices that preserve the top element.  Join semilattices with a top element can be described equivalently as idempotent commutative monoids $S$ with an additional constant $\top$ satisfying a single additional equation $s \cdot \top = \top$ $(s \in S)$, so $\SLat_{\scriptscriptstyle \vee\top}$ is a variety of finitary algebras and hence (by \bref{par:var_algs}) is isomorphic to the category of normal $\T$-algebras for a Lawvere theory $\T = \T_{\scriptscriptstyle \bot\vee\top}$.

In order to obtain a description of $\T_{\scriptscriptstyle \bot\vee\top}$, observe that for each finite cardinal $n$, the free join-semilattice-with-top-element $F(n)$ on $n$ generators can be obtained by artificially adjoining a new top element $\top$ to the free join semilattice $2^n = \sP(n)$ on $n$ generators \pbref{exa:slats}, where as generators we take the standard basis vectors $b_1,...,b_n \in 2^n$, i.e., the singleton subsets of $n$.  More precisely, we let $F(n) := \sP(n) + \{\top\}$ and observe that $F(n)$ then carries a unique join semilattice structure such that $\top$ is a top element of $F(n)$ and such that the inclusion $\sP(n) \hookrightarrow F(n)$ is a homomorphism of join semilattices.  Now let $S$ be a join semilattice with a top element, and let $x = (x_1,...,x_n) \in S^n$.  The universal property of $\sP(n)$ as a join semilattice now clearly entails that there is a unique morphism $x^\sharp:F(n) \rightarrow S$ in $\SLat_{\scriptscriptstyle \vee\top}$ with $x^\sharp(b_i) = x_i$ for all $i$.

Hence by \bref{par:alg_cats} the theory $\T_{\scriptscriptstyle \bot\vee\top}$ has
$$\T_{\scriptscriptstyle \bot\vee\top}(n,1) = F(n) = 2^n + \{\top\}\;.$$
The join semilattice $2 = (2,\vee,0)$ has top element $1$ and so (by \bref{def:full_theory}) determines a morphism of theories
\begin{equation}\label{eq:can_mor_on_th_jslats_w_top}\underline{2}:\T_{\scriptscriptstyle \bot\vee\top} \rightarrow \Set_2\end{equation}
by means of which $\T_{\scriptscriptstyle \bot\vee\top}$ can be considered as a theory over the full finitary theory $\Set_2$ of $2$ in $\Set$.  Recalling that $\Set_2$ is the theory of Boolean algebras \pbref{exa:bool}, we shall prove the following:
\end{ParSub}

\begin{ThmSub}\label{thm:th_uslats_slatswtop_cmtnts}
The following Lawvere theories are mutual commutants in the theory of Boolean algebras:
\begin{enumerate}
\item The theory $\Mat_2^\aff$ of unbounded join semilattices (equivalently, $2$-affine spaces);
\item the theory $\T_{\scriptscriptstyle \bot\vee\top}$ of join semilattices with top element.
\end{enumerate}
\end{ThmSub}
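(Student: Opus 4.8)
The plan is to invoke \bref{thm:sat}(3): by \bref{exa:th_unb_jslats_sat_subth} the theory $\Mat_2^\aff$ is a saturated subtheory of $\Set_2$, so in order to prove that $\Mat_2^\aff$ and $\T_{\scriptscriptstyle \bot\vee\top}$ are mutual commutants in $\Set_2$ it suffices to show that $\T_{\scriptscriptstyle \bot\vee\top}$ is the commutant of $\Mat_2^\aff$, i.e.\ that
$$(\Mat_2^\aff)^\perp \cong \T_{\scriptscriptstyle \bot\vee\top}$$
as theories over $\Set_2$, where the commutant is taken with respect to the $2$-affine space $2$ (equivalently, the join semilattice $2 = (2,\vee,0)$ with top element $1$), and $\T_{\scriptscriptstyle \bot\vee\top}$ carries the morphism $\underline 2:\T_{\scriptscriptstyle \bot\vee\top}\rightarrow\Set_2$ of \bref{eq:can_mor_on_th_jslats_w_top}.

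By \bref{thm:charn_cmtnt_talg} we may take $(\Mat_2^\aff)^\perp$ to be the concrete subtheory of $\Set_2$ with $(\Mat_2^\aff)^\perp(n,1) = \Aff{2}(2^n,2)$, the set of unbounded-join-semilattice homomorphisms $2^n \rightarrow 2$, where $2^n$ carries the componentwise join. Since every abstract operation in $\Mat_2^\aff(n,1)$ — a nonempty $S \subseteq n$, acting on $2$ by $x \mapsto \bigvee_{i \in S}x_i$ — is a composite in $\Mat_2^\aff$ of projections and the binary join operation $[1,1]:2 \rightarrow 1$, a map $f:2^n \rightarrow 2$ preserves every such operation as soon as it preserves $[1,1]$; hence, by \bref{par:cat_talgs}, $(\Mat_2^\aff)^\perp(n,1)$ is precisely the set of maps $f:2^n \rightarrow 2$ with $f(a \vee b) = f(a) \vee f(b)$ for all $a,b$.

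The next and central step is to classify these maps. Regard $2^n$ as the powerset $\sP(n)$ ordered by inclusion; from $A \subseteq B \Rightarrow B = A \cup B$ one sees that any such $f$ is monotone, so $f(\emptyset) \leq f(A) \leq f(n)$ for all $A \subseteq n$. If $f(\emptyset) = 1$ then $f$ is the constant map with value $1$. Otherwise $f(\emptyset) = 0$; putting $S = \{i \in n \mid f(\{i\}) = 1\}$ and using $A = \bigcup_{i \in A}\{i\}$, we get $f(A) = \bigvee_{i \in A}f(\{i\}) = 1$ iff $A \cap S \neq \emptyset$, i.e.\ $f$ is the map $x \mapsto \bigvee_{i \in S}x_i$. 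Thus $(\Mat_2^\aff)^\perp(n,1)$ consists of exactly the $2^n$ maps $x \mapsto \bigvee_{i \in S}x_i$ ($S \subseteq n$) together with the constant map $1$, and these $2^n + 1$ maps are pairwise distinct.

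Finally, by \bref{par:alg_cats} the morphism $\underline 2$ sends a subset $S \in 2^n = \T_{\scriptscriptstyle \bot\vee\top}(n,1) \setminus \{\top\}$ to the operation $x \mapsto \bigvee_{i \in S}x_i$ on $2$, and sends $\top$ to the constant map $1$; so the image of $\underline 2$ on homs of the form $n \rightarrow 1$ is exactly $(\Mat_2^\aff)^\perp(n,1)$, and since $\#\T_{\scriptscriptstyle \bot\vee\top}(n,1) = 2^n + 1$ this image has the same cardinality as $\T_{\scriptscriptstyle \bot\vee\top}(n,1)$, so $\underline 2$ is injective on each $\T_{\scriptscriptstyle \bot\vee\top}(n,1)$. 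By \bref{par:talgs}, $\underline 2$ is therefore a subtheory embedding, and since a concrete subtheory of $\Set_2$ is determined by its morphisms of the form $n \rightarrow 1$ \pbref{par:cat_th}, the concrete subtheory $(\Mat_2^\aff)^\perp$ and the image of $\underline 2$ coincide. Hence $(\Mat_2^\aff)^\perp \cong \T_{\scriptscriptstyle \bot\vee\top}$ over $\Set_2$, and \bref{thm:sat}(3) completes the proof. The only non-routine point is the classification of binary-join-preserving maps $\sP(n) \rightarrow 2$ in the third paragraph; the remaining steps are bookkeeping with concrete subtheories and appeals to the results of \S\bref{sec:cmtnt} and \S\bref{sec:sat_bal}.
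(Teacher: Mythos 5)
Your proof is correct, and it reorganizes the argument in a way worth noting. The paper proves both halves of the mutual-commutant claim directly: it first shows $\Mat_2^\aff \cong (\T_{\scriptscriptstyle \bot\vee\top})^\perp$ by observing that the pointed-$2$-module homomorphisms $2^n \rightarrow 2^m$ (which constitute $(\Mat_2^*)^\perp$ by \bref{thm:taffr_as_cmtnt}) are exactly the top-preserving join-semilattice homomorphisms, and only then computes $(\Mat_2^\aff)^\perp$. You instead get that first half for free from saturation via \bref{thm:sat}(3) and \bref{exa:th_unb_jslats_sat_subth}, so that the whole burden falls on identifying $(\Mat_2^\aff)^\perp$ with $\T_{\scriptscriptstyle \bot\vee\top}$; since the saturation of $\Mat_2^\aff$ ultimately also rests on \bref{thm:taffr_as_cmtnt}, the two routes have the same logical foundation, but yours is the more economical packaging, at the cost of not exhibiting the pleasant coincidence $(\T_{\scriptscriptstyle \bot\vee\top})^\perp = (\Mat_2^*)^\perp$ as concrete subtheories. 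In the central classification of join-preserving maps $\sP(n) \rightarrow 2$ you also diverge slightly: where the paper handles the case $\varphi(\emptyset) = 0$ by appealing to the balancedness of $\Mat_2$ in $\Set_2$ \pbref{exa:lth_slats_bal}, you give the elementary computation $f(A) = \bigvee_{i \in A} f(\{i\})$ directly; both are valid, and your version is self-contained at that step. The remaining bookkeeping (reduction to preservation of the binary join, the cardinality argument showing $\underline{2}$ is injective on each hom-set, and the identification of concrete subtheories by their morphisms into $U$) is sound.
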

\begin{proof}
By \bref{thm:taffr_as_cmtnt} we know that $\Mat_2^\aff \cong (\Mat_2^*)^\perp$ as theories over $\Set_2$, where $\Mat_2^*$ is the theory of pointed $2$-modules, equivalently, pointed join semilattices.  Explicitly, $(\Mat_2^*)^\perp$ is the concrete subtheory of $\Set_2$ consisting of all homomorphisms of join semilattices $\varphi:2^n \rightarrow 2^m$ that preserve the designated points $1 = (1,...,1) \in 2^j$ with $j = n,m$.  But the join semilattice $2$ has top element $1 \in 2$, and the finite powers $2^j$ of $2$ in $\SLat_{\scriptscriptstyle \vee\top}$ have top element $1 = (1,...,1) \in 2^j$, so $(\Mat_2^*)^\perp$ consists of all morphisms $\varphi:2^n \rightarrow 2^m$ in $\SLat_{\scriptscriptstyle \vee\top}$.  Hence
$$\Mat_2^\aff \cong (\Mat_2^*)^\perp = (\T_{\scriptscriptstyle \bot\vee\top})^\perp$$
as theories over $\Set_2$.

In particular, $\T_{\scriptscriptstyle \bot\vee\top}$ and $\Mat_2^\aff$ commute over $\Set_2$, so the morphism \eqref{eq:can_mor_on_th_jslats_w_top} factors through the commutant $(\Mat_2^\aff)^\perp \hookrightarrow \Set_2$.  We therefore have a morphism 
$$\underline{2}:\T_{\scriptscriptstyle \bot\vee\top} \rightarrow (\Mat_2^\aff)^\perp$$
that sends each $\omega \in \T_{\scriptscriptstyle \bot\vee\top}(n,1) = F(n) = 2^n + \{\top\}$ to an element $\underline{2}_\omega \in (\Mat_2^\aff)^\perp(n,1) = \USLat_\vee(2^n,2)$, i.e. a homomorphism of unbounded join semilattices $\underline{2}_\omega:2^n \rightarrow 2$.  By \bref{par:alg_cats}, this map $\underline{2}_\omega$ sends each $x = (x_1,...,x_n) \in 2^n$ to $\underline{2}_\omega(x) = x^\sharp(\omega)$, recalling from \bref{par:jslatwtop} that  $x^\sharp:F(n) \rightarrow 2$ is the unique morphism in $\SLat_{\scriptscriptstyle \vee\top}$ such that $x^\sharp(b_i) = x_i$ for all $i$.  Explicitly,
$$\underline{2}_\omega(x) = x^\sharp(\omega) = \begin{cases}
  \bigvee_{i = 1}^n (\omega_i \wedge x_i) & \text{if $\omega = (\omega_1,...,\omega_n) \in 2^n$}\\
  1 & \text{if $\omega = \top$}
  \end{cases}.
$$
Hence it suffices to show that each homomorphism of unbounded join semilattices $\varphi:2^n \rightarrow 2$ is equal to $\underline{2}_\omega$ for a unique $\omega \in 2^n + \{\top\}$.  If $\varphi(0) = 0$ then $\varphi$ is a homomorphism of join semilattices (or $2$-modules) so since the theory of $2$-modules $\Mat_2$ is a balanced subtheory of $\Set_2$ \pbref{exa:lth_slats_bal} it follows that $\varphi$ is one of the $2$-module operations \eqref{eq:linear_comb_ops} carried by $2$.  More precisely, there is a unique row vector $\omega \in 2^n = \Mat_2(n,1)$ such that $\varphi:2^n \rightarrow 2$ is given by $x \mapsto \bigvee_{i = 1}^n (\omega_i \wedge x_i)$, and $\omega$ is then the unique element of $2^n + \{\top\}$ with $\underline{2}_\omega = \varphi$.  On the other hand, if $\varphi(0) \neq 0$ then $\varphi(0) = 1$, but $\varphi$ preserves binary joins and hence is monotone, so $\varphi:2^n \rightarrow 2$ must be the constant map with value $1$ and hence we find that $\omega = \top$ is the unique element $\omega \in 2^n + \{\top\}$ with $\underline{2}_\omega = \varphi$.
\end{proof}

\section{The commutant of the theory of affine spaces over a ring}\label{sec:cmtnt_of_th_cvx_sp}

The remainder of the paper is devoted to showing that the theories of left $R$-affine spaces and pointed right $R$-modules are mutual commutants in $\Set_R$ for many rigs $R$.  In the present section, we show that this holds for all rings.

\begin{ParSub}\label{par:can_mor}
Given an arbitrary rig $R$, we will consider $\Mat^*_{R^\op}$ and $\Mat^\aff_R$ as theories over $\Set_R$, via the morphisms to $\Set_R$ determined by $R$ (\bref{par:ptd_rmod}, \bref{thm:taffr_as_cmtnt}).  By \bref{thm:taffr_as_cmtnt} we know that these theories over $\Set_R$ commute, so the canonical morphism $\Phi^R:\Mat_{R^\op}^* \rightarrow \Set_R$ factors through the inclusion $(\Mat^\aff_R)^\perp \hookrightarrow \Set_R$ via a unique morphism
\begin{equation}\label{eq:can_mor}\Phi^R:\Mat_{R^\op}^* \rightarrow (\Mat^\aff_R)^\perp\end{equation}
for which we use the same notation $\Phi^R$.  By \bref{par:ptd_rmod}, the components $\Mat_{R^\op}^*(n,1) \rightarrow (\Mat^\aff_R)^\perp(n,1)$ of this morphism are the maps
\begin{equation}\label{eq:can_map}\Phi^R_{(-)}:R^{1 + n} \rightarrow \Aff{R}(R^n,R)\end{equation}
that send each element $w = (w_0,...,w_n)$ of $R^{1 + n}$ to the $R$-affine map $\Phi^R_w:R^n \rightarrow R$ given by
$$\Phi^R_w(x) = w_0 + \sum_{i = 1}^n x_iw_i\;.$$
\end{ParSub}

\begin{ThmSub}\label{thm:cmtnt_of_th_of_raff_sp}
Given a ring $R$, the commutant $(\Mat^\aff_R)^\perp$ with respect to $R$ of the theory $\Mat^\aff_R$ of left $R$-affine spaces is the theory $\Mat_{R^\op}^*$ of pointed right $R$-modules.  Indeed, the morphism \eqref{eq:can_mor} is an isomorphism
$$\Mat_{R^\op}^* \;\;\cong\;\; (\Mat^\aff_R)^\perp$$
in the category of Lawvere theories over the full finitary theory $\Set_R$ of $R$ in $\Set$.
\end{ThmSub}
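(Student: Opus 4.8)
The plan is to verify directly that the morphism $\Phi^R$ of \eqref{eq:can_mor} is an isomorphism. Since $\Phi^R$ is already a morphism of Lawvere theories over $\Set_R$ by its construction in \bref{par:can_mor}, and since (by \bref{par:talgs}) a morphism of theories is an isomorphism as soon as all of its components $A_{n,1}$ are bijective, it suffices to show that each component
$$\Phi^R_{(-)}:R^{1+n} \longrightarrow \Aff{R}(R^n,R)$$
of \eqref{eq:can_map} is a bijection. Of the two verifications, injectivity is routine and surjectivity carries the real content.

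For injectivity, if $\Phi^R_w = \Phi^R_{w'}$ then evaluating at the zero column vector of $R^n$ yields $w_0 = w_0'$, and evaluating at the $i$-th standard basis column vector yields $w_0 + w_i = w_0' + w_i'$; cancelling $w_0$ (possible since $R$ is a ring) gives $w_i = w_i'$ for each $i$, so $w = w'$.

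For surjectivity, let $\varphi:R^n \rightarrow R$ be an arbitrary left $R$-affine map, i.e.\ a $\Mat_R^\aff$-homomorphism. By \bref{par:cat_talgs} this means precisely that $\varphi\bigl(\textstyle\sum_j \lambda_j v^{(j)}\bigr) = \sum_j \lambda_j\,\varphi(v^{(j)})$ whenever $[\lambda_1,\dots,\lambda_m]$ is a row vector over $R$ with $\sum_j \lambda_j = 1$ and $v^{(1)},\dots,v^{(m)} \in R^n$. Set $c := \varphi(0)$ and $\psi := \varphi - c : R^n \rightarrow R$ (here again we use that $R$ is a ring). Applying the affine condition to the row vector $[1,1,-1]$, whose entries sum to $1$, on the triple $(x,y,0)$ shows $\varphi(x+y) = \varphi(x) + \varphi(y) - c$, so $\psi$ is additive; applying it to the row vector $[r,1-r]$ on the pair $(x,0)$ shows $\varphi(rx) = r\varphi(x) + (1-r)c$, hence $\psi(rx) = r\psi(x)$. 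Thus $\psi$ is left $R$-linear, so $\psi(x) = \sum_{i=1}^n x_i\,\psi(e_i)$ where $e_i$ is the $i$-th standard basis column vector of $R^n$, and therefore $\varphi(x) = c + \sum_{i=1}^n x_i\,\psi(e_i) = \Phi^R_w(x)$ for $w = (c,\psi(e_1),\dots,\psi(e_n)) \in R^{1+n}$. This exhibits $\varphi$ in the image of $\Phi^R_{(-)}$ and completes the argument.

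The only genuinely non-formal ingredient is the use of additive inverses in $R$ to form the affine combinations with coefficient rows $[1,1,-1]$ and $[r,1-r]$; this is exactly the feature that fails for a general rig, as is already visible for the rig $2$, where the commutant of $\Mat_2^\aff$ turns out to be strictly larger than $\Mat_2^*$ (\bref{thm:th_uslats_slatswtop_cmtnts}). I anticipate no further obstacle.
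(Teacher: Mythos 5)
Your proposal is correct and takes essentially the same approach as the paper: both reduce to showing each component $\Phi^R_{(-)}:R^{1+n}\to\Aff{R}(R^n,R)$ is bijective, with the same inverse $w = (\varphi(0),\varphi(b_1)-\varphi(0),\dots,\varphi(b_n)-\varphi(0))$ and the same essential use of additive inverses. The only cosmetic difference is that the paper obtains surjectivity in one step by writing $x = \bigl(1-\sum_i x_i\bigr)0 + \sum_i x_i b_i$ as a single affine combination, whereas you route through the standard decomposition of $\varphi$ as $\varphi(0)$ plus a linear map via the combinations $[1,1,-1]$ and $[r,1-r]$.
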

\begin{proof}
Let $n \in \NN$.  For each left $R$-affine map $\psi:R^n \rightarrow R$ let us denote by
$$w^\psi = (w_0^\psi,w_1^\psi,...,w_n^\psi)$$
the element of $R^{1 + n}$ defined by
$$w^\psi_0 = \psi(0),\;\;\;\;\;\;w^\psi_i = \psi(b_i) - \psi(0)\;\;\;(i = 1,...,n)$$
where $b_i = (b_{i1},...,b_{in}) \in R^n$ is the $i$-th standard basis vector (with $b_{ii} = 1$ and $b_{ij} = 0$ for $j \neq i$).  This defines a map
$$w^{(-)}:\Aff{R}(R^n,R) \rightarrow R^{1 + n}$$
which we claim is inverse to the mapping $\Phi^R_{(-)}$ of \eqref{eq:can_map}.  Indeed, it is easy to see that $w^{(-)}$ is a retraction of $\Phi^R_{(-)}$, so it suffices to show that for any left $R$-affine map $\psi:R^n \rightarrow R$, if we let $w = w^\psi$ then $\Phi^R_w:R^n \rightarrow R$ is exactly $\psi$.  To this end, observe that any element $x = (x_1,...,x_n)$ of $R^n$ can be expressed as a left $R$-affine combination
$$x = \left(1 - \sum_{i = 1}^n x_i\right)0 \;\;+\;\; \sum_{i = 1}^n x_ib_i$$
of the elements $0,b_1,...,b_n$ of $R^n$, so $\psi$ necessarily sends $x$ to
$$
\begin{array}{ccccc}
\displaystyle{\psi(x)} & = & \displaystyle{\left(1 - \sum_{i = 1}^n x_i\right)\psi(0) \;\;+\;\; \sum_{i = 1}^n x_i\psi(b_i)} & &\\
           & = & \displaystyle{\psi(0) \;\;+\;\;\sum_{i = 1}^n x_i(\psi(b_i) - \psi(0))} & = &  \displaystyle{\Phi^R_w(x)}\;.
\end{array}
$$
\end{proof}

By \bref{thm:taffr_as_cmtnt}, we obtain the following:

\begin{CorSub}\label{thm:raff_prmod_cmtnts_for_ringr}
For a ring $R$, the Lawvere theory of left $R$-affine spaces and the Lawvere theory of pointed right $R$-modules are mutual commutants in the full finitary theory of $R$ in $\Set$.
\end{CorSub}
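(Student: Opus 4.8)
The plan is to derive this statement by combining the two main theorems already established (one in this section, one in the previous one) with the definition of \emph{mutual commutants} (\bref{def:sat_bal}). Recall that, for theories $\T$ and $\sS$ over a Lawvere theory $\U$, being mutual commutants in $\U$ means precisely that $\T \cong \sS^\perp$ and $\T^\perp \cong \sS$ as theories over $\U$. So, taking $\U = \Set_R$, $\T = \Mat_R^\aff$ and $\sS = \Mat_{R^\op}^*$ (each regarded as a theory over $\Set_R$ via the morphism determined by $R$, as in \bref{par:ptd_rmod} and \bref{thm:taffr_as_cmtnt}), it suffices to exhibit the two isomorphisms $\Mat_R^\aff \cong (\Mat_{R^\op}^*)^\perp$ and $(\Mat_R^\aff)^\perp \cong \Mat_{R^\op}^*$ over $\Set_R$.

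The first isomorphism is exactly \bref{thm:taffr_as_cmtnt}, which in fact holds for an arbitrary rig $R$, so nothing further is needed there. The second isomorphism is \bref{thm:cmtnt_of_th_of_raff_sp}; it is here that the hypothesis that $R$ be a \emph{ring} is genuinely used, since the proof of that theorem writes an arbitrary $x = (x_1,\dots,x_n) \in R^n$ as the left $R$-affine combination $\bigl(1 - \sum_{i=1}^n x_i\bigr)0 + \sum_{i=1}^n x_i b_i$ of the points $0, b_1, \dots, b_n$, which requires the coefficient $1 - \sum_{i=1}^n x_i$ to exist, i.e.\ requires additive inverses in $R$. Granting both theorems, the two displayed isomorphisms over $\Set_R$ hold simultaneously, and hence $\Mat_R^\aff$ and $\Mat_{R^\op}^*$ are mutual commutants in $\Set_R$ by \bref{def:sat_bal}.

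At the level of this corollary there is essentially no obstacle: the entire substance lies in \bref{thm:cmtnt_of_th_of_raff_sp}, and the only point one should be slightly careful about is that the isomorphism furnished there is the \emph{canonical} morphism $\Phi^R$ of \eqref{eq:can_mor}, compatible with the morphisms to $\Set_R$ — but this is already part of the statement of that theorem, so quoting it verbatim together with \bref{thm:taffr_as_cmtnt} completes the argument.
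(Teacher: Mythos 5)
Your proposal is correct and matches the paper's own derivation exactly: the corollary is obtained by combining Theorem \bref{thm:taffr_as_cmtnt} (giving $\Mat_R^\aff \cong (\Mat_{R^\op}^*)^\perp$ over $\Set_R$) with Theorem \bref{thm:cmtnt_of_th_of_raff_sp} (giving $(\Mat_R^\aff)^\perp \cong \Mat_{R^\op}^*$ over $\Set_R$) and applying the definition of mutual commutants in \bref{def:sat_bal}. Your remarks on where the ring hypothesis enters and on the compatibility of the isomorphisms with the morphisms to $\Set_R$ are accurate but not needed beyond citing the two theorems.
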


\section{The commutant of the theory of convex spaces for a preordered ring}\label{sec:cmtnt_th_cvx_sp}

Having shown that theories of left $R$-affine spaces and pointed right $R$-modules for a ring $R$ are mutual commutants in $\Set_R$ \pbref{thm:raff_prmod_cmtnts_for_ringr}, we now set ourselves to the task of widening the applicability of this result to include certain rigs that are not rings.  In particular, we will focus on \textit{additively cancellative} rigs $S$ \pbref{par:pr_rings}, which are precisely the positive parts $S = R_+$ of preordered rings $R$.  Our study of the commutant of the theory of $R_+$-affine spaces (which we also call $R$-convex spaces) begins with the following observations:

\begin{LemSub}\label{thm:aff_extn_prop}
Let $R$ be a preordered ring, and let $\varphi:R^n_+ \rightarrow R_+$ be a (left) $R_+$-affine map with $n \in \NN$.  Then $\varphi$ is a restriction of at most one $R$-affine map $\psi:R^n \rightarrow R$.  Further, the following are equivalent:
\begin{enumerate}
\item $\varphi$ is a restriction of some $R$-affine map $\psi:R^n \rightarrow R$.
\item $\varphi$ is a restriction of the $R$-affine map $\Phi^R_{w^\varphi}:R^n \rightarrow R$ \pbref{par:can_mor} determined by
$$w^\varphi = (\varphi(0),\varphi(b_1) - \varphi(0),...,\varphi(b_n) - \varphi(0)) \in R^{1 + n}\;,$$
where $b_i \in R_+^n$ is the $i$-th standard basis vector.
\item For every element $x = (x_1,...,x_n) \in R_+^n$,
\begin{equation}\label{eq:affine_fnl_on_sn}\varphi(x) = \varphi(0) + \sum_{i = 1}^n x_i(\varphi(b_i) - \varphi(0))\end{equation}
in $R$.
\end{enumerate}
\end{LemSub}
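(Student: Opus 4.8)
The plan is to reduce everything to the ``affine basis'' computation that already appears in the proof of \bref{thm:cmtnt_of_th_of_raff_sp}: any $x = (x_1,\dots,x_n) \in R^n$ is the left $R$-affine combination
$$x = \left(1 - \sum_{i=1}^n x_i\right) 0 \;+\; \sum_{i=1}^n x_i\, b_i$$
of the elements $0, b_1, \dots, b_n$, so every left $R$-affine map $\psi : R^n \rightarrow R$ satisfies $\psi(x) = \psi(0) + \sum_{i=1}^n x_i\bigl(\psi(b_i) - \psi(0)\bigr)$ for all $x \in R^n$. Since $0,1 \in R_+$ we have $0 \in R_+^n$ and each $b_i \in R_+^n$, so if $\psi$ restricts $\varphi$ then $\psi(0) = \varphi(0)$ and $\psi(b_i) = \varphi(b_i)$; substituting into the displayed identity gives $\psi(x) = \varphi(0) + \sum_{i=1}^n x_i(\varphi(b_i) - \varphi(0)) = \Phi^R_{w^\varphi}(x)$ for every $x \in R^n$, where $\Phi^R$ is the map of \bref{par:can_mor}. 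This single computation delivers at once both the uniqueness clause (any $R$-affine extension of $\varphi$ must equal $\Phi^R_{w^\varphi}$) and the implication $(1) \Rightarrow (2)$.

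It then remains to close the cycle $(2) \Rightarrow (3) \Rightarrow (1)$. For $(2) \Rightarrow (3)$: if $\varphi$ is the restriction of $\Phi^R_{w^\varphi}$, then for $x \in R_+^n$ we have $\varphi(x) = \Phi^R_{w^\varphi}(x) = w^\varphi_0 + \sum_i x_i w^\varphi_i$, and substituting $w^\varphi_0 = \varphi(0)$ and $w^\varphi_i = \varphi(b_i) - \varphi(0)$ gives precisely \eqref{eq:affine_fnl_on_sn}. For $(3) \Rightarrow (1)$: equation \eqref{eq:affine_fnl_on_sn} says exactly that $\varphi(x) = \Phi^R_{w^\varphi}(x)$ for all $x \in R_+^n$, i.e.\ that $\varphi$ is the restriction to $R_+^n$ of the map $\Phi^R_{w^\varphi}$, which is left $R$-affine by \bref{par:can_mor}; hence (1) holds, witnessed by $\psi = \Phi^R_{w^\varphi}$.

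I do not anticipate a genuine obstacle: the statement is a direct unwinding of definitions once the affine basis identity is in hand, and the few displayed equalities above are just substitutions of the defining formulas for $w^\varphi$ and $\Phi^R_{w^\varphi}$. The one point worth flagging is that the combination $x = (1 - \sum_i x_i)0 + \sum_i x_i b_i$ is an $R$-affine but \emph{not} in general an $R_+$-affine combination, since $1 - \sum_i x_i$ need not lie in $R_+$. This is exactly why conditions (1)--(3) impose a real constraint on an $R_+$-affine map $\varphi$ rather than holding automatically, and it signals that the substantive work is deferred to the later sections, where one must determine when such an $R$-affine extension actually exists.
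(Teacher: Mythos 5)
Your proof is correct and follows essentially the same route as the paper's: both rest on the observation from the proof of \bref{thm:cmtnt_of_th_of_raff_sp} that an $R$-affine map $R^n \rightarrow R$ is determined by its values at $0, b_1, \dots, b_n$, which yields the uniqueness clause and the equivalence of (1) and (2) at once, with (2) $\Leftrightarrow$ (3) being a direct unwinding of the definitions. Your closing remark correctly identifies why the conditions are not automatic for an $R_+$-affine map.
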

\begin{proof}
It is clear from the proof of \bref{thm:cmtnt_of_th_of_raff_sp} that an $R$-affine map $\psi:R^n \rightarrow R$ is uniquely determined by its values on the elements $0,b_1,...,b_n$ of $R^n$ and so is uniquely determined by its restriction to $R_+^n$.  The equivalence of 2 and 3 is immediate from the definitions, and clearly 2 implies 1.  If 1 holds, then $\varphi$, $\psi$, and $\Phi^R_{w^\varphi}$ all agree on the elements $0,b_1,...,b_n$ of $R^n$, so $\psi$ and $\Phi^R_{w^\varphi}$ are equal and hence 2 holds.
\end{proof}

\begin{DefSub}
We say that a preordered ring $R$ has the \textbf{affine extension property} if every $R_+$-affine map $\varphi:R_+^n \rightarrow R_+$ $(n \in \NN)$ satisfies the equivalent conditions of \bref{thm:aff_extn_prop}.
\end{DefSub}

We shall find that the affine extension property is a necessary condition for the Lawvere theories of left $R$-convex spaces and pointed right $R_+$-modules to be mutual commutants in $\Set_{R_+}$.  In order to obtain necessary and sufficient conditions, we will also need to impose a certain weakening of the \textit{archimedean property}.  The familiar archimedean property for totally ordered fields has been generalized to the context of partially ordered rings and abelian groups in a number of slightly different ways in the literature.  We shall now recall one common definition, which appears for example in \cite{Kea}, and then proceed to define the weaker property that we shall require.

\begin{DefSub}
Let $R$ be a preordered ring.
\begin{enumerate}
\item We say that $R$ is \textbf{archimedean} provided that for each element $r \in R$, if $\{nr \mid n \in \NN\}$ has an upper bound in $R$ then $r \lt 0$.
\item Given an element $r \in R$, we call the subset $\{sr \mid s \in R_+\} \subseteq R$ the \textbf{(left) ray} of $r$.
\item We say that $R$ is \textbf{(left) auto-archimedean} provided that for every element $r$ of $R$, if the ray of $r$ has an upper bound in $R$, then $r \lt 0$.
\end{enumerate}
\end{DefSub}

\begin{RemSub}
Observe that an archimedean preordered ring $R$ is necessarily auto-archimedean.
\end{RemSub}

\begin{RemSub}\label{rem:rephr_def_autoarch}
By considering the additive inverse $-r$ of each $r \in R$, we find that a preordered ring $R$ is auto-archimedean precisely when for every $r \in R$, if the ray of $r$ has a lower bound in $R$, then $r \gt 0$.
\end{RemSub}

\begin{ExaSub}[\textbf{The real numbers}]
The totally ordered ring of real numbers $\RR$ is archimedean and hence auto-archimedean.  It is well-known that $\RR$ also has the affine extension property, and in \bref{thm:suff_conds_for_aff_ext_prop} we will prove a more general result that entails this fact.
\end{ExaSub}

\begin{ExaSub}[\textbf{The integers}]\label{exa:ints}
The ring of integers $\ZZ$ under the natural order is archimedean, but $\ZZ$ does not have the affine extension property.  Indeed, $\ZZ_+ = \NN$ and \textit{every} mapping $\NN^n \rightarrow \NN$ is $\NN$-affine $(n \in \NN)$, so for example the mapping $\varphi = 2^{(-)}:\NN \rightarrow \NN$ is $\NN$-affine but does not extend to a $\ZZ$-affine map $\ZZ \rightarrow \ZZ$.
\end{ExaSub}

\begin{LemSub}\label{thm:ray_bdd}
For a preordered ring $R$, the following are equivalent:
\begin{enumerate}
\item $R$ is auto-archimedean.
\item For all $r,c \in R$, if the $R$-affine map $R \rightarrow R$ given by $x \mapsto c + xr$ maps $R_+$ into $R_+$, then $r,c \in R_+$.
\item For every $n \in \NN$ and every $w \in R^{1 + n}$, if the associated $R$-affine map $\Phi^R_w:R^n \rightarrow R$ maps $R^n_+$ into $R_+$, then $w \in R_+^{1 + n}$.
\end{enumerate}
\end{LemSub}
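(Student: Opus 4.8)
The plan is to establish the cycle of implications $1 \Rightarrow 2 \Rightarrow 3 \Rightarrow 1$, each being a short computation once the order relation is tracked carefully. Throughout I read $a \lt b$ as $b - a \in R_+$, so that for an $R$-affine map $f$ the assertion ``$f$ maps $R^n_+$ into $R_+$'' is the same as ``$f(x) \gt 0$ for all $x \in R^n_+$''.

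For $1 \Rightarrow 2$ I would assume $R$ auto-archimedean and suppose the map $x \mapsto c + xr$ carries $R_+$ into $R_+$. Evaluating at $x = 0$ gives $c \in R_+$ at once. For $r \in R_+$: the hypothesis says $-c \lt xr$ for every $x \in R_+$, so $-c$ is a lower bound for the ray of $r$; by the lower-bound reformulation of the auto-archimedean property in \bref{rem:rephr_def_autoarch}, this forces $r \gt 0$, i.e.\ $r \in R_+$. For $2 \Rightarrow 3$ I would assume 2 and suppose $\Phi^R_w$ maps $R^n_+$ into $R_+$, where $w = (w_0, w_1, \dots, w_n)$. Evaluating $\Phi^R_w$ at $0$ yields $w_0 \in R_+$, and for each $j \in \{1, \dots, n\}$ restricting $\Phi^R_w$ to the $j$-th coordinate axis (set $x_j = t$ and all other coordinates to $0$) gives the $R$-affine map $t \mapsto w_0 + t w_j$, which carries $R_+$ into $R_+$; applying 2 with $c = w_0$ and $r = w_j$ gives $w_j \in R_+$. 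Hence $w \in R_+^{1+n}$.

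For $3 \Rightarrow 1$ I would invoke \bref{rem:rephr_def_autoarch} once more: it suffices to show that whenever the ray of some $r \in R$ admits a lower bound $d$, so that $d \lt xr$ for all $x \in R_+$, one has $r \in R_+$. But $d \lt xr$ for all $x \in R_+$ says precisely that the $R$-affine map $\Phi^R_w : R^1 \rightarrow R$ associated with $w = (-d, r) \in R^{1+1}$ carries $R^1_+ = R_+$ into $R_+$, so 3 in the case $n = 1$ gives $w \in R_+^{2}$, and in particular $r \in R_+$.

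I do not expect a genuine obstacle here; the only point requiring attention is the sign bookkeeping. The definition of auto-archimedean is phrased with an \emph{upper} bound on a ray, whereas the condition ``$\Phi^R_w(x) \in R_+$'' naturally produces a \emph{lower} bound (namely $-w_0$) on $\sum_i x_i w_i$; reconciling these is exactly the purpose of \bref{rem:rephr_def_autoarch}, and each of the three implications above uses it in that role.
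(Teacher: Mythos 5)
Your proposal is correct and follows essentially the same route as the paper: both arguments hinge on the lower-bound reformulation of the auto-archimedean property from \bref{rem:rephr_def_autoarch} together with restricting $\Phi^R_w$ to the coordinate axes, the only difference being that you arrange the implications as a cycle $1 \Rightarrow 2 \Rightarrow 3 \Rightarrow 1$ while the paper proves $1 \Leftrightarrow 2$ and $2 \Leftrightarrow 3$ directly. No gaps.
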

\begin{proof}
By \bref{rem:rephr_def_autoarch}, 1 holds iff
$$\forall r,b \in R \;:\; (\forall s \in R_+ \;:\; b \lt sr)\;\Rightarrow\; r \in R_+\;,$$
and (by taking $c = -b$) we find that this holds iff
\begin{equation}\forall r,c \in R \;:\; (\forall s \in R_+ \;:\; c + sr \in R_+)\;\Rightarrow\; r \in R_+\;.\end{equation}
Hence 1 holds iff for all $r,c \in R$, if the map $R \rightarrow R$ given by $x \mapsto c + xr$ maps $R_+$ into $R_+$, then $r \in R_+$.  But if $c + xr \in R_+$ for all $x \in R_+$ then we necessarily have $c \in R_+$, so 1 is equivalent to 2.  Also, 3 clearly implies 2.  Assume 2 holds, and suppose that $\Phi^R_w:R^n \rightarrow R$ maps $R^n_+$ into $R_+$, where $w = (w_0,w_1,...,w_n)$ is an element of $R^{1 + n}$.  For each $i = 1,...,n$ we have a mapping $(-) \cdot b_i:R \rightarrow R^n$ given by $r \mapsto rb_i$ where $b_i \in R^n$ is the $i$-th standard basis vector, and the composite
$$\varphi_i = \left(R \xrightarrow{(-) \cdot b_i} R^n \xrightarrow{\Phi^R_w} R\right)$$
is given by $\varphi_i(x) = \Phi^R_w(xb_i) = w_0 + xw_i$.  But since $\Phi^R_w$ maps $R^n_+$ into $R_+$ it follows that $\varphi_i$ maps $R_+$ into $R_+$, so by 2 we deduce that $w_0,w_i \in R_+$.  Therefore if $n \gt 1$ then $w \in R_+^{1 + n}$, but if $n = 0$ then it is readily seen that $w = w_0 \in R_+^{1}$. Hence 3 holds.
\end{proof}

\begin{LemSub}\label{lem:can_mor_inj}
For each $n \in \NN$, the composite
$$R_+^{1 + n} \xrightarrow{\Phi^{R_+}_{(-)}} \Aff{R_+}(R_+^n,R_+) \xrightarrow{w^{(-)}} R^{1 + n}$$
is the inclusion $R_+^{1 + n} \hookrightarrow R^{1 + n}$, where $\Phi^{R_+}_{(-)}$ is the mapping \eqref{eq:can_map} associated to the rig $R_+$ and $w^{(-)}$ is the mapping sending an $R_+$-affine map $\varphi$ to the vector $w^\varphi \in R^{1 + n}$ of \bref{thm:aff_extn_prop}.  Equivalently, if $\varphi = \Phi^{R_+}_w$ then $w = w^\varphi$.  In particular, $\Phi^{R_+}_{(-)}$ is injective.
\end{LemSub}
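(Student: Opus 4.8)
The plan is to unwind the two maps in the composite and compute directly. Fix $n \in \NN$ and an arbitrary $w = (w_0,w_1,\dots,w_n) \in R_+^{1+n}$; the goal is to show that $w^{\Phi^{R_+}_w} = w$ as an element of $R^{1+n}$, which is precisely the assertion that $w^{(-)} \circ \Phi^{R_+}_{(-)}$ is the inclusion $R_+^{1+n} \hookrightarrow R^{1+n}$.

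First I would record the relevant values of the $R_+$-affine map $\varphi := \Phi^{R_+}_w : R_+^n \to R_+$. By \bref{par:ptd_rmod}, applied to the rig $R_+$, we have $\varphi(x) = w_0 + \sum_{i=1}^n x_iw_i$ for $x \in R_+^n$, so $\varphi(0) = w_0$, and for each $i \in \{1,\dots,n\}$ the $i$-th standard basis vector $b_i \in R_+^n$ (with $i$-th entry $1$ and all others $0$) gives $\varphi(b_i) = w_0 + w_i$. Now $w^\varphi$ is, by definition \pbref{thm:aff_extn_prop}, the vector $(\varphi(0),\varphi(b_1)-\varphi(0),\dots,\varphi(b_n)-\varphi(0))$, with the subtractions computed in the preordered ring $R$. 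Substituting, $w^\varphi_0 = w_0$ and $w^\varphi_i = (w_0 + w_i) - w_0 = w_i$ in $R$, so $w^\varphi = w$, which is exactly the equivalent formulation ``if $\varphi = \Phi^{R_+}_w$ then $w = w^\varphi$'' and hence the lemma. Injectivity of $\Phi^{R_+}_{(-)}$ is then immediate: the composite $w^{(-)} \circ \Phi^{R_+}_{(-)}$, being the inclusion of $R_+^{1+n}$ into $R^{1+n}$, is injective, so its first factor $\Phi^{R_+}_{(-)}$ is injective.

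There is no substantial obstacle here; the only point worth flagging is that the subtraction in the definition of $w^\varphi$ takes place in the ambient ring $R$, not in the rig $R_+$ where subtraction need not exist. But since $\varphi(b_i) = w_0 + w_i$ already holds in $R_+$ and $R$ is a ring, the cancellation $(w_0 + w_i) - w_0 = w_i$ is unproblematic, and it does not even invoke additive cancellativity of $R_+$.
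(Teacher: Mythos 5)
Your proof is correct, and it takes a more direct route than the paper's. The paper derives the identity $w = w^\varphi$ by citing two earlier results: it observes that $\varphi = \Phi^{R_+}_w$ is a restriction of the $R$-affine map $\Phi^R_w$, invokes Lemma \bref{thm:aff_extn_prop} to conclude $\Phi^R_w = \Phi^R_{w^\varphi}$, and then uses the bijectivity of $\Phi^R_{(-)}$ for the \emph{ring} $R$ (established in the proof of Theorem \bref{thm:cmtnt_of_th_of_raff_sp}) to cancel and obtain $w = w^\varphi$. You instead just evaluate $\varphi$ at $0$ and at the basis vectors $b_i$, getting $\varphi(0) = w_0$ and $\varphi(b_i) = w_0 + w_i$, so that $w^\varphi = (w_0, (w_0+w_1)-w_0, \dots) = w$ by cancellation in the ambient ring. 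The two arguments have the same mathematical content --- your computation is essentially the ``easy to see'' retraction claim from the proof of \bref{thm:cmtnt_of_th_of_raff_sp}, specialized to $R_+$ --- but your version is self-contained and makes explicit that nothing beyond the embedding of $R_+$ in a ring is needed, whereas the paper's version economizes by reusing already-proved statements. Your closing remark correctly flags the one point of care (the subtraction lives in $R$, not $R_+$), and your deduction of injectivity of $\Phi^{R_+}_{(-)}$ from injectivity of the composite is sound.
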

\begin{proof}
Any $R_+$-affine map of the form $\varphi = \Phi^{R_+}_w$ is a restriction of the $R$-affine map $\Phi^R_w:R^n \rightarrow R$ and and so by \bref{thm:aff_extn_prop} we deduce that $\Phi^R_w = \Phi^R_{w^\varphi}$, whence $w = w^\varphi$ by \bref{thm:cmtnt_of_th_of_raff_sp}.
\end{proof}

\begin{ThmSub}\label{thm:cvx_cmtnt_charn}
The following are equivalent for a preordered ring $R$:
\begin{enumerate}
\item $R$ is auto-archimedean and has the affine extension property.
\item The commutant with respect to $R_+$ of the theory $\Mat_{R_+}^\aff$ of left $R$-convex spaces (i.e., left $R_+$-affine spaces) is isomorphic to the theory of pointed right $R_+$-modules $\Mat_{R_+^\op}^*$, where both theories are considered as Lawvere theories over $\Set_{R_+}$.
\end{enumerate}
\end{ThmSub}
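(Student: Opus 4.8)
The plan is to realise condition~2 concretely via the canonical morphism and then read off both implications from the lemmas already proved. Recall that \bref{par:can_mor}, applied to the rig $R_+$, gives a morphism $\Phi^{R_+}\colon \Mat_{R_+^\op}^* \to (\Mat_{R_+}^\aff)^\perp$ of theories over $\Set_{R_+}$ whose components are the maps $\Phi^{R_+}_{(-)}\colon R_+^{1+n} \to (\Mat_{R_+}^\aff)^\perp(n,1) = \Aff{R_+}(R_+^n,R_+)$, and by \bref{lem:can_mor_inj} each such map is injective. Consequently the structure morphism $\Mat_{R_+^\op}^* \to \Set_{R_+}$ is faithful, so $\Mat_{R_+^\op}^*$ is a subtheory of $\Set_{R_+}$ contained in $(\Mat_{R_+}^\aff)^\perp$; since $\SubTh(\Set_{R_+})$ is a preordered set \pbref{par:cat_th}, condition~2 holds precisely when the embedding $\Phi^{R_+}\colon \Mat_{R_+^\op}^* \hookrightarrow (\Mat_{R_+}^\aff)^\perp$ is an isomorphism, and by \bref{par:talgs} this happens precisely when every component $\Phi^{R_+}_{(-)}$ is bijective, i.e.\ (by injectivity) surjective. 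So it suffices to prove that $\Phi^{R_+}_{(-)}$ is surjective for all $n$ if and only if $R$ is auto-archimedean and has the affine extension property.

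For the direction ``$\Leftarrow$'', let $\varphi\colon R_+^n \to R_+$ be $R_+$-affine. By the affine extension property and \bref{thm:aff_extn_prop}, $\varphi$ is the restriction of the $R$-affine map $\Phi^R_{w^\varphi}\colon R^n \to R$ with $w^\varphi = (\varphi(0),\varphi(b_1)-\varphi(0),\dots,\varphi(b_n)-\varphi(0)) \in R^{1+n}$. Since $\varphi$ is valued in $R_+$, the map $\Phi^R_{w^\varphi}$ carries $R_+^n$ into $R_+$, so auto-archimedeanness and \bref{thm:ray_bdd} force $w^\varphi \in R_+^{1+n}$; then $\varphi = \Phi^{R_+}_{w^\varphi}$ lies in the image of $\Phi^{R_+}_{(-)}$, as required.

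For the direction ``$\Rightarrow$'', assume each $\Phi^{R_+}_{(-)}$ is surjective. First, the affine extension property follows at once: any $R_+$-affine $\varphi$ equals $\Phi^{R_+}_w$ for some $w \in R_+^{1+n}$, hence $w = w^\varphi$ by \bref{lem:can_mor_inj}, so $\varphi$ is the restriction of $\Phi^R_{w^\varphi}$, which is condition~2 of \bref{thm:aff_extn_prop}. Second, to see that $R$ is auto-archimedean I verify condition~3 of \bref{thm:ray_bdd}: suppose $w \in R^{1+n}$ is such that $\Phi^R_w$ maps $R_+^n$ into $R_+$. The restriction $\varphi := \Phi^R_w|_{R_+^n}\colon R_+^n \to R_+$ is then $R_+$-affine, since an $R_+$-affine combination of elements of $R_+^n$ is in particular an $R$-affine combination in $R^n$ and $\Phi^R_w$, being $R$-affine, preserves it. By surjectivity $\varphi = \Phi^{R_+}_{w'}$ for some $w' \in R_+^{1+n}$; then $\Phi^R_w$ and $\Phi^R_{w'}$ are $R$-affine maps $R^n \to R$ with the same restriction to $R_+^n$, so they are equal by the uniqueness clause of \bref{thm:aff_extn_prop}, whence $w = w'$ by \bref{thm:cmtnt_of_th_of_raff_sp}. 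Thus $w \in R_+^{1+n}$, and \bref{thm:ray_bdd} gives that $R$ is auto-archimedean.

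The main difficulty is organisational rather than computational: one must pin down that the abstract ``isomorphism over $\Set_{R_+}$'' asserted in condition~2 is exactly surjectivity of the explicit maps $\Phi^{R_+}_{(-)}$ (this is where faithfulness of $\Phi^{R_+}$ and the poset structure of $\SubTh(\Set_{R_+})$ enter), and, in the ``$\Rightarrow$'' direction, one must check that the restriction to $R_+^n$ of an $R$-affine map which happens to land in $R_+$ is a bona fide morphism of $R_+$-affine spaces, so that surjectivity of $\Phi^{R_+}_{(-)}$ may legitimately be applied to it.
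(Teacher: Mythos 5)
Your proposal is correct and follows essentially the same route as the paper's own proof: reduce condition~2 to bijectivity (hence, given \bref{lem:can_mor_inj}, surjectivity) of the components $\Phi^{R_+}_{(-)}$, then translate surjectivity into the conjunction of the affine extension property (via \bref{thm:aff_extn_prop}) and auto-archimedeanness (via condition~3 of \bref{thm:ray_bdd}), using \bref{thm:cmtnt_of_th_of_raff_sp} for the injectivity of $w \mapsto \Phi^R_w$. The only cosmetic difference is that the paper routes the argument through intermediate reformulations $2'$ and $2''$ of condition~2, while you argue directly with surjectivity; you also spell out explicitly why the restriction of $\Phi^R_w$ to $R_+^n$ is $R_+$-affine, a point the paper leaves implicit.
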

\begin{proof}
By \bref{par:can_mor}, we know that there is a unique morphism $\Mat_{R_+^\op}^* \rightarrow (\Mat_{R_+}^\aff)^\perp$ in $\Th\slash \Set_{R_+}$, namely $\Phi^{R_+}$, so 2 holds iff the mapping 
$$\Phi^{R_+}_{(-)}:R_+^{1 + n} \rightarrow \Aff{R_+}(R^n_+,R_+)$$
is a bijection for each $n \in \NN$.  But by \bref{lem:can_mor_inj} this mapping is necessarily injective and has as its image the set of all $R_+$-affine maps $\varphi:R_+^n \rightarrow R_+$ such that the element $w^\varphi$ of $R^{1 + n}$ lies in $R^{1 + n}_+$ and $\varphi = \Phi^{R_+}_{w^\varphi}$.  Hence 2 holds iff
\begin{enumerate}
\item[$2'$.]  For each $R_+$-affine map $\varphi:R^n_+ \rightarrow R_+$, $w^\varphi \in R_+^{1 + n}$ and $\varphi = \Phi^{R_+}_{w^\varphi}$.
\end{enumerate}
This is clearly equivalent to the following:
\begin{enumerate}
\item[$2''$.]  For each $R_+$-affine map $\varphi:R^n_+ \rightarrow R_+$, $w^\varphi \in R_+^{1 + n}$ and $\varphi$ is a restriction of $\Phi^R_{w^\varphi}:R^n \rightarrow R$.
\end{enumerate}
If 1 holds then this holds, since if $\varphi:R^n_+ \rightarrow R_+$ is $R_+$-affine then the affine extension property entails that $\varphi$ is a restriction of $\Phi^R_{w^\varphi}$ \pbref{thm:aff_extn_prop}, and since $R$ is auto-archimedean \bref{thm:ray_bdd} entails that $w^\varphi \in R_+^{1 + n}$.  Conversely, suppose that $2''$ holds.   Then $R$ clearly has the affine extension property, and we show by way of condition 3 in \bref{thm:ray_bdd} that $R$ is auto-archimedean.  Suppose that $\Phi^R_w:R^n \rightarrow R$ maps $R_+^n$ into $R_+$.  Then its restriction $\varphi:R^n_+ \rightarrow R_+$ is $R_+$-affine, and by $2''$ we know that $\varphi$ is also a restriction of $\Phi^R_{w^\varphi}$.  Hence by \bref{thm:aff_extn_prop} we deduce that $\Phi^R_w =\Phi^R_{w^\varphi}$, so by \bref{thm:cmtnt_of_th_of_raff_sp} we find that $w = w^\varphi$, and by $2''$ we know that $w^\varphi \in R_+^{1 + n}$.
\end{proof}

\begin{ParSub}\label{par:dyadic}
Letting $d$ be a positive integer, we shall denote by $\ZZ[\frac{1}{d}]$ the subring of $\QQ$ consisting of all rational numbers that can be expressed in the form $\frac{p}{d^n}$ with $p \in \ZZ$ and $n \in \NN$.  Equivalently, $\ZZ[\frac{1}{d}]$ is the localization of $\ZZ$ at the element $d \in \ZZ$, i.e., the localization of $\ZZ$ with respect to the multiplicative subset $\{d^n \mid n \in \NN\} \subseteq \ZZ$.  We shall call $\ZZ[\frac{1}{d}]$ the ring of \textbf{$d$-adic fractions}.  The ring $\ZZ[\frac{1}{2}]$ is usually called the ring of \textbf{dyadic rationals}\footnote{However, the term \textit{$p$-adic rational} for a prime $p$ is often employed in other senses, in connection with the \textit{$p$-adic numbers}.  For this reason, we have chosen to use a different name for the localization $\ZZ[\frac{1}{d}]$.}.  Under the natural order that $\ZZ[\frac{1}{d}]$ inherits from $\QQ$, $\ZZ[\frac{1}{d}]$ is a preordered ring.
\end{ParSub}

\begin{ParSub}\label{par:uniq_mor_from_z}
For any preordered ring $R$, there is a unique morphism of preordered rings $e:\ZZ \rightarrow R$, and for each element $n \in \ZZ$ we denote the associated element $e(n)$ of $R$ by $n$, in accordance with the usual abuse of notation.
\end{ParSub}

\begin{PropSub}\label{thm:preord_dalg}
For a preordered ring $R$ and a positive integer $d$, the following are equivalent:
\begin{enumerate}
\item The element $d$ of $R$ is invertible and its inverse lies in $R_+$.
\item There is a unique morphism of preordered rings $e^\sharp:\ZZ[\frac{1}{d}] \rightarrow R$.
\end{enumerate}
\end{PropSub}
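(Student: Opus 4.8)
The plan is to reduce everything to the universal property of the localization $\ZZ\to\ZZ[\frac1d]$ \pbref{par:dyadic} in the category of ordinary rings, adding only a short check that the resulting ring homomorphism is monotone. Throughout I use that $\ZZ$ with its natural order is initial in $\Ring_\lt$ \pbref{par:uniq_mor_from_z}, so that the inclusion $\ZZ\hookrightarrow\ZZ[\frac1d]$ is a morphism of preordered rings and any morphism of preordered rings $e^\sharp:\ZZ[\frac1d]\to R$ restricts along it to the canonical $e:\ZZ\to R$; in particular $e^\sharp(n)=n$ in $R$ for each $n\in\ZZ$.

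First I would prove $2\Rightarrow 1$. Given a morphism of preordered rings $e^\sharp:\ZZ[\frac1d]\to R$, it is in particular a ring homomorphism with $e^\sharp(d)=d$, so $e^\sharp(\frac1d)$ is a two-sided inverse of $d$ in $R$; and since $0\lt\frac1d$ in $\ZZ[\frac1d]$ (whose order is inherited from $\QQ$) and $e^\sharp$ is monotone, $e^\sharp(\frac1d)\in R_+$. Hence $d$ is invertible in $R$ with inverse in $R_+$. (Only the existence half of condition 2 is needed here.)

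Then $1\Rightarrow 2$. Assume $d^{-1}$ exists in $R$ and lies in $R_+$. Since $e(d)=d$ is invertible in $R$, the universal property of localization yields a unique ring homomorphism $e^\sharp:\ZZ[\frac1d]\to R$ extending $e$, necessarily given on elements by $e^\sharp(\frac{p}{d^n})=p\,(d^{-1})^n$; moreover this is the only ring homomorphism $\ZZ[\frac1d]\to R$ whatsoever, since $\ZZ$ is initial in rings and such a homomorphism is forced on every element $\frac{p}{d^n}$. It then remains to check that $e^\sharp$ is monotone, i.e.\ that $e^\sharp\big((\ZZ[\tfrac1d])_+\big)\subseteq R_+$. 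Every non-negative $d$-adic fraction can be written as $\frac{p}{d^n}$ with $p\in\NN$ (clear the denominator), so $e^\sharp(\frac{p}{d^n})=p\,(d^{-1})^n$ lies in $R_+$ because $R_+$ is a subrig of $R$ and hence contains the image of $\NN$, contains $d^{-1}$ by hypothesis, and is closed under multiplication. Thus $e^\sharp$ is a morphism of preordered rings, and it is the unique one, being already the unique ring homomorphism.

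The only step that is not pure formality is the monotonicity check in $1\Rightarrow 2$, and that in turn rests on the elementary remark that every element of $(\ZZ[\frac1d])_+$ admits a representative $\frac{p}{d^n}$ with numerator in $\NN$; granting this, positivity is preserved simply because $R_+$ is a subrig containing $d^{-1}$. I therefore do not anticipate any genuine difficulty; one need only be careful about which half of condition 2 (existence versus uniqueness) is invoked in which direction of the equivalence.
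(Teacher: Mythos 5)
Your proof is correct and follows essentially the same route as the paper's: the forward direction via the universal property of the localization together with the initiality of $\ZZ$, followed by the same monotonicity check using that a positive $d$-adic fraction has a representative $\frac{p}{d^n}$ with non-negative numerator, and the converse by observing that a morphism of preordered rings carries the positive inverse of $d$ in $\ZZ[\frac{1}{d}]$ to one in $R$. No gaps.
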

\begin{proof}
The implication 2 $\Rightarrow$ 1 is immediate since $d$ is invertible in $\ZZ[\frac{1}{d}]$ and its inverse lies in $\ZZ[\frac{1}{d}]_+$.  Conversely if 1 holds then by using the universal property of the localization $\ZZ[\frac{1}{d}]$ of $\ZZ$ and the fact that $\ZZ$ is an initial object of the category of rings, we find that there is a unique ring homomorphism $e^\sharp:\ZZ[\frac{1}{d}] \rightarrow R$.  Further, $e^\sharp$ is monotone since if $\frac{p}{d^n} \in \ZZ[\frac{1}{d}]_+$ with $p \in \ZZ$ then $p \in \ZZ_+$ and hence $e^\sharp(\frac{p}{d^n}) = p \cdot (d^{-1})^n \in R_+$ since $p,d^{-1} \in R_+$.
\end{proof}

\begin{DefSub}
Let $d$ be a positive integer.  A preordered ring $R$ is said to be a \textbf{preordered algebra over the $d$-adic fractions}, or a \textbf{preordered} $\ZZ[\frac{1}{d}]$-\textbf{algebra}, if $R$ satisfies the equivalent conditions of \bref{thm:preord_dalg}.  Note that since $e:\ZZ \rightarrow R$ maps $\ZZ$ into the centre of $R$, it follows that $e^\sharp:\ZZ[\frac{1}{d}] \rightarrow R$ maps $\ZZ[\frac{1}{d}]$ into the centre of $R$.  For each $d$-adic fraction $q \in \ZZ[\frac{1}{d}]$ we write the element $e^\sharp(q)$ of $R$ simply as $q$.
\end{DefSub}

\begin{ExaSub}
If we define a preordered ring $R$ by taking the underlying ring of $R$ to be $\QQ$ but taking $R_+$ to be the subrig $\NN \subseteq R$, then $R$ is not a preordered $\ZZ[\frac{1}{2}]$-algebra in the above sense.
\end{ExaSub}

\begin{DefSub}
An element $u$ of a preordered commutative monoid $(M,+,0)$ is an \textbf{order unit} for $M$ if for every $x \in M$ there exists some $n \in \NN$ such that $x \lt nu$.  An element $u$ of a preordered ring $R$ is said to be an \textbf{order unit for the positive part} of $R$ if $u$ is an order unit for the additive monoid of $R_+$.
\end{DefSub}

\begin{ThmSub}\label{thm:suff_conds_for_aff_ext_prop}
Let $R$ be a preordered algebra over the $d$-adic fractions, for some integer $d > 1$, and suppose that $1$ is an order unit for the positive part of $R$.  Then $R$ has the affine extension property.
\end{ThmSub}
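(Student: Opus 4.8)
The plan is to verify condition 3 of Lemma~\bref{thm:aff_extn_prop} directly: given an arbitrary $R_+$-affine map $\varphi\colon R_+^n\to R_+$, I will show that $\varphi$ coincides on all of $R_+^n$ with the map $g\colon R_+^n\to R_+$ defined by $g(x)=\varphi(0)+\sum_{i=1}^n x_i(\varphi(b_i)-\varphi(0))$ --- that is, with the restriction to $R_+^n$ of the $R$-affine map $\Phi^R_{w^\varphi}$. The two hypotheses play complementary roles: since $R$ is a preordered $\ZZ[\frac{1}{d}]$-algebra, $d^{-1}\in R_+$, so one can ``divide coordinates by $d$'' while staying inside $R_+$; and the order-unit hypothesis bounds the coordinate sum of any point of $R_+^n$ above by a natural number, hence (as $d>1$) by a power $d^m$.

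First I would handle the region $U=\{x\in R_+^n\mid \sum_i x_i\le 1\}$. For $x\in U$ the row vector $[\,1-\sum_i x_i,\ x_1,\dots,x_n\,]$ has all entries in $R_+$ and sums to $1$, so it is a morphism in $\Mat_{R_+}^\aff$, and applying the corresponding $(1+n)$-ary $R_+$-affine operation to the tuple $(0,b_1,\dots,b_n)$ of elements of $R_+^n$ returns $(1-\sum_i x_i)\cdot 0+\sum_i x_i b_i=x$; since $\varphi$ preserves $R_+$-affine combinations, $\varphi(x)=(1-\sum_i x_i)\varphi(0)+\sum_i x_i\varphi(b_i)=g(x)$. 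Next I would record a recursion valid for \emph{every} $x\in R_+^n$: the equation $x=\frac{1}{d}(dx)+\frac{d-1}{d}\cdot 0$ exhibits $x$ as an $R_+$-affine combination (both coefficients lie in $R_+$, using $d^{-1}\in R_+$ and $d-1\in\NN$, and they sum to $1$), so $\varphi(x)=\frac{1}{d}\varphi(dx)+\frac{d-1}{d}\varphi(0)$, whence $\varphi(dx)=d\varphi(x)-(d-1)\varphi(0)$; a one-line computation gives the same identity $g(dx)=d\,g(x)-(d-1)\varphi(0)$ for $g$. Hence the set $\{x\in R_+^n\mid\varphi(x)=g(x)\}$ is stable under multiplication by $d$, and therefore contains $d^m U$ for every $m\in\NN$.

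To conclude, let $x\in R_+^n$. Applying the order-unit hypothesis to $\sum_i x_i\in R_+$ yields $N\in\NN$ with $\sum_i x_i\le N$, and since $d>1$ we may choose $m$ with $d^m\ge N$. Then $y:=d^{-m}x\in R_+^n$, and multiplying $\sum_i x_i\le N$ by the positive element $d^{-m}$ gives $\sum_i y_i\le d^{-m}N$, while multiplying $d^m\ge N$ by $d^{-m}$ gives $d^{-m}N\le 1$; by transitivity $\sum_i y_i\le 1$, so $y\in U$ and thus $\varphi(y)=g(y)$ by the first step. Since $x=d^m y$, applying the recursion $m$ times gives $\varphi(x)=g(x)$. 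As $n$ was arbitrary, $R$ has the affine extension property.

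The one genuinely new idea --- and the main obstacle --- is that the decomposition $x=(1-\sum_i x_i)\,0+\sum_i x_i b_i$ used for rings in \bref{thm:cmtnt_of_th_of_raff_sp} is unavailable here, because $1-\sum_i x_i$ need not lie in $R_+$; the scaling trick together with the order-unit hypothesis is exactly what reduces the general case to the region $U$, on which the decomposition is legitimate. (That $d>1$ is indispensable is already visible from \bref{exa:ints}, since $\ZZ[\frac{1}{1}]=\ZZ$ fails the property.) The remaining ingredients --- that $\varphi$ preserves $(1+n)$-ary $R_+$-affine combinations, which in the power $R_+^n$ are computed componentwise, and the elementary monotonicity of multiplication by elements of $R_+$ --- are routine and are used freely elsewhere in the paper.
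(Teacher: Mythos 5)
Your proof is correct, and it reaches the conclusion by a genuinely different route than the paper. The paper first reduces to the case $n=1$ (by precomposing $\varphi$ with the ray maps $r \mapsto rx$), and for $n=1$ it first establishes the identity on the image of $\NN$ in $R_+$ via the three-term affine relation $m+1 = \tfrac{1}{d}m + \tfrac{d-2}{d}(m+1) + \tfrac{1}{d}(m+2)$, which forces the successive differences $\varphi(m+1)-\varphi(m)$ to be constant; it then reaches an arbitrary $x \in R_+$ by writing $x$ as an $R_+$-affine combination of $0$ and a sufficiently large power $m=d^k$, using the order unit. You instead work directly in $R_+^n$: you observe that the barycentric decomposition $x = (1-\sum_i x_i)\cdot 0 + \sum_i x_i b_i$ is already legitimate on the region $U=\{x \mid \sum_i x_i \lt 1\}$, and you propagate the agreement $\varphi = \Phi^{R_+}_{w^\varphi}$ from $U$ to all of $R_+^n$ via the dilation identity $\varphi(dx)=d\varphi(x)-(d-1)\varphi(0)$, which both sides satisfy, together with the order-unit hypothesis to cover $R_+^n$ by the dilates $d^mU$. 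Your argument is more uniform (no reduction to $n=1$, no arithmetic-progression induction on $\NN$), while the paper's version isolates the single equation \eqref{eq:common_diff} where the $d$-adic hypothesis enters, which is what made the referee's generalization from $d=2$ to arbitrary $d>1$ transparent. All the small verifications in your write-up (that $\tfrac{1}{d}$ and $\tfrac{d-1}{d}$ lie in $R_+$ and are central, that multiplication by elements of $R_+$ is monotone, and that the order-unit bound can be taken to be a power of $d$) check out against the paper's definitions.
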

\begin{proof}
We need to show that if $\varphi:R^n_+ \rightarrow R_+$ is a left $R_+$-affine map then 
\begin{equation}\label{eq:needed_eqn}\varphi(x) = \varphi(0) + \sum_{i = 1}^n x_i(\varphi(b_i) - \varphi(0))\end{equation}
in $R$ for all $x = (x_1,...,x_n) \in R^n_+$, where $b_i \in R^n_+$ is the $i$-th standard basis vector.  Let us first treat the case where $n = 1$, so that $\varphi:R_+ \rightarrow R_+$.  Letting $\delta = \varphi(1) - \varphi(0) \in R$, we must show that $\varphi(x) = \varphi(0) + x\delta$ for all $x \in R_+$.  To this end, we shall first prove that for each $m \in \NN$,
\begin{equation}\label{eq:common_diff}\varphi(m + 2) - \varphi(m + 1) = \varphi(m + 1) - \varphi(m)\end{equation}
(with the notational convention of \bref{par:uniq_mor_from_z}).  Indeed, the equation
$$m + 1 \;=\; \frac{1}{d}m + \frac{d - 2}{d}(m + 1) + \frac{1}{d}(m + 2)$$
holds in $R$ and expresses $m + 1$ as a left $R_+$-affine combination of the elements \linebreak $m, m + 1, m + 2$ of $R_+$, so
$$\varphi(m + 1) = \frac{1}{d}\varphi(m) + \frac{d - 2}{d}\varphi(m + 1) + \frac{1}{d}\varphi(m + 2)\;.$$
Multiplying both sides of this equation by $d$, we readily compute that \eqref{eq:common_diff} holds.  By induction on $m \in \NN$ the common difference \eqref{eq:common_diff} is $\delta = \varphi(1) - \varphi(0)$, so by another induction on $m \in \NN$ we find that $\varphi(m) = \varphi(0) + m\delta$ in $R$.  Now for an arbitrary element $x \in R_+$, since 1 is an order unit for $R_+$ there is some $m \in \NN$ such that $x \lt m$ in $R$, and we can take $m$ to be a power of $d$ so that $m$ has an inverse $\frac{1}{m} \in R_+$ since $R$ is a preordered $\ZZ[\frac{1}{d}]$-algebra.  But then $\frac{1}{m}x \lt 1$ in $R$ and hence both $1 - \frac{1}{m}x$ and $\frac{1}{m}x$ lie in $R_+$, so we can express $x$ as a left $R_+$-affine combination 
$$x = (1 - \frac{1}{m}x) \cdot 0 + \frac{1}{m}x \cdot m$$
of the elements $0,m \in R_+$ (using the fact that $\frac{1}{m},m$ lie in the centre of $R$).  Hence since $\varphi$ is left $R_+$-affine we compute that
$$
\begin{array}{ccccc}
\varphi(x) & = & \left(1 - \frac{1}{m}x\right)\varphi(0) + \frac{1}{m}x\varphi(m) & = & \left(1 - \frac{1}{m}x\right)\varphi(0) + \frac{1}{m}x(\varphi(0) + m\delta)\\
           & = & \varphi(0) - \frac{1}{m}x\varphi(0) + \frac{1}{m}x\varphi(0) + \frac{1}{m}xm\delta & = & \varphi(0) + x\delta
\end{array}
$$
since $m,\frac{1}{m}$ lie in the centre of $R$.

Having thus established \eqref{eq:needed_eqn} in the case $n = 1$, we now treat the general case.  Given a left $R_+$-affine map $\varphi:R_+^n \rightarrow R_+$ and an element $x \in R_+^n$, note that the composite map
$$\varphi_x = \left(R_+ \xrightarrow{(-) \cdot x} R^n_+ \xrightarrow{\varphi} R_+\right)$$
is left $R_+$-affine and is given by $\varphi_x(r) = \varphi(rx)$, so by what we have established above we find that
\begin{equation}\label{eq:phi_on_sc_mults_of_x}
\begin{array}{lllll}
\varphi(rx) & = & \varphi_x(r) & = & \varphi_x(0) + r(\varphi_x(1) - \varphi_x(0))\\
            & = & \varphi(0x) + r(\varphi(1x) - \varphi(0x)) & = & \varphi(0) + r(\varphi(x) - \varphi(0))\;.
\end{array}
\end{equation}
for all $r \in R_+$.  Next let $\gamma = \sum_{i = 1}^n x_i$.  Since $\gamma$ lies in $R_+$ and $1$ is an order unit for $R_+$ there is some $m \in \NN$ such that $\gamma \lt m$ in $R$, and we can take $m$ to be a power of $d$ so that $m$ has an inverse $\frac{1}{m} \in R_+$.  Now $\frac{1}{m}\gamma \lt 1$, so $1 - \frac{1}{m}\gamma \gt 0$.  The elements $1 - \frac{1}{m}\gamma,\frac{1}{m}x_1,...,\frac{1}{m}x_n$ of $R_+$ sum to 1, and so we can now express $\frac{1}{m}x$ as a left $R_+$-affine combination
$$\frac{1}{m}x = (1 - \frac{1}{m}\gamma) \cdot 0 + \sum_{i = 1}^n\frac{1}{m}x_ib_i$$
of the elements $0,b_1,...,b_n$ of $R_+^n$.  Hence since $\varphi$ is left $R_+$-affine we find that
$$\varphi(\frac{1}{m}x) = (1 - \frac{1}{m}\gamma) \varphi(0) + \sum_{i = 1}^n\frac{1}{m}x_i\varphi(b_i)\;.$$
But by \eqref{eq:phi_on_sc_mults_of_x} we know that $\varphi(\frac{1}{m}x) = \varphi(0) + \frac{1}{m}(\varphi(x) - \varphi(0))$, so
$$\varphi(0) + \frac{1}{m}(\varphi(x) - \varphi(0)) = (1 - \frac{1}{m}\gamma) \varphi(0) + \sum_{i = 1}^n\frac{1}{m}x_i\varphi(b_i)\;.$$
Multiplying both sides by $m$,
$$m\varphi(0) + (\varphi(x) - \varphi(0)) = (m - \gamma)\varphi(0) + \sum_{i = 1}^n x_i\varphi(b_i)\;,$$
so
$$
\begin{array}{lllll}
\displaystyle{\varphi(x)} & = & \displaystyle{(1 - \gamma)\varphi(0) + \sum_{i = 1}^n x_i\varphi(b_i)} &  & \\
           & = & \displaystyle{\left(1 - \sum_{i = 1}^n x_i\right)\varphi(0) + \sum_{i = 1}^n x_i\varphi(b_i)} & = & \displaystyle{\varphi(0) + \sum_{i = 1}^n x_i(\varphi(b_i) - \varphi(0))}
\end{array}
$$
as needed.
\end{proof}

We now focus on the following class of preordered rings, given by a slight weakening of the notion of \textit{strongly archimedean} partially ordered ring from \cite{Kea}.

\begin{DefSub}\label{def:firmly_arch}
We say that a preordered ring $R$ is \textbf{firmly archimedean} if $R$ is archimedean and $1$ is an order unit for the positive part of $R$.
\end{DefSub}

\begin{RemSub}
Observe that a nonzero totally ordered ring $R$ is firmly archimedean if and only if $R$ is archimedean.  Indeed, in a nonzero archimedean totally ordered ring, $1$ is necessarily an order unit for $(R,+,0)$.
\end{RemSub}

\begin{ThmSub}\label{thm:suff_conds}
Let $R$ be a firmly archimedean preordered algebra over the $d$-adic fractions, for some integer $d > 1$.  Then the Lawvere theory of left $R$-convex spaces (i.e., left $R_+$-affine spaces) and the Lawvere theory of pointed right $R_+$-modules are mutual commutants in the full finitary theory of $R_+$ in $\Set$.
\end{ThmSub}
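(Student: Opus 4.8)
The plan is to obtain this as an immediate consequence of the characterization in \bref{thm:cvx_cmtnt_charn}, together with \bref{thm:taffr_as_cmtnt} and \bref{thm:suff_conds_for_aff_ext_prop}, so that essentially no new argument is required: all of the substantive work has already been carried out in those results. Recall that by \bref{thm:taffr_as_cmtnt}, applied to the rig $R_+$, we have unconditionally that $\Mat_{R_+}^\aff \cong (\Mat_{R_+^\op}^*)^\perp$ as Lawvere theories over $\Set_{R_+}$. Hence, by the definition of mutual commutants in \bref{def:sat_bal}, it remains only to establish the reverse half, namely that $(\Mat_{R_+}^\aff)^\perp \cong \Mat_{R_+^\op}^*$ over $\Set_{R_+}$; and by \bref{thm:cvx_cmtnt_charn} this holds precisely when $R$ is auto-archimedean and has the affine extension property. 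So the proof reduces to verifying these two conditions for a firmly archimedean preordered $\ZZ[\frac{1}{d}]$-algebra $R$.

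For the first condition, I would note that by \bref{def:firmly_arch} the ring $R$ is archimedean, and an archimedean preordered ring is auto-archimedean (by the remark to that effect following the definition of the auto-archimedean property); so $R$ is auto-archimedean. For the second condition, I would again use \bref{def:firmly_arch}: since $R$ is firmly archimedean, $1$ is an order unit for the positive part of $R$, and by hypothesis $R$ is a preordered algebra over the $d$-adic fractions for some integer $d > 1$ (equivalently, $d$ has an inverse lying in $R_+$, by \bref{thm:preord_dalg}). Therefore \bref{thm:suff_conds_for_aff_ext_prop} applies verbatim and shows that $R$ has the affine extension property.

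With both conditions in hand, condition~1 of \bref{thm:cvx_cmtnt_charn} is satisfied, so condition~2 holds, giving $(\Mat_{R_+}^\aff)^\perp \cong \Mat_{R_+^\op}^*$ over $\Set_{R_+}$; combined with $\Mat_{R_+}^\aff \cong (\Mat_{R_+^\op}^*)^\perp$ from \bref{thm:taffr_as_cmtnt}, we conclude that the theory $\Mat_{R_+}^\aff$ of left $R$-convex spaces and the theory $\Mat_{R_+^\op}^*$ of pointed right $R_+$-modules are mutual commutants in the full finitary theory $\Set_{R_+}$ of $R_+$ in $\Set$. The only point requiring a moment's care is bookkeeping: ``mutual commutants'' demands both inclusions, one of which is free of any archimedean hypothesis — it holds for every rig, via the affine core construction underlying \bref{thm:taffr_as_cmtnt} — while the other is exactly what the firmly archimedean and $d$-adic hypotheses deliver through \bref{thm:cvx_cmtnt_charn} and \bref{thm:suff_conds_for_aff_ext_prop}. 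There is therefore no genuine obstacle at this stage of the development.
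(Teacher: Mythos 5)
Your proposal is correct and follows exactly the paper's route: the paper's own proof is a one-line appeal to \bref{thm:suff_conds_for_aff_ext_prop}, \bref{thm:cvx_cmtnt_charn}, and \bref{thm:taffr_as_cmtnt}, and your write-up simply makes explicit how those three results combine (one half of "mutual commutants" unconditionally from \bref{thm:taffr_as_cmtnt} applied to the rig $R_+$, the other half from verifying the auto-archimedean and affine extension hypotheses of \bref{thm:cvx_cmtnt_charn}). No gaps.
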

\begin{proof}
This now follows from \bref{thm:suff_conds_for_aff_ext_prop}, \bref{thm:cvx_cmtnt_charn}, and \bref{thm:taffr_as_cmtnt}.
\end{proof}

\begin{ExaSub}\label{exa:firmly_arch_pralgs_over_d}
Let us fix an integer $d > 1$.
\begin{enumerate}
\item Any subring $R$ of $\RR$ containing the $d$-adic fractions is a firmly archimedean preordered $\ZZ[\frac{1}{d}]$-algebra.  In particular, \bref{thm:suff_conds} applies to both $R = \RR$ and $R = \ZZ[\frac{1}{d}]$.
\item Given a set $X$, the ring $R$ of all bounded real-valued functions on $X$ is a firmly ar\-chi\-me\-de\-an preordered $\ZZ[\frac{1}{d}]$-algebra under the pointwise order.
\item Let $R$ be any subring of the ring of all bounded real-valued functions on a given set $X$, and suppose that $R$ contains all constant functions with values in $\ZZ[\frac{1}{d}]$.  Then $R$ is a firmly archimedean preordered $\ZZ[\frac{1}{d}]$-algebra under the pointwise order.
\item Given a compact topological space, the ring $R = C(X)$ of all continuous real-valued functions on $X$ is a firmly archimedean preordered $\ZZ[\frac{1}{d}]$-algebra.
\end{enumerate}
\end{ExaSub}

\providecommand{\bysame}{\leavevmode\hbox to3em{\hrulefill}\thinspace}
\providecommand{\MR}{\relax\ifhmode\unskip\space\fi MR }
\providecommand{\MRhref}[2]{%
  \href{http://www.ams.org/mathscinet-getitem?mr=#1}{#2}
}
\providecommand{\href}[2]{#2}


\end{document}